\newtheorem{definition}{Definition}[section]
\newtheorem{theorem}[definition]{Theorem}
\newtheorem{lemma}[definition]{Lemma}
\newtheorem{proposition}[definition]{Proposition}
\newtheorem{remark}[definition]{Remark}
\newcommand{\nc}{\newcommand}
\nc{\qed}{\mbox{}\nolinebreak\hfill \rule{2mm}{2mm}} 
\nc{\weak}{\rightharpoonup}
\nc{\weakstar}{\stackrel{\ast}{\rightharpoonup}} 
\nc{\proof}{{\bf Proof: }} 
\renewcommand{\div}{{{\mathrm{div}}_x}\,}
\newcommand{\vrho}{\varrho}
\nc{\modular}[1]{{\stackrel{ #1}{\longrightarrow\,}}}
\def\bbbone{{\mathchoice {\rm 1\mskip-4mu l}
{\rm 1\mskip-4mu l} {\rm 1\mskip-4.5mu l} {\rm 1\mskip-5mu l}}}
\def\tens#1{\pmb{\mathsf{#1}}}
\def\vec#1{\boldsymbol{#1}}
\newcommand{\vu}		{\vec{u}}
\newcommand{\vr}		{\vrho}
\newcommand{\vrm}       {\vrho^{(1)}_{\veps,M}}
\newcommand{\vre}		{\vr_\varepsilon}
\newcommand{\vrez}	{\vr_{0,\varepsilon}}
\newcommand{\vret}		{\wtilde{\vr}_\ep}
\newcommand{\ue}		{\vec{u}_\varepsilon}
\newcommand{\uez}		{\vec{u}_{0,\varepsilon}}
\newcommand{\ep}		{\varepsilon}
\newcommand{\n}		{\vec{n}}
\newcommand{\vU}		{\vec{U}}
\newcommand{\ess} 	{{\rm{ess}}}
\newcommand{\res}		{{\rm{res}}}
\newcommand{\dx}		{\,{\rm d}x}
\newcommand{\dt}		{\, {\rm d}t}
\newcommand{\dxdt}	{\, {\rm d}x{\rm d}t}
\newcommand{\U}		{\vec{U}}
\def\bbbone{{\mathchoice {\rm 1\mskip-4mu l}
{\rm 1\mskip-4mu l} {\rm 1\mskip-4.5mu l} {\rm 1\mskip-5mu l}}}
\renewcommand{\bbbone}{\mathds{1}}
\newcommand{\tbf}{\textbf}
\newcommand{\tsl}{\textsl}
\newcommand{\mbb}{\mathbb}
\newcommand{\mc}{\mathcal}
\newcommand{\mf}{\mathfrak}
\newcommand{\veps}{\varepsilon}
\newcommand{\what}{\widehat}
\newcommand{\wtilde}{\widetilde}
\newcommand{\vphi}{\varphi}
\newcommand{\oline}{\overline}
\newcommand{\ra}{\rightarrow}
\newcommand{\g}{\gamma}
\newcommand{\s}{\sigma}
\newcommand{\de}{\delta}
\newcommand{\lan}{\langle}
\newcommand{\ran}{\rangle}
\newcommand{\e}{\vec{e}}
\newcommand{\R}{\mathbb{R}}
\newcommand{\N}{\mathbb{N}}
\newcommand{\Z}{\mathbb{Z}}
\newcommand{\T}{\mathbb{T}^1}
\newcommand{\TT}{\mathbb{T}}
\newcommand{\h}{\mathbb{H}}
\renewcommand{\div}{{\rm div}\,}
\newcommand{\curl}{{\rm curl}\,}
\newcommand{\divh}{{\rm div}_h}
\newcommand{\curlh}{{\rm curl}_h}
\newcommand{\Id}{{\rm Id}\,}
\newcommand{\Supp}{{\rm Supp}\,}
\def\d{\partial}
\def\div{{\rm div}\,}
\title{\LARGE \bf{On the influence of gravity 
in the dynamics of \\ geophysical flows
}}
\author{ \textsl{Daniele Del Santo}$\,^1\;$, $\;$\textsl{Francesco Fanelli}$\,^2\;$, $\;$\textsl{Gabriele Sbaiz}$\,^{1,2}\;$,
$\;$\textsl{Aneta Wr\'oblewska-Kami\'nska}$\,^{3}$ 
\vspace{.2cm} \\
\footnotesize{$\,^1\;$ \textsc{Universit\`a degli Studi di Trieste}, \textit{Dipartimento di Matematica e Geoscienze},} \\
\footnotesize{Via Valerio 12/1, 34127 Trieste, Italy} \vspace{0.2cm} \\
\footnotesize{$\,^2\;$ \textsc{Univ. Lyon, Universit\'e Claude Bernard Lyon 1}, CNRS UMR 5208, \textit{Institut Camille Jordan},} \\
\footnotesize{43 blvd. du 11 novembre 1918, F-69622 Villeurbanne cedex, France} \vspace{0.2cm} \\
\footnotesize{$\,^3\;$ \textsc{Institute of Mathematics of Polish Academy of Sciences},} \\ {\footnotesize ul.\'Sniadeckich 8, 00-656 Warszawa, Poland}
\vspace{.3cm} \\
\footnotesize{\ttfamily{delsanto@units.it}$\,,\quad$  \ttfamily{fanelli@math.univ-lyon1.fr}$\,,\quad$\ttfamily{gabriele.sbaiz@phd.units.it}$\,,\quad$
\ttfamily{a.wroblewska@impan.pl}
} \vspace{.1cm}
}
\date{\small \today}
\begin{document}
\maketitle

\abstract{In the present paper, we study a multiscale limit for the barotropic Navier-Stokes system with Coriolis and gravitational forces,
for vanishing values of the Mach, Rossby and Froude numbers ($\rm Ma$, $\rm Ro$ and $\rm Fr$, respectively).
The focus here is on the effects of gravity: albeit remaining in a low stratification regime ${\rm Ma}/{\rm Fr}\,\ra\,0$,
we consider scaling for the Froude number which go beyond the ``critical'' value $\rm Fr\,=\,\sqrt{\rm Ma}$.
The rigorous derivation of suitable limiting systems for the various choices of the scaling is shown by means of a compensated compactness argument.
Exploiting the precise structure of the gravitational force is the key to get the convergence.
}

\paragraph*{\small 2020 Mathematics Subject Classification:}{\footnotesize 35Q86 
(primary);
35B25, 
76U60, 
35B40, 
76M45 
(secondary).}

\paragraph*{\small Keywords:} {\footnotesize barotropic Navier-Stokes-Coriolis system; gravity; stratification effects; low Mach, Froude and Rossby numbers; multiscale limit.}

\section{Introduction}

In this paper we continue the investigation we began in \cite{DS-F-S-WK}, about multiscale analysis of mathematical models for geophysical flows.
Our focus here is on the effect of gravity in regimes of \emph{low stratification}, but which go beyond a choice of the scaling that, in light
of previous results, we call ``critical''.

In order to explain better all this, let us introduce some physics about the problem we are interested in, and give an overview of related studies.
We present in Section \ref{s:result} the precise system we will work on, and the statements of our main results.

\subsection{Some physical considerations}

By definition (see \tsl{e.g.} \cite{C-R}, \cite{Ped}), geophysical flows are flows whose dynamics is characterised by large time and space scales.
Typical examples are currents in the atmosphere and the ocean, but of course there are many other cases where such fluids occur out of the Earth, like flows on stars or other celestial bodies.

At those scales, the effects of the rotation of the ambient space (which in the previous examples is the Earth) are no more negligible, and the fluid motion undergoes the action of a strong Coriolis force.
A simplistic assumption, which is however often adopted in physical and mathematical studies, consists in restricting the attention to flows at mid-latitudes, \tsl{i.e.} flows which take place far enough from the poles and the equatorial zone. 
Thus, if we denote by $\vr\,\geq\,0$ the density of the fluid and by $\vu\,\in\,\R^3$ its velocity field, the Coriolis force may be represented in the following form:
\begin{equation} \label{def:Coriolis}
\mf C(\vr,\vu)\,:=\,\frac{1}{\rm Ro}\,\vec e_3\times\vr\,\vu\,,
\end{equation}
where $\vec e_3=(0,0,1)$, 
the symbol $\times$ denotes the classical external product of vectors in $\R^3$ and $\rm Ro>0$ is the so-called \emph{Rossby number},
a physical adimensional parameter linked to the speed of rotation of the Earth. 
In particular, the previous definition implies that the rotation is approximated to take place around the vertical axis, and its strength does not depend
on the latitude. We point out that, despite all these simplifications, the obtained model is already able to give a quite accurate description of several physically relevant phenomena occurring in the dynamics of geophysical fluids (see \tsl{e.g.} \cite{Ped}, \cite{C-D-G-G}).

In geophysical fluid dynamics, effects of the fast rotation are predominant; this translates into the fact that the value of $\rm Ro$
is very small.
As a matter of fact, the Rossby number $\rm Ro$ is defined as the ratio between the nonlinear acceleration to the Coriolis parameter term, namely
$$ {\rm Ro}:=\frac{U_{\rm ref}}{f_{\rm ref}\,L_{\rm ref}}\,, $$
where $U_{\rm ref},\, L_{\rm ref}$ and $f_{\rm ref}$ are respectively the horizontal velocity scale, the horizontal length scale and the reference Coriolis frequency (see \tsl{e.g.} \cite{K-C-D} for more details).
For instance, for a typical atmospheric value of $U_{\rm ref} \sim 10$ m/s, $f_{\rm ref}\sim 10^{-4} \text{ s}^{-1}$ and $L_{\rm ref}\sim 1000$ km, the Rossby number
turns out to be $0.1$; its value is even smaller for many flows in the oceans.
As established by the \emph{Taylor-Proudman theorem} in geophysics, the fast rotation imposes a certain rigidity/stability, as it undresses the motion of any vertical variation, and forces it to take place on planes orthogonal to the rotation axis.
Thus, the dynamics becomes purely two-dimensional and horizontal, and the fluid tends to move along vertical columns.

However, such an ideal configuration is hinder by another fundamental force acting at geophysical scales, the gravity, which works to restore vertical stratification of the density. The gravitational force may be represented by the term
\[
 \mc G(\vr)\,:=\,-\,\frac{1}{\rm Fr^2}\,\vr\,\vec e_3\,, 
\]
where $\rm Fr>0$ is the \emph{Froude number}, another physical adimensional parameter, which measures the importance of the stratification effects in the dynamics.
In geophysics (see again \cite{K-C-D} for details), $\rm Fr$ represents the square root of the ratio between inertia and gravity, namely
$$ {\rm Fr}:=\frac{U_{\rm ref}}{\sqrt{g\,L_{\rm ref}}}\,, $$
where $g$ is the acceleration of gravity.

As it happens for the Rossby number,
at large scales also the Froude parameter is typically very small. Thus, the competition between the stabilisation effect of the Coriolis force and the vertical stratification due to gravity, is translated in the model into the competition between the orders of magnitude of the two parameters $\rm Ro$ and $\rm Fr$.

Actually, it turns out that the gravity $\mc G$ acts in combination with pressure forces. Restricting from now on our attention to the case of compressible fluids, like currents in the atmosphere for instance, and neglecting for a while heat transfer processes, the pressure term arising in the mathematical model takes the form
\[
\mf P(\vr)\,:=\,\frac{1}{\rm Ma^2}\,\nabla p(\vr)\,,
\]
where $p$ is a known smooth function of the density (and, in the general case, of the temperature of the fluid) and $\rm Ma>0$ is the so-called \emph{Mach number}, a third fundamental adimensional parameter which sets the size of isentropic departures from incompressible flow: the more $\rm Ma$ is small, the more compressibility effects are low. Also the value of $\rm Ma$ is very small for geophysical flows, since it is defined as
$$ {\rm Ma}:=\frac{U_{\rm ref}}{c}\, $$
with $c$ being the sound speed (for instance, in the oceans the typical sound speed is $c\sim 1520$ m/s).

\medbreak
As it is customary in physical studies, because of the complexity of the model, one would like to derive reduced models for geophysical flows,
which however are able to retain most of the properties of the original system.
The problem is that the three terms $\mf C$, $\mc G$ and $\mf P$ enter into play in the model with a very large prefactor in front of them, owing to the
smallness of the values of $\rm Ro$, $\rm Fr$ and $\rm Ma$ respectively. 
For assessing their relative importance and their influence in the dynamics, one fixes a choice of their orders of magnitude. Actually (see \tsl{e.g.} the discussion in Section 1.4 of \cite{C-R}), there is some arbitrariness in doing so, depending on the specific properties of the physical
system and on the processes one wants to put the accent on.

In general, geophysical fluid dynamics is a multiscale process, meaning that Earth's rotation, gravity and pressure forces act, and interact, at different
scales in the system. In other words, $\rm Ro$, $\rm Fr$ and $\rm Ma$ have different orders of magnitude, and only for some specific choices,
all (or some) of them are in balance.

\subsection{Multiscale analysis: an overview of previous results}

At the mathematical level, in the last 30 years there has been a huge amount of works devoted to the rigorous justification, in various functional frameworks,
of the reduced models considered in geophysics. Studies have been carried out in various contexts: for instance, focusing only on the effects of the low Mach number, or on its interplay with a low Froude number regime.

Reviewing the whole literature about this subject goes far beyond the scopes of this introduction, therefore we make the choice to report only on works which
deal with the presence of the Coriolis force \eqref{def:Coriolis}.
We also decide to leave aside from the discussion the case of incompressible models, because (owing to the rigidity imposed by the divergence-free constraint
on the velocity field of the fluid) less pertinent for multiscale analysis. We refer to book \cite{C-D-G-G} and the references therein for a panorama of
known results for incompressible homogeneous fluids, even though more recent developments have been made (see \tsl{e.g.} \cite{Scrobo} for a case where stratification is considered).
Notice that there are also a few recent works \cite{Fan-G}, \cite{C-F}, \cite{Sbaiz}, dealing with incompressible non-homogeneous fluids, but results in
that direction are only partial and the general picture still remains poorly understood at present.

The framework of compressible fluid models, instead, provides a much richer setting for the multiscale analysis of geophysical flows.
In what we are going to say, we make the choice of focusing on works which deal with viscous flows and which perform the asymptotic study for general
ill-prepared initial data. However, the literature about the subject is more ample than that.

First results in that direction were presented in \cite{F-G-N}, \cite{F-G-GV-N} for the barotropic Navier-Stokes system. There, the authors investigated the
combined effect of a strong Coriolis force (low Rossby number limit) and of weak compressibility of the fluid (low Mach number limit),
under the scaling
\begin{equation} \label{eq:scale}
{\rm Ma}\,=\,\veps^m\qquad\mbox{ and }\qquad {\rm Ro}\,=\,\veps\,,\qquad\qquad \mbox{ with }\quad m\geq1\,,
\end{equation}
where $\veps\in\,]0,1]$ is a small parameter, which one wants to let go to $0$ in order to derive an asymptotic model.
Notice that in \cite{F-G-GV-N} the effects due to the centrifugal force were considered as well, but this imposed the severe restriction $m>10$.
In the case $m=1$ in \eqref{eq:scale}, the system presents an isotropic scaling, since the Rossby and Mach numbers act at the same order of magnitude and they keep in balance in the limit process. This balance takes the name of quasi-geostrophic balance, and the limit system
is identified as the so-called \emph{quasi-geostrophic equation} for the stream function of the target velocity field.
When $m>1$, instead, the pressure and Coriolis forces act at different scales, the former one having a predominant effect on the dynamics of the fluid. At the mathematical level, the anisotropy of scaling generates some complications in the analysis; in \cite{F-G-GV-N} this issue was handled by the use of dispersive
estimates, which allowed to show convergence to a $2$-D incompressible Navier-Stokes system.

We refer to \cite{F_MA} for a similar study in the context of capillary models. There, the choice $m=1$ was made, but the anisotropy was given by the scaling
fixed for the internal forces term (the so-called Korteweg stress tensor). In addition, we refer to \cite{F_2019} for the case of large Mach numbers,
namely for the case when $0\leq m<1$ in \eqref{eq:scale}. Since, in that instance, the pressure gradient is not strong enough to compensate the Coriolis force,
in order to find some interesting dynamics in the limit one has to introduce a penalisation of the bulk viscosity coefficient.

In \cite{F-N_AMPA}, \cite{F-N_CPDE} the effects of gravity were added, under the scaling
\begin{equation} \label{eq:scale-G}
{\rm Fr}\,=\,\veps^n\,,\qquad\qquad\mbox{ with }\quad 1\,\leq\,n\,<\,\frac{m}{2}\,.
\end{equation}
In particular, in those works one had $m>2$. As before, a planar incompressible Navier-Stokes system was identified as the limiting system, but, as already
mentioned, the anisotropy of scaling created several difficulties in the analysis. We refer to \cite{K-M-N} and \cite{K-N} for related studies in the
context of the full Navier-Stokes-Fourier system, under the same choices of the scaling (notice that, in \cite{K-N}, the case $m=1$ was considered, but the gravitational force was not penalised at all).
The asymptotic results of \cite{F-N_AMPA}, \cite{F-N_CPDE}, \cite{K-M-N} and \cite{K-N} are all based on a fine combination of the relative entropy/relative energy method with dispersive estimates derived from oscillatory integrals, and a strong argument which allows to handle the ill-preparation of the data (typically, the use of relative energy estimates requires to consider well-prepared initial data).
In all those works, a vanishing viscosity regime was also considered.

In our recent work \cite{DS-F-S-WK}, devoted to the full Navier-Stokes-Fourier system in presence of stratification, we were able to improve the choice of the scaling \eqref{eq:scale-G} and take the endpoint case $n\,=\,m/2$, with $m\geq 1$ as in \eqref{eq:scale}. In passing, we mention that also effects of
the centrifugal force were considered in \cite{DS-F-S-WK}, but this imposed the additional constraint $m\geq 2$ on the order of the Mach number
(which, besides, refined the restriction in \cite{F-G-GV-N}).
The improvement on the orders of the scaling was possible, essentially due to a different technique employed for proving convergence, based on \emph{compensated compactness} arguments.
We refer to \cite{G-SR_2006} for the first implementation of that method in the context of fast rotating fluids, to \cite{F-G-GV-N}, \cite{F_JMFM},
\cite{F_2019} for other applications in the case of non-homogeneous flows. In particular, the convergence is not quantitative at all, but only qualitative. This technique is purely based on the algebraic structure of the system, which allows to find smallness (and vanishing to the limit) of suitable non-linear quantities, and fundamental compactness properties for other quantities (linked to the vorticity of the fluid and to the variations of the density);
such compactness properties were already put in evidence in \cite{Fan-G} (see also \cite{C-F}) in the context of non-homogeneous incompressible fluids in fast
rotation. All these features were enough to pass to the limit in the primitive system, and derive the limiting dynamics: a $2$-D incompressible Navier-Stokes system when $m>1$, a quasi-geostrophic equation for the stream function of the limit velocity when $m=1$.

An important point of the study performed in \cite{DS-F-S-WK} is that the scaling $n=m/2$ allowed to deduce some stratification effect in the limit. More precisely, although the limit dynamics was horizontal and two-dimensional, as dictated by the Taylor-Proudman theorem, stratification appeared in the functions representing
departures of the density and temperature from the respective equilibria. On the contrary, in previous works like \cite{F-N_AMPA}, \cite{F-N_CPDE}, \cite{K-M-N}, \cite{K-N}, based on the scaling \eqref{eq:scale-G}, stratification effects were completely absent. In this sense, we call
the endpoint case $n=m/2$ ``critical''.

To conclude this part, we mention that all the results quoted so far concern various regimes of \emph{low stratification}, meaning that,
according to the scaling in \eqref{eq:scale}, \eqref{eq:scale-G}, one has
\[
 \frac{\rm Ma}{\rm Fr}\,\longrightarrow\,0\qquad\qquad\mbox{ when }\qquad \veps\,\ra\,0^+\,.
\]
The \emph{strong stratification} regime, namely when the ratio ${\rm Ma}/{\rm Fr}$ is of order $O(1)$, is particularly delicate for fast rotating fluids.
This is in stark contrast with the results available about the derivation of the anelastic approximation, where rotation is neglected:
we refer \tsl{e.g.} to \cite{Masm}, \cite{BGL}, \cite{F-K-N-Z} and, more recently, \cite{F-Z} (see also \cite{F-N} and references therein for a
more detailed account of previous works). The reason for that has to be ascribed exactly to the competition between vertical stratification (due to gravity) and horizontal stability (which the Coriolis force tends to impose): in the strong stratification regime, vertical oscillations of the solution
(seem to) persist in the limit, and the available techniques do not allow at present to deal with this problem in its full generality.
Nonetheless, partial results have been obtained in the case of well-prepared initial data, by means of a relative entropy method: we refer to \cite{F-L-N}
for the first result, where the mean motion is derived, and to \cite{B-F-P} for an analysis of Ekman boundary layers in that framework.

\subsection{A short overview of the contents of the paper}

In the present work, we continue our investigation from \cite{DS-F-S-WK}, devoted to the multiscale analysis of systems for geophysical fluids and the derivation
of reduced models.

For clarity of exposition, we neglect here heat transfer processes in the fluid, and focus on the classical barotropic Navier-Stokes system
as the primitive system; the more general case of the Navier-Stokes-Fourier system can be handled at the price of some additional technicalities (as done in \cite{DS-F-S-WK}). Also, we simplify the model by neglecting effects due to the centrifugal force. On the one hand, this choice is not dramatic from the physical viewpoint (see the discussion in \cite{C-R}, for instance); on the other hand, we could include the presence of the centrifugal force, after imposing
some additional restrictions on the order of magnitude of the Mach number. We refer to Section \ref{s:result} below for the presentation of the precise equations
we are going to consider in this paper.

We work in the context of global in time \emph{finite energy} weak solutions to the barotropic Navier-Stokes system with Coriolis force, which provides
a good setting for studying singular limits for that system. We consider the general case of \emph{ill-prepared} initial data.

Our goal here is to go beyond the ``critical'' choice ${\rm Fr}\,=\,\sqrt{\rm Ma}$ performed in \cite{DS-F-S-WK}, and investigate
other regimes where the stratification has an even more important effect.
More precisely, we fix the following choice for the parameters $m$ and $n$ appearing in \eqref{eq:scale} and \eqref{eq:scale-G}: we assume that
\begin{equation} \label{eq:scale-our}
\mbox{ either }\qquad m\,>\,1\quad\mbox{ and }\quad m\,<\,2\,n\,\leq\,m+1\,,\qquad\qquad\mbox{ or }\qquad
m\,=\,1\quad\mbox{ and }\quad \frac{1}{2}\,<\,n\,<\,1\,.
\end{equation}
The restriction $n<1$ when $m=1$ is imposed in order to avoid a strong stratification regime: as already mentioned before, it is not clear
at present how to deal with this case for general ill-prepared initial data, as all the available techniques seem to break down in that case.
On the other hand, the restriction $2\,n\leq m+1$ (for $m>1$) looks to be of technical nature. However, it comes out naturally
in at least two points of our analysis, and it is not clear to us if, and how, it is possible to bypass it and consider the remaining range of values
$(m+1)/2\,<\,n\,<\,m$. 
Let us point out that, in our considerations, the relation $n<m$ holds always true,
so we will always work in a low stratification regime.

At the qualitative level, our main results will be quite similar to the ones presented in \cite{DS-F-S-WK}, in particular the limit dynamics will be the same
(after distinguishing the two cases $m>1$ and $m= 1$). We refer again to Section \ref{s:result} for the precise statements.
In this paper, the main point we put the accent on is how using in a fine way not only the structure of the system, but also the precise structure of each term in order to pass to the limit. To be more precise, the fact of considering
values of $n$ going above the threshold $2n=m$ is made possible thanks to special algebraic cancellations involving the gravity term in the system of wave equations.
Such cancellations owe very much to the peculiar form of the gravitational force, which depends on the vertical variable only, and they do not appear, in general, if one wants to consider the action of different forces on the system. As one may easily guess, the case $2n=m+1$ is more involved: indeed, this choice
of the scaling implies the presence of an additional bilinear term of order $O(1)$ in the computations; in turn, this term might not vanish in the limit, differently to what happens in the case $2n<m+1$. In order to see that this does not occur, and that this term indeed disappears in the limit process, one has to use more thoroughly the structure of the system to control the oscillations.

\medbreak
Before moving on, let us give an outline of the paper.
In Section \ref{s:result} we collect our assumptions and we state our main results.
In Section \ref{s:energy} we show the main consequences of the finite energy condition on the family of weak solutions we are going to consider.
Namely, we derive uniform bounds in suitable norms, which allow us to extract weak-limit points, and we explore the constraints those limit points have to satisfy. In Sections \ref{s:proof} and \ref{s:proof-1}, we complete the proof of our main results, showing convergence in the weak formulation of the equations
in the cases $m>1$ and $m=1$, respectively, \tsl{via} a compensated compactness argument.
We conclude the paper with Appendix \ref{app:LP}, where we present some tools from Littlewood-Paley decomposition,
which have been needed in our analysis.

\paragraph*{Some notation and conventions.} 

Let $B\subset\R^n$. The symbol $\bbbone_B$ denotes the characteristic function of $B$.
The notation $C_c^\infty (B)$ stands for the space of $C^\infty$ functions on $\R^n$ and having compact support in $B$. The dual space $\mc D^{\prime}(B)$ is the space of
distributions on $B$. 

Given $p\in[1,+\infty]$, by $L^p(B)$ we mean the classical space of Lebesgue measurable functions $g$, where $|g|^p$ is integrable over $B$ (with the usual modifications for the case $p=+\infty$).
Sometimes, given $T>0$ and $(p,q)\in[1,+\infty]^2$, we use the symbol $L_T^p(L^q)$ to denote the space $L^p\big(0,T;L^q(B)\big)$.
Given $k \geq 0$, we denote by $W^{k,p}(B)$ the Sobolev space of functions which belongs to $L^p(B)$ together with all their derivatives up to order $k$. When $p=2$, we set $W^{k,2}(B)=H^k(B)$.
For the sake of simplicity, we will often omit from the notation the set $B$, that we will explicitly point out if needed.

In the whole paper, the symbols $c$ and $C$ will denote generic multiplicative constants, which may change from line to line, and which do not depend on the small parameter $\veps$.
Sometimes, we will explicitly point out the quantities on which these constants depend, by putting them inside brackets.

Let $\big(f_\veps\big)_{0<\veps\leq1}$ be a sequence of functions in a normed space $Y$. If this sequence is bounded in $Y$,  we use the notation $\big(f_\veps\big)_{\veps} \subset Y$.
 
\medbreak
Next, let us introduce some notation specific to fluids in fast rotation.

If $B$ is a domain in $\R^3$, we decompose $x\in B$
into $x=(x^h,x^3)$, with $x^h\in\R^2$ denoting its horizontal component. Analogously,
for a vector-field $v=(v^1,v^2,v^3)\in\R^3$, we set $v^h=(v^1,v^2)$ and we define the differential operators
$\nabla_h$ and $\div_{\!h}$ as the usual operators, but acting just with respect to $x^h$.
In addition, we define the operator $\nabla^\perp_h\,:=\,\bigl(-\d_2\,,\,\d_1\bigr)$.
Finally, the symbol $\h$ denotes the Helmholtz projector onto the space of solenoidal vector fields in $B$, 
while $\h_h$ denotes the Helmholtz projection on $\R^2$.
Observe that, in the sense of Fourier multipliers, one has $\h_h\vec f\,=\,-\nabla_h^\perp(-\Delta_h)^{-1}\curlh\vec f$.

Moreover, since we will deal with a periodic problem in the $x^{3}$-variable, we also introduce the following decomposition: for a vector-field $X$, we write
\begin{equation} \label{eq:decoscil}
X(x)=\langle X\rangle (x^{h})+\widetilde{X}(x)\quad\qquad
 \text{ with }\quad \langle X\rangle(x^{h})\,:=\,\frac{1}{\left|\T\right|}\int_{\T}X(x^{h},x^{3})\, dx^{3}\,,
\end{equation}
where $\mbb{T}^1\,:=\,[-1,1]/\sim$ is the one-dimensional flat torus (here $\sim$ denotes the equivalence relation which identifies $-1$ and $1$)
and $\left|\T\right|$ denotes its Lebesgue measure.
Notice that $\widetilde{X}$ has zero vertical average, and therefore we can write $\widetilde{X}(x)=\d_{3}\widetilde{Z}(x)$ with $\widetilde{Z}$ having zero vertical average as well.

\subsection*{Acknowledgements}
{\small 
The work of the second and third authors has been partially supported by the project CRISIS (ANR-20-CE40-0020-01), operated by the French National Research Agency (ANR). The last author is supported by (Polish) National Center of Science grant 2020/38/E/ST1/00469.

The first and the third authors are members of the Italian Institute for Advanced Mathematics (INdAM) group. 
}

\section{Setting of the problem and main results} \label{s:result}

In this section, we introduce the primitive system and formulate our working hypotheses (see Section \ref{ss:FormProb}), then we state our main results
(in Section \ref{ss:results}).

 \subsection{The primitive system} \label{ss:FormProb}

As already said in the introduction, in this paper we assumed that the motion of the fluid is described by a rescaled version of the
barotropic Navier-Stokes system with Coriolis and gravitational forces.

Thus, given a small parameter $\veps\in\,]0,1]$, the system reads as follows:
\begin{align}
&	\partial_t \vre + \div (\vre\ue)=0 \label{ceq}\tag{NSC$_{\ep}^1$} \\
&	\partial_t (\vre\ue)+ \div(\vre\ue\otimes\ue) + \frac{1}{\ep}\,\e_3 \times \vre\ue +    \frac{1}{\ep^{2m}} \nabla_x p(\vre) 
	=\div \mbb{S}(\nabla_x\ue)  + \frac{\vre}{\ep^{2n}} \nabla_x G\, ,
	\label{meq}\tag{NSC$_{\ep}^2$} 
	\end{align}
where we recall that $m$ and $n$ are taken according to \eqref{eq:scale-our}.
The unknowns in the previous equations are the density $\vre=\vre(t,x)\geq0$ of the fluid and its velocity field $\ue=\ue(t,x)\in\R^3$, where $t\in\R_+$ and $x\in \Omega:=\R^2 \times\; ]0,1[$.
The viscous stress tensor in \eqref{meq} is given by Newton's rheological law
	\begin{equation}\label{S}
	\mbb{S}(\nabla_x \ue) = \mu\left( \nabla_x\ue + \nabla_x^T \ue  - \frac{2}{3}\div \ue \tens{Id} \right)
	+ \eta\, \div\ue \tens{Id}\,,
	\end{equation}
where $\mu>0$ is the shear viscosity and $\eta\geq 0$ represents the bulk viscosity. The term $\e_3\times\vrho_\veps\vec u_\veps$ takes into account the (strong) Coriolis force acting on the fluid.
As for the gravitational force, it is physically relevant to assume that 
	\begin{equation}\label{assG}
	 G(x)= -x^3\,.
	\end{equation}
The precise expression of $G$ will be useful in some computations below, 
although some generalisations are certainly possible.

	
The system is supplemented  with \emph{complete slip boundary conditions}, namely 
	\begin{align}
	\big(\ue \cdot \n\big) _{|\partial \Omega} = 0
	\quad &\mbox{ and } \quad
	\bigl([ \mbb{S} (\nabla_x \ue) \n ] \times \n\bigr)_{|\d\Omega} = 0\,,  \label{bc1-2}
	\end{align}
where $\vec{n}$ denotes the outer normal to the boundary $\d\Omega\,=\,\{x_3=0\}\cup\{x_3=1\}$.
Notice that this is a true simplification, because it avoid complications due to the presence of Ekman boundary layers, when passing to the limit
$\veps\ra0^+$.

\begin{remark} \label{r:period-bc}
As is well-known (see \tsl{e.g.} \cite{Ebin}), equations \eqref{ceq}--\eqref{meq}, supplemented by the complete slip boundary boundary conditions
from \eqref{bc1-2},
can be recasted as a periodic problem with respect to the vertical variable, in the new domain
$$
\Omega\,=\,\R^2\,\times\,\mbb{T}^1\,,\qquad\qquad\mbox{ with }\qquad\mbb{T}^1\,:=\,[-1,1]/\sim\,,
$$
where $\sim$ denotes the equivalence relation which identifies $-1$ and $1$. Indeed, the equations are invariant if we extend
$\rho$ and $u^h$ as even functions with respect to $x^3$, and $u^3$ as an odd function.

In what follows, we will always assume that such modifications have been performed on the initial data, and
that the respective solutions keep the same symmetry properties.
\end{remark}

Now we need to  impose structural restrictions on the pressure function $p$. We assume that
	\begin{equation}\label{pp1}
	p\in C^1 [0,\infty)\cap C^2(0,\infty),\qquad p(0)=0,\qquad p'(\vrho )>0\quad \mbox{ for all }\,\vrho\geq 0\, .
	\end{equation}
Additionally to \eqref{pp1}, we require that 
	\begin{equation}\label{pp2}
\mbox{ exists }\;\g\,>\,\frac{3}{2}\quad\mbox{ such that }\qquad
\lim\limits_{\vrho \to +\infty} \frac{p^\prime(\vrho)}{\vrho^{\gamma -1}} = p_\infty >0\, .
	\end{equation}
Without loss of generality, we can suppose that $p$ has been renormalised so that $p^\prime (1)=1$.  


\subsubsection{Equilibrium states} \label{sss:equilibrium}

Next, we focus our attention on the so-called \emph{equilibrium states}. For each value of $\veps\in\,]0,1]$ fixed, the equilibria of system \eqref{ceq}--\eqref{meq} consist of static densities $\vret$ satisfying
	\begin{equation}\label{prF}
\nabla_x p(\vret) = \ep^{2(m-n)} \vret \nabla_x G  \qquad \mbox{ in }\; \Omega\,.
	\end{equation}	

%
%

Equation \eqref{prF} identifies $\wtilde{\vrho}_\veps$ up to an additive constant: taking the target density to be $1$, we get
\begin{equation} \label{eq:target-rho}
  H^\prime(\vret)=\, \ep^{2(m-n)} G + H^\prime (1)\,,\qquad\qquad \mbox{ where }\qquad 
H(\vrho) = \vrho \int_1^{\vrho} \frac{ p(z)}{z^2} {\rm d}z\,.
\end{equation}
Notice that relation \eqref{eq:target-rho} implies that 
\begin{equation*}
H^{\prime \prime}(\vrho)=\frac{p^\prime (\vrho)}{\vrho} \quad \text{ and }\quad H^{\prime \prime}(1)=1\, .
\end{equation*}

Therefore, we infer that, whenever $m\geq1$ and $m>n$ as in the present paper, for any $x\in\Omega$ one has $\wtilde{\vrho}_\veps(x)\longrightarrow 1$ in the limit $\veps\ra0^+$.
More precisely, the next statement collects all the necessary properties of the static states. It corresponds to Lemma 2.3 and Proposition 2.5 of
\cite{DS-F-S-WK}. 
\begin{proposition} \label{p:target-rho_bound}
Let the  gravitational force $G$ be given by \eqref{assG}.
Let $\bigl(\wtilde{\vrho}_\veps\bigr)_{0<\veps\leq1}$ be a family of static solutions to equation \eqref{prF}
in $\Omega = \R^2\times\, ]0,1[$.

Then, there exist an $\veps_0>0$ and a $0<\rho_*<1$ such that $\wtilde{\vrho}_\veps\geq\rho_*$ for all $\veps\in\,]0,\veps_0]$
and all $x\in\Omega$.
In addition, for any $\veps\in\,]0,\veps_0]$, one has:
\begin{equation*}
\left|\wtilde{\vrho}_\veps(x)\,-\,1\right|\,\leq\,C\,\veps^{2(m-n)}\, ,
\end{equation*}
for a constant $C>0$ which is uniform in $x\in\Omega$ and in $\veps\in\,]0,1]$.
\end{proposition}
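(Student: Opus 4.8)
The plan is to work directly from the defining relation \eqref{eq:target-rho}, namely $H'(\wtilde\vrho_\veps) = \veps^{2(m-n)} G + H'(1)$, together with the specific form $G(x) = -x^3$. First I would record the structural facts about $H$ that follow from \eqref{pp1}--\eqref{pp2}: the function $H'$ is continuous on $[0,\infty)$, strictly increasing on $(0,\infty)$ since $H''(\vrho) = p'(\vrho)/\vrho > 0$, and we have normalised so that $H'(1) = H(1) = 0$ (or at least $H''(1)=1$); moreover, by \eqref{pp2}, $H'(\vrho)\to+\infty$ as $\vrho\to+\infty$ and $H'(\vrho)\to H'(0^+)$, which is finite and negative (since $H'$ is increasing and $H'(1) > H'(0^+)$). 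Hence $H'$ is a strictly increasing bijection from $(0,\infty)$ onto $(H'(0^+), +\infty)$, with a $C^1$ inverse $(H')^{-1}$ on the interior.

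Next, since $G(x) = -x^3$ with $x^3 \in [0,1]$ (or $x^3\in\mathbb T^1$ after the symmetrisation, but in any case $|G|\le 1$), the right-hand side $\veps^{2(m-n)} G(x) + H'(1)$ ranges, as $\veps$ and $x$ vary with $\veps$ small, within a small neighbourhood of $H'(1)$: precisely, $\big|\veps^{2(m-n)}G(x) + H'(1) - H'(1)\big| = \veps^{2(m-n)}|x^3| \le \veps^{2(m-n)}$. Because $m > n$ in our regime, $\veps^{2(m-n)}\to 0$, so for $\veps$ small enough this value lies in a fixed compact subinterval of the interior of the range of $H'$ on which $(H')^{-1}$ is well-defined and Lipschitz. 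This is what produces the threshold $\veps_0$ and lets us define $\wtilde\vrho_\veps(x) = (H')^{-1}\!\big(\veps^{2(m-n)}G(x) + H'(1)\big)$ unambiguously; then I would apply the mean value theorem (or the Lipschitz bound for $(H')^{-1}$, with Lipschitz constant $1/\min H'' $ over that compact interval, which is finite and positive by continuity of $p'(\vrho)/\vrho$ and positivity of $p'$) to get
\[
\big|\wtilde\vrho_\veps(x) - 1\big| \;=\; \big|(H')^{-1}(\veps^{2(m-n)}G(x)+H'(1)) - (H')^{-1}(H'(1))\big| \;\le\; C\,\veps^{2(m-n)}|x^3| \;\le\; C\,\veps^{2(m-n)},
\]
with $C$ depending only on $p$ through the lower bound for $H''$ near $1$, hence uniform in $x$ and $\veps$. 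The lower bound $\wtilde\vrho_\veps \ge \rho_* > 0$ follows immediately: since $\wtilde\vrho_\veps(x) \ge 1 - C\veps^{2(m-n)}$, choosing $\veps_0$ (possibly smaller) so that $C\veps_0^{2(m-n)} < 1/2$ gives $\rho_* := 1/2$, or more sharply any fixed $\rho_* \in (0,1)$ with $\rho_* < 1 - C\veps_0^{2(m-n)}$.

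The only genuinely delicate point — and the one I would isolate as the main obstacle — is the existence and global structure of the inverse of $H'$: one must check that the target value $H'(1) + \veps^{2(m-n)}G$ genuinely stays inside the range of $H'$ (this uses $G$ bounded and $m>n$, and is the reason the hypothesis $m>n$ is invoked), and that $H'$ does not degenerate, i.e. $H''$ stays bounded below by a positive constant on the relevant compact range of densities — which is where \eqref{pp1} (namely $p'(\vrho)>0$ for all $\vrho \ge 0$, together with continuity) is used to rule out $H'' = p'/\vrho$ vanishing. Once these qualitative facts are in place, the quantitative estimate is a one-line mean value theorem argument. Since the statement is quoted from Lemma 2.3 and Proposition 2.5 of \cite{DS-F-S-WK}, I would in practice simply refer to that reference for the details, but the self-contained argument above is exactly the content.
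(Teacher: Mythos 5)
Your argument is correct and is exactly the intended one: the paper itself gives no proof of this proposition (it defers to Lemma 2.3 and Proposition 2.5 of \cite{DS-F-S-WK}), and the argument there is precisely the inversion of the strictly increasing function $H'$ (with $H''=p'/\vrho>0$) combined with the bounds $|G|\leq 1$ and $\veps^{2(m-n)}\to 0$, followed by the mean value theorem on a compact neighbourhood of $\vrho=1$. Two harmless slips: $H'(0^+)$ need not be finite (if $p'(0)>0$ then $\int_0^1 p(z)z^{-2}\,{\rm d}z$ diverges, so $H'(0^+)=-\infty$), and the normalisation $p'(1)=1$ gives $H''(1)=1$ but not $H'(1)=p(1)=0$; neither fact is used, since all you need is that the range of $H'$ is an open interval containing $H'(1)$ and that $(H')^{-1}(H'(1))=1$.
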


Without loss of any generality, we can assume that $\veps_0=1$ in Proposition \ref{p:target-rho_bound}.

\medbreak
In light of this analysis, it is natural to try to solve system \eqref{ceq}--\eqref{meq} in $\Omega$, supplemented with the \emph{far field conditions}
\begin{equation} \label{ff}
\varrho_{\varepsilon}\rightarrow \vret \qquad \mbox{ and } \qquad \ue \rightarrow 0 \qquad\qquad \text{ as }\quad |x|\rightarrow +\infty \, .
\end{equation}


\subsubsection{Initial data and finite energy weak solutions} \label{sss:data-weak}

In view of the boundary conditions \eqref{ff} ``at infinity'', we assume that the initial data are close (in a suitable sense) to the equilibrium states $\vret$ that we have just identified.
Namely, we consider initial densities of the following form:
	\begin{equation}\label{in_vr}
	\vrez = \vret + \ep^m \vrez^{(1)} \, .
	\end{equation}
For later use, let us introduce also the following decomposition of the initial densities: 
\begin{equation} \label{eq:in-dens_dec}
\vrho_{0,\veps}\,=\,1\,+\,\veps^{2(m-n)}\,R_{0,\veps}\qquad\qquad\mbox{ with }\qquad
R_{0,\veps}\,=\,\wtilde r_\veps\,+\,\veps^{2n-m}\, \vrho_{0,\veps}^{(1)}\,,\qquad \wtilde r_\veps\,:=\,\frac{\wtilde\vrho_\veps-1}{\veps^{2(m-n)}}\,.
\end{equation}
Notice that the $\wtilde r_\veps$'s are in fact data of the system, since they only depend on $p$ and $G$.

We suppose the density perturbations $\vrez^{(1)}$ to be measurable functions and satisfy the control
	\begin{align}
\sup_{\veps\in\,]0,1]}\left\|  \vrez^{(1)} \right\|_{(L^2\cap L^\infty)(\Omega)}\,\leq \,c\,,\label{hyp:ill_data}
	\end{align}
together with the ``mean-free condition''
$$
\int_{\Omega}  \vrez^{(1)} \dx = 0\,.
$$
As for the initial velocity fields, we assume the following uniform bound:
\begin{equation} \label{hyp:ill-vel}
 	\sup_{\veps\in\,]0,1]}\left\| \sqrt{\wtilde\vrho_\veps} \vec{u}_{0,\ep} \right\|_{L^2(\Omega)}\,  \leq\, c\,.
\end{equation}

\begin{remark} \label{r:ill_data}
In view of Proposition \ref{p:target-rho_bound}, the condition in \eqref{hyp:ill-vel} immediately implies that
$$
\sup_{\veps\in\,]0,1]}\,\left\| \vec{u}_{0,\ep}  \right\|_{L^2(\Omega)}\,\leq\,c\,.
$$
\end{remark}


Thanks to the previous uniform estimates, up to extraction, we can identify the limit points
\begin{align} 
\vrho^{(1)}_0\,:=\,\lim_{\veps\ra0}\vrho^{(1)}_{0,\veps}\qquad\text{ weakly-$\ast$ in }&\qquad L^\infty (\Omega) \cap L^2(\Omega)\label{conv:in_data_vrho}\\
\vec{u}_0\,:=\,\lim_{\veps\ra0}\vec{u}_{0,\veps}\qquad \text{ weakly in }&\qquad L^2(\Omega)\label{conv:in_data_vel}\,.
\end{align}

\medbreak


At this point, let us specify better what we mean for \emph{finite energy weak solution} (see \cite{F-N} for details). 
\begin{definition} \label{d:weak}
Let $\Omega = \R^2 \times\,  ]0,1[\,$. Fix  $T>0$ and $\veps>0$. Let $(\vrho_{0,\veps}, \vec u_{0,\veps})$ be 
an initial datum satisfying \eqref{in_vr}--\eqref{hyp:ill-vel}. We say that the couple $(\vrho_\veps, \vec u_\veps)$ is a \emph{finite energy weak solution} of the system 
\eqref{ceq}--\eqref{meq} in $\,]0,T[\,\times \Omega$,
supplemented with the boundary conditions \eqref{bc1-2} and far field conditions \eqref{ff}, related to the initial datum $(\vrho_{0,\veps}, \vec u_{0,\veps})$, if the following conditions hold:
\begin{enumerate}[(i)]
\item the functions $\vrho_\veps$ and $\ue$ belong to the class
\begin{equation*}
\vrho_\veps\geq 0\,,\; \vrho_\veps - \widetilde{\vrho}_\veps\,\in L^\infty\big(0,T; L^2+L^\gamma (\Omega)\big)\,,\;
\ue \in L^2\big(0,T;H^1(\Omega)\big),\; \big(\ue \cdot \n\big) _{|\partial \Omega} = 0\, ;
\end{equation*}
\item the equations have to be satisfied in a distributional sense:
	\begin{equation}\label{weak-con}
	-\int_0^T\int_{\Omega} \left( \vre \partial_t \varphi  + \vre\ue \cdot \nabla_x \varphi \right) \dxdt = 
	\int_{\Omega} \vrez \varphi(0,\cdot) \dx
	\end{equation}
for any $\varphi\in C^\infty_c([0,T[\,\times \overline\Omega)$ and
	\begin{align}
	&\int_0^T\!\!\!\int_{\Omega}  
	\left( - \vre \ue \cdot \partial_t \vec\psi - \vre [\ue\otimes\ue]  : \nabla_x \vec\psi 
	+ \frac{1}{\ep} \, \e_3 \times (\vre \ue ) \cdot \vec\psi  - \frac{1}{\ep^{2m}} p(\vre) \div \vec\psi  \right) \dxdt \label{weak-mom} \\
	& =\int_0^T\!\!\!\int_{\Omega} 
	\left(- \mbb{S}(\nabla_x\vec u_\veps)  : \nabla_x \vec\psi +  \frac{1}{\ep^{2n}} \vre \nabla_x G\cdot \vec\psi \right) \dxdt 
	+ \int_{\Omega}\vrez \uez  \cdot \vec\psi (0,\cdot) \dx \nonumber
	\end{align}
for any test function $\vec\psi\in C^\infty_c([0,T[\,\times \overline\Omega; \R^3)$ such that $\big(\vec\psi \cdot \n \big)_{|\partial {\Omega}} = 0$;
\item the energy inequality holds for almost every $t\in (0,T)$:
\begin{align}
&\hspace{-0.7cm} \int_{\Omega}\frac{1}{2}\vre|\ue|^2(t) \dx\,+\,\frac{1}{\ep^{2m}}\int_{\Omega}\mc E\left(\vrho_\veps,\wtilde\vrho_\veps\right)(t) \dx
+  \int_0^t\int_{\Omega} \mbb S(\nabla_x \ue):\nabla_x \ue \, \dx {\rm d}\tau  \label{est:dissip} \\
&\qquad\qquad\qquad\qquad\qquad\qquad\qquad\qquad
\,\leq\,
\int_{\Omega}\frac{1}{2}\vrez|\uez|^2 \dx\,+\,
\frac{1}{\ep^{2m}}\int_{\Omega}\mc E\left(\vrho_{0,\veps},\wtilde\vrho_\veps\right) \dx\, ,
\nonumber
\end{align}
where the function
\begin{equation} \label{def:rel-entropy}
\mc E\left(\rho,\wtilde\vrho_\veps\right)\,:=\,H(\rho) - (\rho - \vret)\, H^\prime(\vret)
- H(\vret)
\end{equation}
 is the \emph{relative internal energy} of the fluid, with $H$ given by \eqref{eq:target-rho}.
\end{enumerate}
The solutions is \emph{global} if the previous conditions hold for all $T>0$.
\end{definition}
	
Under the assumptions fixed above, for any \emph{fixed} value of the parameter $\veps\in\,]0,1]$,
the existence of a global in time finite energy weak solution $(\vrho_\veps,\vec u_\veps)$ to system \eqref{ceq}--\eqref{meq}, related to the initial datum
$(\vrho_{0,\veps},\vec u_{0,\veps})$, in the sense of the previous definition, can be proved as in the classical case, see \tsl{e.g.} \cite{Lions_2}, \cite{Feireisl}. 
Notice that the mapping $t \mapsto (\vre\ue)(t,\cdot)$ is weakly continuous, and one has $(\vre)_{|t=0} = \vrez$ together with $(\vre\ue)_{|t=0}= \vrez\uez$. 

We remark also that, in view of \eqref{ceq}, the total mass is conserved in time, in the following sense: for almost every $t\in[0,+\infty[\,$,
one has
\begin{equation*} \label{eq:mass_conserv}
\int_{\Omega}\bigl(\vre(t)\,-\,\vret\bigr)\,\dx\,=\,0\,.
\end{equation*}

To conclude, we point out that, in our framework of finite energy weak solutions to the primitive system,
inequality \eqref{est:dissip} will be the only tool to derive uniform estimates for the family of weak solutions we are going to consider.

\subsection{Main results}\label{ss:results}

We can now state our main results. We point out that, due to the scaling \eqref{eq:scale-our}, the relation $m>n$ is always true,
so we will always be in a low stratification regime.

The first statement concerns the case when the effects linked to the pressure term are predominant with respect to the fast rotation, \tsl{i.e.} $m>1$.

\begin{theorem}\label{th:m>1}
Let $\Omega= \R^2 \times\,]0,1[\,$ and $G\in W^{1,\infty}(\Omega)$ be as in \eqref{assG}. Take $m>1$ and $m+1\geq 2n >m$.
For any fixed value of $\veps \in \; ]0,1]$, let initial data $\left(\vrho_{0,\veps},\vec u_{0,\veps}\right)$ verify the hypotheses fixed in Paragraph \ref{sss:data-weak}, and let
$\left( \vre, \ue\right)$ be a corresponding weak solution to system \eqref{ceq}--\eqref{meq}, supplemented with the structural hypotheses  \eqref{S} on $\mbb{S}(\nabla_x \ue)$ and with boundary conditions \eqref{bc1-2} and far field conditions \eqref{ff}.
Let $\vec u_0$ be defined as in \eqref{conv:in_data_vel}.

Then, for any $T>0$ one has the convergence properties
	\begin{align*}
	\varrho_\ep \rightarrow 1 \qquad\qquad &\mbox{ strongly in } \qquad L^{\infty}\big(0,T; L_{\rm loc}^{\min\{2,\g\}}(\Omega )\big) \\
	\vec{u}_\ep \weak \vec{U}
	\qquad\qquad &\mbox{ weakly in }\qquad L^2\big(0,T;H^{1}(\Omega)\big)\,, 
	\end{align*}	
where $\vec{U} = (\vec U^h,0)$, with $\vec U^h=\vec U^h(t,x^h)$ such that $\divh\vec U^h=0$. In addition, the vector field $\vec{U}^h $ is a weak solution
to the following homogeneous incompressible Navier-Stokes system  in $\R_+ \times \R^2$,
\begin{align}
& \d_t \vec U^{h}+\divh\left(\vec{U}^{h}\otimes\vec{U}^{h}\right)+\nabla_h\Gamma-\mu \Delta_{h}\vec{U}^{h}=0\, , \label{eq_lim_m:momentum} 
\end{align}
for a suitable pressure function $\Gamma\in\mc D'(\R_+\times\R^2)$ and related to the initial condition
$$
\vec{U}_{|t=0}=\h_h\left(\lan\vec{u}^h_{0}\ran\right)\, .
$$
\end{theorem}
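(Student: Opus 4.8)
The plan is to follow the compensated compactness scheme of \cite{DS-F-S-WK}, adapted to the new scaling regime $m>1$, $m<2n\le m+1$. First I would collect the consequences of the energy inequality \eqref{est:dissip}: using Proposition \ref{p:target-rho_bound} and the coercivity of the relative internal energy $\mc E$, one obtains that $\big(\sqrt{\vre}\,\ue\big)_\veps$ is bounded in $L^\infty_T(L^2)$, that the velocities $(\ue)_\veps$ are bounded in $L^2_T(H^1)$ (using Korn's inequality and the structure \eqref{S} of $\mbb S$), and that the density oscillations are small: writing $\vre=1+\veps^{2(m-n)}R_\veps$, the family $(R_\veps)_\veps$ is bounded in $L^\infty_T\big((L^2+L^\gamma)(\Omega)\big)$, so that $\vre\to1$ strongly in $L^\infty_T(L^{\min\{2,\gamma\}}_{\rm loc})$; since $2(m-n)>0$ and also $2n-m>0$ in the present range, both the ``static'' part $\veps^{2(m-n)}\widetilde r_\veps$ and the ``fluctuation'' part $\veps^m\vrho^{(1)}_{0,\veps}$ of $\vrho_{0,\veps}-1$ vanish. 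Passing to weak limits (up to subsequences), I would get $\ue\weak\vec U$ in $L^2_T(H^1)$. From the momentum equation \eqref{weak-mom} multiplied by $\veps^{2n}$ (resp. $\veps^{2m}$) one reads off the constraints on the limit: $\nabla_x G = -\vec e_3$ forces, together with $\nabla_x p(\vre)/\veps^{2m}$, the relation determining that the limit density perturbation is vertical/hydrostatic, and the Coriolis term of order $1/\veps$ forces $\vec e_3\times\vec U=\nabla_x(\text{something})$, which classically yields $\vec U=(\vec U^h,0)$ with $\vec U^h=\vec U^h(t,x^h)$ and $\divh\vec U^h=0$; moreover $\langle\vre\ue\rangle$ and the limit of $\h_h\langle\vre^h\ue^h\rangle$ coincide with $\vec U^h$, giving the initial datum $\vec U_{|t=0}=\h_h\langle\vec u^h_0\rangle$.

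The heart of the argument is to pass to the limit in the nonlinear convective term $\vre[\ue\otimes\ue]$ tested against horizontal, $x^3$-independent, divergence-free test functions $\vec\psi=(\nabla_h^\perp\phi(t,x^h),0)$. To this end I would set up the acoustic/Poincaré-wave system satisfied by $\big(\veps^{-(2m-\ldots)}\ldots\big)$ — more precisely, by the ``incompressible part'' variables and the density oscillation $r_\veps:=(\vre-1)/\veps^{\min\{2m,2n\}}$ together with $\vec V_\veps:=\vre\ue$ (or its Helmholtz components). Writing the continuity and momentum equations in the form of a wave system $\veps\,\partial_t r_\veps + \div\vec V_\veps = $ (small), $\veps\,\partial_t\vec V_\veps + \vec e_3\times\vec V_\veps + \nabla_x(\text{pressure term}) + \veps^{\,2m-2n}\,r_\veps\nabla_x G = $ (small, + viscosity + convective), the crucial observation — and this is where the precise form \eqref{assG} of $G$, namely $\nabla_x G=-\vec e_3$ depending only on $x^3$, is used — is that the gravitational contribution $\veps^{2m-2n}r_\veps\,\vec e_3$ is purely vertical and therefore produces algebraic cancellations: it does not interact with the horizontal test functions and, crucially, when one computes the relevant bilinear quantities (products of the wave-system unknowns) the gravity term either drops out after applying $\curlh$/$\langle\cdot\rangle$ or is multiplied by a genuinely positive power of $\veps$ when $2n<m+1$. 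In the borderline case $2n=m+1$, the exponent $2m-2n=m-1>0$ still saves a power of $\veps$, but an extra $O(1)$ bilinear term involving $\widetilde r_\veps$ (the static part) appears; to kill it I would use that $\widetilde r_\veps$ is \emph{explicit}, depends on $x^3$ only, and has the special structure coming from \eqref{eq:target-rho}, so that its product with the purely horizontal, $x^3$-averaged limiting quantities vanishes after integration in $x^3$ (this is exactly the ``more thorough use of the structure'' advertised in the introduction).

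Concretely, for the convective term I would decompose $\ue=\langle\ue\rangle+\widetilde{\ue}$ and split the test against $\vec\psi$ accordingly. The contribution of $\widetilde{\ue}\otimes\widetilde{\ue}$ and mixed terms is handled by showing that the oscillatory part $\widetilde{\ue}$ (which carries the fast vertical Poincaré waves) converges weakly to zero after time averaging, \emph{via} a non-stationary phase / compensated compactness argument on the wave system — one exhibits the relevant quantities as time derivatives of terms that are $O(\veps)$ in a negative Sobolev norm, using the uniform bounds and interpolation (here Littlewood-Paley truncations from Appendix \ref{app:LP} are needed to localise in frequency and make the time-derivative estimates rigorous in the low-regularity setting, and also to deal with the $\R^2$ directions where there is no compactness). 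The contribution of $\langle\ue\rangle\otimes\langle\ue\rangle$ is the one that survives: one shows $\langle\ue^h\rangle\to\vec U^h$ strongly in $L^2_{t,\rm loc}(L^2_{\rm loc})$ — using the compactness of $\curlh\langle\ue^h\rangle=\curlh\langle\vre\ue^h\rangle+o(1)$ in some negative Sobolev space (Aubin–Lions on the vorticity equation, whose forcing terms are controlled uniformly) together with the divergence-free constraint and the Helmholtz decomposition — so that $\langle\ue^h\rangle\otimes\langle\ue^h\rangle\to\vec U^h\otimes\vec U^h$ in the sense of distributions. Combining these, the limit of \eqref{weak-mom} against the chosen test functions yields the weak formulation of \eqref{eq_lim_m:momentum} for $\vec U^h$, with the pressure $\Gamma$ recovered \emph{a posteriori} by De Rham's theorem, and with the viscous term passing to the limit by weak convergence of $\nabla_x\ue$ and the fact that $\mbb S(\nabla_x\vec U)$ reduces, on $x^3$-independent horizontal fields, to $\mu\Delta_h\vec U^h$. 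The main obstacle, as flagged above, is controlling the $O(1)$ bilinear term in the endpoint case $2n=m+1$ and, throughout, establishing the vanishing of the oscillatory interactions on the unbounded domain $\R^2\times\mathbb T^1$ — which is precisely where the explicit structure of $G$ and the algebra of the wave system must be exploited rather than mere energy bounds.
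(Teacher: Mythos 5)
Your overall architecture coincides with the paper's: energy bounds, identification of the limit constraints, reformulation as a singular wave system in the variables $\vrho^{(1)}_\veps=(\vre-\wtilde\vrho_\veps)/\veps^m$ and $\vec V_\veps=\vre\ue$, Littlewood--Paley regularisation, splitting of the convective term into a vertical-average part and an oscillating part, and Aubin--Lions compactness of the vorticity-type quantity $\curlh\langle\vec V^h_{\veps,M}\rangle-\veps^{m-1}\langle\vrho^{(1)}_{\veps,M}\rangle$, which is available precisely because $\nabla_xG=-\vec e_3$ has no horizontal component, so that $\curlh\langle\vec g^h_{\veps,M}\rangle\equiv0$. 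In the sub-critical range $2n<m+1$ your sketch is consistent with the paper's proof.

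The genuine gap is in the endpoint case $2n=m+1$. First, the arithmetic: the power of $\veps$ that matters in the oscillating part is $m+1-2n$ (it arises when one solves the curled momentum equation for $\widetilde{\vec V}^h_{\veps,M}$, which costs a factor $\veps^{-(m-1)}$ against the prefactor $\veps^{2(m-n)}$ of $\vec g_{\veps,M}$), and this power is exactly $0$ when $2n=m+1$; your claim that ``$2m-2n=m-1>0$ still saves a power of $\veps$'' is therefore not correct. Second, the surviving $O(1)$ bilinear term does not involve the static profile $\wtilde r_\veps$: it is $\langle\widetilde\omega^3_{\veps,M}\,((\d_3^{-1}\curl\widetilde{\vec g}_{\veps,M})^h)^\perp\rangle$, a product of two mean-free (in $x^3$) oscillating quantities, whose vertical average has no reason to vanish; so your proposed mechanism --- integration in $x^3$ of the product against $x^3$-independent limiting quantities --- does not apply here. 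The paper kills this term by a further use of the structure: since $\nabla_xG=-\vec e_3$, one computes $(\curl\widetilde{\vec g}_{\veps,M})^{h,\perp}=-\nabla_h\widetilde\vrho^{(1)}_{\veps,M}$; one then substitutes for $\nabla_h\widetilde\vrho^{(1)}_{\veps,M}$ from the gradient part of the regularised momentum equation, namely $\nabla_h\widetilde\vrho^{(1)}_{\veps,M}=-\veps^m\,\d_t\widetilde{\vec V}^h_{\veps,M}+\mc R_{\veps,M}$, and finally integrates by parts in time, using the equation for $\widetilde\omega^3_{\veps,M}$ and the assumption $m>1$ to recover a positive power of $\veps$. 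Without this (or an equivalent) argument the endpoint case $2n=m+1$ of the theorem is not proved.
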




When $m=1$, the Mach and Rossby numbers have the same order of magnitude, and they keep in balance in the whole asymptotic process,
realising in this way the so-called \emph{quasi-geostrophic balance} in the limit. 
The next statement is devoted to this case.
\begin{theorem} \label{th:m=1}
Let $\Omega = \R^2 \times\,]0,1[\,$ and let $G\in W^{1,\infty}(\Omega)$ be as in \eqref{assG}. Take $m=1$ and $1/2<n<1$. 
For any fixed value of $\veps \in \; ]0,1]$, let initial data $\left(\vrho_{0,\veps},\vec u_{0,\veps}\right)$ verify the hypotheses fixed in Paragraph \ref{sss:data-weak}, and let
$\left( \vre, \ue\right)$ be a corresponding weak solution to system \eqref{ceq}--\eqref{meq}, supplemented with the structural hypotheses  \eqref{S} on $\mbb{S}(\nabla_x \ue)$ and with boundary conditions \eqref{bc1-2} and far field conditions \eqref{ff}.
Let $\left(\vrho^{(1)}_0,\vec u_0\right)$ be defined as in \eqref{conv:in_data_vrho} and \eqref{conv:in_data_vel}.

Then, for any $T>0$ one has the following convergence properties:
	\begin{align*}
	\varrho_\ep \rightarrow 1 \qquad\qquad &\mbox{ strongly in } \qquad L^{\infty}\big(0,T; L_{\rm loc}^{\min\{2,\g\}}(\Omega )\big) \\
	\vrho^{(1)}_\veps:=\frac{\varrho_\ep - \widetilde{\vrho_\veps}}{\ep}  \weakstar \vrho^{(1)} \qquad\qquad &\mbox{ weakly-$*$ in }\qquad L^{\infty}\big(0,T; L^{2}+L^{\gamma}(\Omega )\big) \\
	\vec{u}_\ep \weak \vec{U}
	\qquad\qquad &\mbox{ weakly in }\qquad L^2\big(0,T;H^{1}(\Omega)\big)\,,
	\end{align*}	
where, as above, $\vec{U} = (\vec U^h,0)$, with $\vec U^h=\vec U^h(t,x^h)$ such that $\divh\vec U^h=0$. 
Moreover, one has the relation $\vec U_h=\nabla_h^\perp \vrho^{(1)}$, and $\vrho^{(1)}$ satisfies (in the weak sense) the  quasi-geostrophic equation
\begin{align}
& \d_{t}\left(\vrho^{(1)}-\Delta_{h}\vrho^{(1)}\right) -\nabla_{h}^{\perp}\vrho^{(1)}\cdot
\nabla_{h}\left( \Delta_{h}\vrho^{(1)}\right) +\mu 
\Delta_{h}^{2}\vrho^{(1)}\,=\,0\,, \label{eq_lim:QG}  
\end{align}
supplemented with the initial condition
$$
\left(\vrho^{(1)}-\Delta_{h}\vrho^{(1)}\right)_{|t=0}= \langle \vrho_0^{(1)}\rangle-\curlh\lan\vec u^h_{0}\ran\,.
$$
\end{theorem}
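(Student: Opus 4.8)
\textbf{Proof proposal for Theorem \ref{th:m=1}.}

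The plan is to follow the same compensated-compactness strategy used for Theorem \ref{th:m>1}, but now tracking the extra information available because the Mach and Rossby numbers are of the same order ($m=1$), which forces the quasi-geostrophic balance in the limit. The starting point will be the uniform bounds coming from the energy inequality \eqref{est:dissip} (established in Section \ref{s:energy}): boundedness of $\big(\sqrt{\vre}\,\ue\big)_\veps$ in $L^\infty_T(L^2)$, of $\big(\ue\big)_\veps$ in $L^2_T(H^1)$, and, crucially, $\veps$-weighted control on the density oscillations, giving $\big(\vrho^{(1)}_\veps\big)_\veps \subset L^\infty_T(L^2+L^\gamma)$ after using Proposition \ref{p:target-rho_bound} to absorb the static-state discrepancy $\wtilde r_\veps$. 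These yield the weak limit points $\vU$ and $\vrho^{(1)}$ claimed in the statement. Passing to the limit in the continuity equation \eqref{weak-con} and in the weak momentum equation \eqref{weak-mom} at the level of the singular terms gives the constraints: $\vU = (\vU^h,0)$ with $\divh\vU^h=0$ and $\vU^h=\vU^h(t,x^h)$ (Taylor--Proudman), together with the relation tying $\vU^h$ to $\vrho^{(1)}$. In the case $m=1$ the Coriolis and pressure terms combine: the limit of $\frac{1}{\veps}\,\e_3\times\vre\ue + \frac{1}{\veps^2}\nabla_x p(\vre)$ being finite forces $\nabla_x\big(H''(1)\vrho^{(1)}\big) = \nabla_x\vrho^{(1)}$ to balance $\e_3\times\vU$, whence, on horizontal components and after using the divergence-free and $x^3$-independence constraints, $\vU^h=\nabla_h^\perp\vrho^{(1)}$ with $\vrho^{(1)}=\vrho^{(1)}(t,x^h)$.

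The heart of the argument is the identification of the limit equation \eqref{eq_lim:QG}. For this I would introduce the ``good unknown'' built from the vorticity and the density fluctuation, namely test the momentum equation with $\curlh$ of a two-dimensional, $x^3$-independent, divergence-free test field (equivalently, work with the quantity $\curlh\ue^h - \vrho^{(1)}_\veps$, whose limit is $\curlh\vU^h - \vrho^{(1)} = -\Delta_h\vrho^{(1)}+\vrho^{(1)}$), so that the stiff $O(1/\veps)$ terms cancel algebraically. One then has to pass to the limit in the resulting evolution equation for this quantity. The linear terms (the viscous term $\mu\Delta_h^2\vrho^{(1)}$ and the time derivative) are handled by weak convergence. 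The delicate term is the convective one: one must show that $\vre[\ue\otimes\ue]$ (projected appropriately) converges to $\vU^h\otimes\vU^h$ in the sense of distributions, and that taking $\curlh$ of the horizontal average produces exactly $-\nabla_h^\perp\vrho^{(1)}\cdot\nabla_h(\Delta_h\vrho^{(1)})$. This requires strong convergence of the \emph{horizontal} part of $\lan\ue^h\ran$ (or at least of $\h_h\lan\ue^h\ran$), which is obtained via a compensated-compactness/Aubin--Lions type argument: the vorticity $\curlh\lan\ue^h\ran$ enjoys a compactness property (a time-regularity estimate deduced from the equation, in the spirit of \cite{Fan-G}, \cite{C-F}), while the gradient part is controlled by the acoustic analysis below; oscillatory parts of the solution (the $\widetilde{\;\cdot\;}$ components and the non-solenoidal, non-$x^3$-averaged modes) are shown to vanish.

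The main obstacle, exactly as advertised in the introduction, is controlling the acoustic/Poincar\'e--type waves generated by the singular terms, and in particular handling the gravity contribution $\frac{1}{\veps^{2n}}\vre\nabla_x G$ when $2n>m=1$, i.e. $n\in(1/2,1)$. Here one sets up the system of wave equations for the fast variables (the ``acoustic part'' of the velocity and the density fluctuation), of the schematic form $\veps\,\partial_t(\text{acoustic data}) + \mc A(\text{acoustic data}) = \veps\times(\text{bounded})$, where $\mc A$ incorporates the wave operator together with the Coriolis and the rescaled gravity forcing. The point is that $G=-x^3$ depends on the vertical variable only, so its gradient $\nabla_x G=-\e_3$ has \emph{zero} horizontal component and is already $x^3$-independent (indeed constant); consequently, after projecting onto the horizontal, zero-vertical-average dynamics that governs the limit, the potentially dangerous $O(\veps^{1-2n})$ or $O(1)$ contributions of the gravity term either drop out by the algebraic cancellation (the cancellation uses precisely that $\langle\partial_3(\cdot)\rangle=0$ and that $\nabla_h G=0$) or can be absorbed into a corrected pressure/potential that does not affect the vorticity formulation. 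I would then invoke a RAGE-type theorem, or a direct spectral/dispersive analysis of the operator $\mc A$ on the torus in $x^3$ and in low/high horizontal frequencies, to conclude that all non-geostrophic modes relax to zero strongly in $L^2_{\rm loc}$, so that no spurious quadratic term survives in the limit and the convective term reduces to the claimed Jacobian $\nabla_h^\perp\vrho^{(1)}\cdot\nabla_h\Delta_h\vrho^{(1)}$. Finally, matching the initial datum: since $(\vre\ue)_{|t=0}=\vrez\uez$ and $\vrho^{(1)}_{\veps|t=0}\to\vrho^{(1)}_0$, passing to the limit in the integrated weak formulation (using the weak continuity in time of $\vre\ue$ and of the good unknown) gives $\big(\vrho^{(1)}-\Delta_h\vrho^{(1)}\big)_{|t=0}=\langle\vrho^{(1)}_0\rangle-\curlh\langle\vu^h_0\rangle$, which completes the proof.
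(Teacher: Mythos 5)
Your overall architecture matches the paper's: uniform bounds from the energy inequality, weak limit points, the quasi-geostrophic constraint $\vec U^h=\nabla_h^\perp\vrho^{(1)}$ obtained from the balance of the Coriolis and pressure terms at order $\veps^{-1}$, the good unknown $\curlh\lan\ue^h\ran-\vrho^{(1)}_\veps$, and the observation that $\nabla_hG=0$ makes the horizontal curl of the gravity source vanish. There are, however, two genuine gaps in the execution. The first concerns the oscillating (non-geostrophic) part of the convective term. You invoke ``a RAGE-type theorem, or a direct spectral/dispersive analysis'' to conclude that \emph{all non-geostrophic modes relax to zero strongly in} $L^2_{\rm loc}$. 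This is neither what the paper proves nor what is needed: in the compensated-compactness framework the fast oscillations do \emph{not} decay; one only shows that their quadratic self-interaction, tested against divergence-free, $x^3$-independent test functions, vanishes as $\veps\to0^+$. Concretely, after regularising with $S_M$ and splitting the convective term as in \eqref{def:T1-2}, the oscillating part $\mc T^2_{\veps,M}$ is shown to be a remainder by purely algebraic manipulations of the regularised wave system (introducing $\wtilde\Phi^h_{\veps,M}$ and $\wtilde\omega^3_{\veps,M}$, swapping time derivatives, absorbing perfect gradients), using crucially that $m+1>2n$, \tsl{i.e.} $n<1$, so that the gravity contribution in \eqref{rel_oscillations} carries a positive power of $\veps$. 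Asserting strong relaxation of the oscillations via RAGE/dispersive tools on $\R^2\times\T^1$ is an unproven (and stronger than necessary) claim, and it is precisely the technique the paper abandons in order to reach the present scaling. Similarly, for the averaged part the quantity that is compact in time is only the combination $\wtilde\gamma_{\veps,M}=\curlh\lan\vec V^h_{\veps,M}\ran-\lan\vrho^{(1)}_{\veps,M}\ran$ (whose time derivative equals $\curlh\lan\vec f^h_{\veps,M}\ran$); when $m=1$ the vorticity $\curlh\lan\ue^h\ran$ alone is \emph{not} compact, and strong convergence of $\h_h\lan\ue^h\ran$ is neither available nor required — weak convergence of $\lan\vec V^h_{\veps,M}\ran$ paired with the strongly convergent scalar $\wtilde\gamma_{\veps,M}$ suffices.

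The second gap is the claim that the stiff $O(1/\veps)$ terms ``cancel algebraically'' against the test function $\nabla_h^\perp\phi$. The pressure gradient does vanish, but the Coriolis term does not: one has $\tfrac1\veps\vre(\ue^h)^\perp\cdot\nabla_h^\perp\phi=\tfrac1\veps\vre\ue^h\cdot\nabla_h\phi$, which is converted, via the weak form of the mass equation, into $-\int\lan\vrho^{(1)}_\veps\ran\,\d_t\phi-\int\lan\vrho^{(1)}_{0,\veps}\ran\,\phi(0,\cdot)$. This is exactly where the $\d_t\vrho^{(1)}$ part of $\d_t\big(\vrho^{(1)}-\Delta_h\vrho^{(1)}\big)$ and the $\lan\vrho^{(1)}_0\ran$ part of the initial datum come from; if the singular terms truly cancelled, your limit equation would be missing them. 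Relatedly, the limit of the convective term contains the extra contribution $-\vrho^{(1)}(\vec U^h)^\perp$ (see \eqref{eq:limit_T1-1}), which disappears only because $\vec U^h=\nabla_h^\perp\vrho^{(1)}$; this point should be made explicit.
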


\section{Consequences of the energy inequality} \label{s:energy}

In Definition \ref{d:weak}, we have postulated that the family of weak solutions $\big(\vrho_\veps,\vu_\veps\big)_\veps$ considered in Theorems
\ref{th:m>1} and \ref{th:m=1} satisfies the energy inequality \eqref{est:dissip}. 
In this section we take advantage of that fact  to infer uniform bounds for $\big(\vrho_\veps,\vu_\veps\big)_\veps$, see Section \ref{ss:unif-est}.
Thanks to those bounds, we can extract weak-limit points of the sequence of solutions and deduce some properties those limit points
have to satisfy, see Section \ref{ss:ctl1}.

\subsection{Uniform bounds and weak limits}\label{ss:unif-est}

This section is devoted to establish uniform bounds on the sequence $\bigl(\vrho_\veps,\vec u_\veps\bigr)_\veps$.
This can be done as in the classical case (see \tsl{e.g.} \cite{F-N} for details), since
the Coriolis term does not contribute to the total energy balance of the system.
However, for the reader's convenience, let us present some details.

To begin with, let us introduce a partition of the space domain $\Omega$ into the so-called ``essential'' and ``residual'' sets.
For this, for $t>0$ and for all $\veps\in\,]0,1]$, we define the sets
$$
\Omega_\ess^\veps(t)\,:=\,\left\{x\in\Omega\;\big|\quad \vrho_\veps(t,x)\in\left[1/2\,\rho_*\,,\,2\right]\right\}\,,\qquad\Omega^\veps_\res(t)\,:=\,\Omega\setminus\Omega^\veps_\ess(t)\,,
$$
where the positive constant $\rho_*>0$ has been defined in Proposition \ref{p:target-rho_bound}.
Then, given a function $h$, we write
$$
h\,=\,\left[h\right]_\ess\,+\,\left[h\right]_\res\,,\qquad\qquad\mbox{ where }\qquad \left[h\right]_\ess\,:=\,h\,\mathds{1}_{\Omega_\ess^\veps(t)}\,.
$$
Here above, $\mathds{1}_A$ denotes the characteristic function of a set $A\subset\Omega$.

Next, we observe that
\[
\Big[\mc E\big(\rho(t,x),\wtilde\vrho_\veps(x)\big)\Big]_\ess\,\sim\,\left[\rho-\wtilde\vrho_\veps(x)\right]_\ess^2
\qquad\quad \mbox{ and }\qquad\quad
\Big[\mc E\big(\rho(t,x),\wtilde\vrho_\veps(x)\big)\Big]_\res\,\geq\,C\left(1\,+\,\big[\rho(t,x)\big]_\res^\g\right)\,,
\]
where $\vret$ is the static density state identified in Section \ref{sss:equilibrium}  and $\mc E$ is given by  
\eqref{def:rel-entropy}.
Here above, the multiplicative constants are all strictly positive and may depend on $\rho_*$ and we agree to write $A\sim B$ whenever there exists a ``universal'' constant $c>0$ such that $(1/c)\, B\leq A\leq c\, B$.

Thanks to the previous observations, we easily see that, under the assumptions fixed in Section \ref{s:result} on the initial data,
the right-hand side of \eqref{est:dissip} is \emph{uniformly bounded} for all $\veps\in\,]0,1]$. Specifically, we have
$$
\int_{\Omega} \frac{1}{2}\vrez|\uez|^2\,\dx + \frac{1}{\ep^{2m}}\int_{\Omega}\mc E\left(\vrho_{0,\veps},
\, \wtilde\vrho_\veps\right)\,\dx\,\leq\,C\,.
$$

Owing to the previous inequalities and the finite energy condition \eqref{est:dissip} on the family of weak solutions,
it is quite standard to derive, for any time $T>0$ fixed and any $\veps\in\,]0,1]$, the following estimates:
\begin{align}
	\sup_{t\in[0,T]} \| \sqrt{\vre}\ue\|_{L^2(\Omega;\, \R^3)}\, &\leq\,c \label{est:momentum} \\	
	\sup_{t\in[0,T]} \left\| \left[ \dfrac{\vre - \vret}{\ep^m}\right]_\ess (t) \right\|_{L^2(\Omega)}\,&\leq\, c \label{est:rho_ess} \\
	\sup_{t\in[0,T]} \int_{\Omega}	\bbbone_{\mc{M}^\veps_\res[t]} \,dx\,&\leq \, c\,\ep^{2m} \label{est:M_res-measure}\\
	\sup_{t\in [0,T]} \int_{\Omega} [ \vre]^{\gamma}_\res (t)\,\dx \,
	\,&\leq\,c\,\ep^{2m} \label{est:rho_res} \\
	\int_0^T \left\| \nabla_x \ue + \nabla_x^T \ue  - \frac{2}{3} \div \ue \tens{Id} \right\|^2_{L^2(\Omega ;\, \R^{3\times3})}\, \dt\,
	&\leq\, c\, . \label{est:Du} 
	\end{align}
We refer to \cite{F-N} (see also \cite{F-G-N}, \cite{F-G-GV-N}, \cite{F_2019} and \cite{DS-F-S-WK}) for the details of the computations.

Owing to \eqref{est:Du} and a generalisation of the Korn inequality (see \tsl{e.g.} Chapter 10 of \cite{F-N}), we gather that
$\big(\nabla\vu_\veps\big)_\veps\,\subset\,L^2_T(L^2)$. On the other hand, by arguing as in \cite{F-G-N}, we can use
\eqref{est:momentum}, \eqref{est:M_res-measure} and \eqref{est:rho_res} to deduce that also
$\big(\vu_\veps\big)_\veps\,\subset\,L^2_T(L^2)$. Putting those bounds together, we finally infer that
\begin{equation}\label{unif-bound-for-vel}
\int_0^T \left\|\ue  \right\|^2_{H^{1}(\Omega ;\, \R^{3})}\, \dt\,\leq \, c\, .
\end{equation} 
In particular, there exist $\vU\,\in\,L^2_{\rm loc}\big(\R_+;H^1(\Omega;\R^3)\big)$ such that, up to a suitable extraction (not relabelled here),
we have
\begin{equation} \label{conv:u}
\vu_\veps\,\rightharpoonup\,\vU\qquad\qquad \mbox{ in }\quad L^2_{\rm loc}\big(\R_+;H^1(\Omega;\R^3)\big)\,.
\end{equation}

Let us move further and consider the density functions. The previous estimates on the density tell us that we must find a finer decomposition for the densities. As a matter of fact, for any time $T>0$ fixed, we have
	\begin{equation}\label{rr1}
\| \vre - 1 \|_{L^\infty_T(L^2 + L^{\gamma} + L^\infty)}\,\leq\,c\,  \ep^{2(m-n)}\,.
	\end{equation}
In order to see \eqref{rr1} we write 
\begin{equation}\label{rel:density_1}
|\vrho_\veps-1|\,\leq\,|\vrho_\veps-\widetilde{\vrho}_\veps|+|\widetilde{\vrho}_\veps-1|\,.
\end{equation}	
From \eqref{est:rho_ess}, we infer that $\big[\vr_\veps\,-\,\wtilde{\vr}_\veps\big]_\ess$ is of order $O(\veps^m)$ in $L^\infty_T(L^2)$.
For the residual part of the same term, we can use \eqref{est:rho_res} to discover that it is of order $O(\veps^{2m/\gamma})$.
Observe that, if $1<\g<2$, the higher order is $O(\veps^m)$, whereas, in the case $\g\geq2$, by use of \eqref{est:rho_res} and \eqref{est:M_res-measure} again, it is easy to get
\begin{equation} \label{est:res_g>2}
\left\|\left[\vr_\veps\,-\,\wtilde{\vr}_\veps\right]_\res\right\|_{L^\infty_T(L^2)}^2\,\leq\,C\,\veps^{2m}\,.
\end{equation}
Finally, we apply Proposition \ref{p:target-rho_bound} to control the last term in the right-hand side of \eqref{rel:density_1}.
In the end, estimate \eqref{rr1} is proved.

This having been established, and keeping in mind the notation introduced in \eqref{in_vr} and \eqref{eq:in-dens_dec}, we can introduce the
density oscillation functions
\[ 
R_\veps\,:=\, \frac{\varrho_\ep -1}{\ep^{2(m-n)}}\, =\,\wtilde{r}_\veps\,+\,\veps^{2n-m}\,\vrho_\veps^{(1)}\,,
\] 
where we have defined
\begin{equation} \label{def_deltarho}
\vrho_\veps^{(1)}(t,x)\,:=\,\frac{\vre-\wtilde{\vrho}_\veps}{\ep^m}\qquad\mbox{ and }\qquad
\wtilde{r}_\veps(x)\,:=\,\frac{\wtilde{\vrho}_\veps-1}{\ep^{2(m-n)}}\,.
\end{equation}

Thanks again to \eqref{est:rho_ess}, \eqref{est:rho_res} and Proposition \ref{p:target-rho_bound}, we see that the above quantities verify the following uniform bounds, for any time $T>0$ fixed:
\begin{equation}\label{uni_varrho1}
\sup_{\veps\in\,]0,1]}\left\|\vrho_\veps^{(1)}\right\|_{L^\infty_T(L^2+L^{\gamma}({\Omega}))}\,\leq\, c \qquad\qquad\mbox{ and }\qquad\qquad
\sup_{\veps\in\,]0,1]}\left\| \wtilde{r}_\veps \right\|_{L^{\infty}(\Omega)}\,\leq\, c \,.
\end{equation}
In view of the previous properties, there exist $\vrho^{(1)}\in L^\infty_{\rm loc}(\R_+;L^2+L^{\gamma})$ and
$\wtilde{r}\in L^\infty$ such that (up to the extraction of a new suitable subsequence), for any $T>0$ we have
\begin{equation} \label{conv:rr}
\vrho_\veps^{(1)}\,\weakstar\,\vrho^{(1)}\quad\mbox{ weakly-$*$ in } L^\infty(0,T;L^2+L^{\gamma}(\Omega))\qquad 
\mbox{ and }\qquad \wtilde{r}_\veps\,\weakstar\,\wtilde{r} \quad\mbox{ weakly-$*$ in } L^\infty(\Omega).
\end{equation}
In particular, we get
	\begin{equation*} 
	R_\veps\, \weakstar\,\widetilde{r} \quad \mbox{ weakly-$*$ in } L^\infty\bigl(0,T; L^{\min\{\g,2\}}_{\rm loc}(\Omega)\bigr)\, . 
	\end{equation*}

\begin{remark} \label{r:g>2}
Observe that, owing to \eqref{est:res_g>2}, when $\g\geq2$ we get
\[
\sup_{\veps\in\,]0,1]}\left\|\vrho_\veps^{(1)}\right\|_{L^\infty_T(L^2)}\,\leq\, c\, .
\]
Therefore, in that case we actually have that $\vrho^{(1)}\,\in\,L^\infty_T(L^2)$ and that $\vrho_\veps^{(1)}\,\stackrel{*}{\rightharpoonup}\,\vrho^{(1)}$ in $L^\infty_T(L^2)$.

Analogously, when $\g\geq2$ we also get
\[
\| \vre - 1 \|_{L^\infty_T(L^2 + L^\infty)}\,\leq\,c\,  \ep^{2(m-n)}\, .
\]

\end{remark}

\subsection{Constraints on the limit}\label{ss:ctl1}

In this section, we establish some properties that the limit points of the family $\bigl(\vrho_\veps,\vec u_\veps \bigr)_\veps$, which have been
identified here above, have to satisfy.

We first need a preliminary result about the decomposition of the pressure function, which will be useful in the following computations.
\begin{lemma}\label{lem:manipulation_pressure}
Let $(m,n)\in\R^2$ verify the condition $m+1\,\geq\,2n\,>\,m\, \geq 1$. Let $p$ be the pressure term satisfying the structural hypotheses \eqref{pp1}--\eqref{pp2}. Then, for any $\veps\in\,]0,1]$, one has
 \begin{equation}\label{p_decomp_lemma}
\begin{split}
\frac{1}{\veps^{2m}}\,\nabla_x\Big(p(\vrho_\veps)\,-\,p(\wtilde{\vrho}_\veps)\Big)\,=\,
\frac{1}{\veps^m}\nabla_x \Big(p^\prime (1)\vrho_\veps^{(1)}\Big)\,+\, 
\frac{1}{\veps^{2n-m}}\,\nabla_x \Pi_\veps\, ,
\end{split}
 \end{equation}
 where the functions $\vr_\veps^{(1)}$ have been introduced in \eqref{def_deltarho} and, for all $T>0$, the family $\big(\Pi_\veps\big)_\veps$ verifies the uniform bound
\begin{equation}\label{unif-bound-Pi}
\left\|\Pi_\veps\right\|_{L^\infty_T(L^1+L^2+L^\g)}\,\leq\,C\,.
\end{equation}
When $\g\geq2$, one can dispense of the space $L^\g$ in the above control of $\big(\Pi_\veps\big)_\veps$.
\end{lemma}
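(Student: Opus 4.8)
The plan is to perform a Taylor expansion of the pressure around the equilibrium state $\wtilde\vrho_\veps$ and then around the constant state $1$, isolating the linear term in the density oscillation and collecting all the remaining contributions into the single function $\Pi_\veps$. First I would write, using the static equation \eqref{prF} in the form $\nabla_x p(\wtilde\vrho_\veps)=\veps^{2(m-n)}\wtilde\vrho_\veps\nabla_x G$, that
\begin{equation*}
\frac{1}{\veps^{2m}}\nabla_x\big(p(\vrho_\veps)-p(\wtilde\vrho_\veps)\big)
=\frac{1}{\veps^{2m}}\nabla_x\big(p(\vrho_\veps)-p(\wtilde\vrho_\veps)-p'(\wtilde\vrho_\veps)(\vrho_\veps-\wtilde\vrho_\veps)\big)
+\frac{1}{\veps^{2m}}\nabla_x\big(p'(\wtilde\vrho_\veps)(\vrho_\veps-\wtilde\vrho_\veps)\big).
\end{equation*}
Recalling $\vrho_\veps-\wtilde\vrho_\veps=\veps^m\vrho_\veps^{(1)}$ and writing $p'(\wtilde\vrho_\veps)=p'(1)+\big(p'(\wtilde\vrho_\veps)-p'(1)\big)$, the second term splits into the desired leading term $\veps^{-m}\nabla_x\big(p'(1)\vrho_\veps^{(1)}\big)$ plus a contribution of the form $\veps^{-m}\nabla_x\big((p'(\wtilde\vrho_\veps)-p'(1))\vrho_\veps^{(1)}\big)$; since $|p'(\wtilde\vrho_\veps)-p'(1)|\lesssim|\wtilde\vrho_\veps-1|\lesssim\veps^{2(m-n)}$ by Proposition \ref{p:target-rho_bound} and the smoothness of $p$ on the range $[\rho_*,2]$ where $\wtilde\vrho_\veps$ lives, this term is $\veps^{m-2n}\cdot O(\veps^{2m-2n+?})$—more precisely it equals $\veps^{-(2n-m)}$ times something bounded, so it is absorbed into the $\veps^{-(2n-m)}\nabla_x\Pi_\veps$ part, contributing a piece of $\Pi_\veps$ that is $O(\veps^{2(m-n)})$ in $L^\infty_T(L^2+L^\gamma)$ (here one uses the uniform bound \eqref{uni_varrho1} on $\vrho_\veps^{(1)}$).

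The remaining, genuinely nonlinear term requires more care because $\vrho_\veps$ is only controlled in the weak norms coming from \eqref{est:rho_ess} and \eqref{est:rho_res}. I would therefore split it along the essential/residual decomposition of Section \ref{ss:unif-est}. On the essential set, where $\vrho_\veps\in[\rho_*/2,2]$ and $\wtilde\vrho_\veps$ is close to $1$, a second-order Taylor estimate gives $\big|p(\vrho_\veps)-p(\wtilde\vrho_\veps)-p'(\wtilde\vrho_\veps)(\vrho_\veps-\wtilde\vrho_\veps)\big|\lesssim|\vrho_\veps-\wtilde\vrho_\veps|^2\lesssim\veps^{2m}|\vrho_\veps^{(1)}|^2$, and by \eqref{est:rho_ess} the quantity $[\vrho_\veps^{(1)}]_\ess$ is bounded in $L^\infty_T(L^2)$; hence, after multiplying by $\veps^{-2m}$, this essential part is bounded in $L^\infty_T(L^1)$ uniformly in $\veps$, so it furnishes an $L^1$-piece of $\Pi_\veps$ (up to the harmless rescaling factor $\veps^{2n-m}\le1$). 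On the residual set, I would keep the pressure difference as it is: using $p(\vrho_\veps)\lesssim 1+\vrho_\veps^\gamma$, the bounds \eqref{est:M_res-measure} and \eqref{est:rho_res} show that $[p(\vrho_\veps)]_\res$ and $[p(\wtilde\vrho_\veps)]_\res$ (the latter actually vanishes for $\veps$ small since $\wtilde\vrho_\veps$ is essential) are of size $O(\veps^{2m})$ in $L^\infty_T(L^1)$, while the linear correction $[p'(\wtilde\vrho_\veps)(\vrho_\veps-\wtilde\vrho_\veps)]_\res$ is handled by Hölder, splitting $\vrho_\veps-\wtilde\vrho_\veps$ into the part $\ge 1$ (controlled in $L^\gamma$ by \eqref{est:rho_res}, giving $O(\veps^{2m})$ after one absorbs a factor $\veps^{2m(1-1/\gamma)}\le1$) and the part $\le1$ on a set of measure $O(\veps^{2m})$ (controlled in $L^2$, giving $O(\veps^m)$). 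Dividing by $\veps^{2m}$ and pulling out $\veps^{-(2n-m)}$ then leaves a residual contribution to $\Pi_\veps$ which is $O(\veps^{2m-2n})\cdot(\text{bounded in }L^1+L^\gamma)$, hence uniformly bounded since $2n\le m+1$ keeps all these exponents non-negative; here is exactly where the hypothesis $2n\le m+1$ enters. When $\gamma\ge2$ one uses Remark \ref{r:g>2} and \eqref{est:res_g>2} to see that $[\vrho_\veps^{(1)}]_\res$ is even bounded in $L^\infty_T(L^2)$, so the $L^\gamma$ space is not needed.

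Assembling the three contributions (the essential nonlinear part, the residual part, and the $(p'(\wtilde\vrho_\veps)-p'(1))\vrho_\veps^{(1)}$ correction) into $\Pi_\veps$ and noting that all of them carry at least a factor $\veps^{2n-m}$ after the required rescaling, I obtain \eqref{p_decomp_lemma} with $\Pi_\veps$ bounded in $L^\infty_T(L^1+L^2+L^\gamma)$, and in $L^\infty_T(L^1+L^2)$ when $\gamma\ge2$. The main obstacle, and the step I would spend the most effort on, is the careful bookkeeping on the residual set: one must track the exact powers of $\veps$ produced by each Hölder split and check they never become negative under the standing constraint $m+1\ge2n>m\ge1$, since it is precisely the endpoint $2n=m+1$ that makes the residual linear term only $O(1)$ rather than $o(1)$ after rescaling—consistent with the $O(1)$ bilinear term flagged in the introduction. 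Everything else is a routine Taylor expansion combined with the uniform bounds already recorded in Section \ref{ss:unif-est} and Proposition \ref{p:target-rho_bound}.
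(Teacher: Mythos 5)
Your decomposition is exactly the one used in the paper: you isolate the quadratic Taylor remainder $p(\vrho_\veps)-p(\wtilde{\vrho}_\veps)-p'(\wtilde{\vrho}_\veps)(\vrho_\veps-\wtilde{\vrho}_\veps)$, the correction $\big(p'(\wtilde{\vrho}_\veps)-p'(1)\big)\vrho_\veps^{(1)}$ and the leading term $p'(1)\vrho_\veps^{(1)}$, and you estimate the first two by the same combination of Taylor expansions, Proposition \ref{p:target-rho_bound}, the essential/residual splitting and the bounds \eqref{est:rho_ess}--\eqref{est:rho_res} and \eqref{uni_varrho1}; the resulting $\Pi_\veps$ is the paper's one. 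So the strategy is sound and matches the paper's proof.

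There are, however, two concrete errors in your residual bookkeeping. First, the bounded part of $\left[\vrho_\veps-\wtilde{\vrho}_\veps\right]_\res$ must be measured in $L^1$ (a bounded function supported on a set of measure $O(\veps^{2m})$ has $L^1$ norm $O(\veps^{2m})$), not in $L^2$ as you propose: with your $L^2$ bound of size $O(\veps^{m})$, the corresponding contribution to $\Pi_\veps$ is $\veps^{2n-m}\cdot\veps^{-2m}\cdot O(\veps^{m})\,=\,O\big(\veps^{2(n-m)}\big)$, which blows up because $n<m$. All the residual pieces should be collected in $L^\infty_T(L^1)$ with total size $O(\veps^{2m})$ (this is the content of Lemma 4.1 of \cite{F_2019}, which the paper invokes), after which the quadratic and residual remainders contribute $O(\veps^{2n-m})$ to $\Pi_\veps$ --- not $O(\veps^{2m-2n})$ as you write. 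Second, and relatedly, the hypothesis $2n\leq m+1$ plays no role in this lemma: non-negativity of the relevant exponents only requires $2n\geq m$ (for the remainders just discussed) and $m\geq n$ (so that $\wtilde{\vrho}_\veps-1=O(\veps^{2(m-n)})$ makes the correction term exactly $O(1)$ after division by $\veps^{2(m-n)}$). The upper bound $2n\leq m+1$ enters only later, in the compensated compactness argument of Section \ref{sss:term2}, where the bilinear term carrying the prefactor $\veps^{m+1-2n}$ appears; your claim that ``here is exactly where the hypothesis $2n\le m+1$ enters'', and the associated remark about the endpoint $2n=m+1$, are misattributions. Neither error is fatal --- correcting the Lebesgue exponent and the power count restores the paper's bound --- but as written the estimate for that one residual piece does not close.
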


\begin{proof}
We start by writing simple algebraic computations:
\begin{equation}\label{rel_p}
\begin{split}
\frac{1}{\veps^{2m}}\,\nabla_x\Big(p(\vrho_\veps)\,-\,p(\wtilde{\vrho}_\veps)\Big)\,&=\,\frac{1}{\veps^{2m}}\,\nabla_x\Big(p(\vrho_\veps)\,-\,p(\wtilde{\vrho}_\veps)-p^\prime(\widetilde{\vrho}_\veps)(\vrho_\veps -\widetilde{\vrho}_\veps ) \Big)\\
&\qquad\qquad+\,\frac{1}{\veps^{m}}\,\nabla_x\Big(\big(p^\prime(\widetilde{\vrho}_\veps)\,-\,p'(1)\big)\,\vr_\veps^{(1)}\Big)\,+\,
\frac{1}{\veps^m}\nabla_x \Big(p^\prime (1)\vrho_\veps^{(1)}\Big)\,.
\end{split}
\end{equation}

We start by analysing the first term on the right-hand side of \eqref{rel_p}. For the essential part, we can employ a Taylor expansion
to write
$$
\left[p(\vrho_\veps)-p(\widetilde{\vrho}_\veps)-p^\prime(\widetilde{\vrho}_\veps)(\vrho_\veps-\widetilde{\vrho}_\veps)\right]_\ess=\left[p^{\prime \prime}(z_\veps )(\vrho_\veps - \widetilde{\vrho}_\veps)^2\right]_\ess\, ,
$$
where $z_\veps$ is a suitable point between $\vrho_\veps$ and $\widetilde{\vrho}_\veps$. Thanks to the uniform bound \eqref{est:rho_ess}, we have that this term is of order $O(\veps^{2m})$ in $L^\infty_T(L^1)$, for any $T>0$ fixed. For the residual part, we can use \eqref{est:M_res-measure} and \eqref{est:rho_res}, together with the boundedness of the profiles $\wtilde\vr_\veps$ (keep in mind Proposition \ref{p:target-rho_bound}), to deduce that
\[
\left\|\left[p(\vrho_\veps)-p(\widetilde{\vrho}_\veps)-p^\prime(\widetilde{\vrho}_\veps)(\vrho_\veps-\widetilde{\vrho}_\veps)\right]_\res\right\|_{L^\infty_T(L^1)}\,\leq C\,\veps^{2m}\,.
\]
We refer to \tsl{e.g.} Lemma 4.1 of \cite{F_2019} for details.

In a similar way, a Taylor expansion for the second term on the right-hand side of \eqref{rel_p} gives
$$
\big( p^\prime(\widetilde{\vrho}_\veps)-p^\prime(1)\big)\vrho^{(1)}_\veps\,=\,p''(\eta_\veps )( \widetilde{\vrho}_\veps -1)\vrho^{(1)}_\veps
$$
where $\eta_\veps$ is a suitable point between $\widetilde{\vrho}_\veps$ and 1. Owing to Proposition \ref{p:target-rho_bound} again and to bound \eqref{uni_varrho1},
we infer that this term is of order $O(\veps^{2(m-n)})$ in $L^\infty_T(L^2+L^\g)$, for any time $T>0$ fixed. Then, defining 
$$ \Pi_\veps:=\frac{1}{\veps^{2(m-n)}}\left[\frac{p(\vrho_\veps )-p(\widetilde{\vrho}_\veps)}{\veps^m}-p^\prime (1)\vrho_\veps^{(1)}\right] $$
we have the control \eqref{unif-bound-Pi}.

The final statement concerning the case $\g\geq2$ easily follows from Remark \ref{r:g>2}.
This completes the proof of the lemma. 
\qed
\end{proof}

\begin{remark} \label{r:pressure}
Notice that the last term appearing in \eqref{p_decomp_lemma} is singular in $\veps$. This is in stark contrast with the situation considered in previous works, see
\tsl{e.g.} \cite{F-G-N}, \cite{F-G-GV-N}, \cite{F-N_CPDE} and \cite{F_2019}. However, its gradient structure will play a fundamental role in the computations below.
\end{remark}

This having been pointed out, we can now analyse the constraints on the weak-limit points $\big(\vr^{(1)},\vec U\big)$, identified in relations \eqref{conv:u} and \eqref{conv:rr} above. 

\subsubsection{The case of large values of the Mach number: $m>1$} \label{ss:constr_2}

We start by considering the case of anisotropic scaling, namely $m>1$ and $m+1\geq 2n>m$. Notice that, in particular, one has $m>n$.

\begin{proposition} \label{p:limitpoint}
Let $m>1$ and $m+1\geq 2n>m$ in \eqref{ceq}--\eqref{meq}.
Let $\left( \vre, \ue \right)_{\veps}$ be a family of weak solutions, related to initial data $\left(\vrho_{0,\veps},\vec u_{0,\veps}\right)_\veps$
verifying the hypotheses of Section \ref{sss:data-weak}. Let $(\vrho^{(1)}, \vec{U} )$ be a limit point of the sequence
$\left(\vrho_\veps^{(1)}, \ue\right)_{\veps}$, as identified in Section \ref{ss:unif-est}. Then
\begin{align}
&\vec{U}\,=\,\,\Big(\vec{U}^h\,,\,0\Big)\,,\qquad\qquad \mbox{ with }\qquad \vec{U}^h\,=\,\vec{U}^h(t,x^h)\quad \mbox{ and }\quad \div_{\!h}\,\vec{U}^h\,=\,0\,,  \label{eq:anis-lim_1} \\[1ex]
&\nabla_x \vrho^{(1)}\,=\, 0
\qquad\qquad\mbox{ in }\;\,\mc D^\prime(\R_+\times \Omega)\,. \label{eq:anis-lim_2} 
\end{align}
\end{proposition}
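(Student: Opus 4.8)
The plan is to test the weak momentum equation \eqref{weak-mom} against suitably chosen test functions and exploit the singular prefactors $1/\ep$, $1/\ep^{2m}$ and $1/\ep^{2n}$, using the uniform bounds from Section \ref{ss:unif-est} to control the non-singular contributions. To organize things, I would first rewrite the singular terms appearing in \eqref{weak-mom} by combining the pressure and the gravity force via the equilibrium relation \eqref{prF}. Concretely, since $\nabla_x p(\wtilde\vr_\veps)=\ep^{2(m-n)}\wtilde\vr_\veps\nabla_x G$, the quantity
\[
-\frac{1}{\ep^{2m}}\nabla_x p(\vre)+\frac{1}{\ep^{2n}}\vre\nabla_x G
\;=\;-\frac{1}{\ep^{2m}}\nabla_x\bigl(p(\vre)-p(\wtilde\vr_\veps)\bigr)+\frac{1}{\ep^{2n}}(\vre-\wtilde\vr_\veps)\nabla_x G
\]
can be treated using Lemma \ref{lem:manipulation_pressure}: the first piece becomes $-\ep^{-m}\nabla_x(p'(1)\vr_\veps^{(1)})-\ep^{-(2n-m)}\nabla_x\Pi_\veps$, while the second piece is $\ep^{m-2n}\vr_\veps^{(1)}\nabla_x G$, which by \eqref{uni_varrho1} is $O(\ep^{m-2n})$ in $L^\infty_T(L^2+L^\g)$, hence after multiplication survives only at order $\ep^{m-2n}$ — but crucially it pairs against a \emph{gradient} term if we anticipate testing against gradients. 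Rather than forcing the balance, the cleanest route is: test \eqref{weak-mom} against $\ep\,\vec\psi$ for a general $\vec\psi$, so that all genuinely singular terms are of the Coriolis type $\e_3\times(\vre\ue)$ (order $O(1)$ after the $\ep$ factor) plus pressure/gravity terms which, after multiplication by $\ep$, become $O(\ep^{1-2m+\cdots})$; then one extracts the leading-order constraint.

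\textbf{Step 1: Deriving $\div_x\vec U=0$ and verticality.} Multiply \eqref{weak-mom} by $\ep^{2m}$ and let $\ep\to0$. All terms carrying $\vre\ue$, $\vre\ue\otimes\ue$, $\mbb S(\nabla_x\ue)$ and the Coriolis term are bounded in appropriate norms (by \eqref{est:momentum}, \eqref{unif-bound-for-vel}, \eqref{est:Du}), hence vanish after multiplication by $\ep^{2m}$ since $2m>0$. What remains is $\int_0^T\!\!\int_\Omega p(\vre)\div\vec\psi=\int_0^T\!\!\int_\Omega\bigl(p(\vre)-p(\wtilde\vr_\veps)\bigr)\div\vec\psi+\int_0^T\!\!\int_\Omega p(\wtilde\vr_\veps)\div\vec\psi$; using \eqref{rr1} and $p(\wtilde\vr_\veps)\to p(1)$ one needs the test function to satisfy $(\vec\psi\cdot\n)_{|\d\Omega}=0$ so that $\int_\Omega\div\vec\psi=0$ against the constant $p(1)$, and one obtains no information at this order — so instead I multiply only by $\ep^{2m-?}$. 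The correct bookkeeping: the most singular surviving term is $\ep^{-2m}\nabla_x(p(\vre)-p(\wtilde\vr_\veps))$, which by Lemma \ref{lem:manipulation_pressure} equals $\ep^{-m}\nabla_x(p'(1)\vr_\veps^{(1)})+\ep^{-(2n-m)}\nabla_x\Pi_\veps$. Since $2n-m\le 1\le m$ and $m>1$, the dominant singular order among $\{m,\,2n-m,\,1\}$ is $m$ (because $m>1\ge 2n-m$), so multiply \eqref{weak-mom} by $\ep^m$: then the pressure gradient term $\ep^m\cdot\ep^{-m}\nabla_x(p'(1)\vr_\veps^{(1)})$ contributes $p'(1)\int_0^T\!\!\int_\Omega\vr^{(1)}\div\vec\psi$ in the limit (using \eqref{conv:rr}), the term $\ep^m\cdot\ep^{-(2n-m)}\nabla_x\Pi_\veps$ is $O(\ep^{2m-2n})\to0$, the gravity term $\ep^m\cdot\ep^{m-2n}\vr_\veps^{(1)}\nabla_x G$ is $O(\ep^{2m-2n})\to 0$, the Coriolis term $\ep^m\cdot\ep^{-1}\e_3\times(\vre\ue)$ is $O(\ep^{m-1})\to0$ since $m>1$, and all other terms carry at least $\ep^m$ and vanish. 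Thus $\int_0^T\!\!\int_\Omega\vr^{(1)}\div\vec\psi\,\dxdt=0$ for all admissible $\vec\psi$, which gives \eqref{eq:anis-lim_2}, i.e. $\nabla_x\vr^{(1)}=0$ in $\mc D'$.

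\textbf{Step 2: Structure of $\vec U$.} For \eqref{eq:anis-lim_1}, I would instead multiply \eqref{weak-mom} by $\ep$ and pass to the limit. The only $O(1)$ terms are the Coriolis term $\int_0^T\!\!\int_\Omega\e_3\times(\vre\ue)\cdot\vec\psi$ and the singular pressure/gravity part $\ep\cdot\ep^{-m}\nabla_x(p'(1)\vr_\veps^{(1)})+\ep\cdot\ep^{-(2n-m)}\nabla_x\Pi_\veps$; the first of these pressure pieces is $O(\ep^{1-m})\to 0$ and the second is $O(\ep^{1-2n+m})=O(\ep^{1-(2n-m)})$, which tends to $0$ iff $2n-m<1$ and is $O(1)$ (a pure gradient!) when $2n=m+1$. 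Since $\vre\ue\rightharpoonup\vec U$ (from \eqref{conv:u} together with $\vre\to1$, which upgrades to $\vre\ue\rightharpoonup\vec U$ by the usual compactness argument), passing to the limit yields $\int_0^T\!\!\int_\Omega\e_3\times\vec U\cdot\vec\psi\,\dxdt=\int_0^T\!\!\int_\Omega\nabla_x\Pi\cdot\vec\psi\,\dxdt$ for some distribution $\Pi$; i.e. $\e_3\times\vec U=\nabla_x\Pi$ in $\mc D'$. Writing this out componentwise, $\e_3\times\vec U=(-U^2,U^1,0)$, so $-U^2=\d_1\Pi$, $U^1=\d_2\Pi$, $0=\d_3\Pi$; hence $\Pi$ is independent of $x^3$, and taking $\curl_h$ of the horizontal part gives $\div_h\vec U^h=0$, while the vanishing third component forces... one still needs $\vec U^3=0$ and $\vec U^h$ independent of $x^3$. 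For these I would use the limit of the mass equation \eqref{weak-con}: multiplying by $\ep^{-m}$ (legitimate since $\vre-\wtilde\vr_\veps=\ep^m\vr_\veps^{(1)}$ and $\wtilde\vr_\veps$ is time-independent) gives $\d_t\vr^{(1)}+\div_x\vec U=0$ in $\mc D'$; combined with $\nabla_x\vr^{(1)}=0$ from Step 1 (so $\d_t\vr^{(1)}$ is constant in $x$, and integrating, $=0$) we get $\div_x\vec U=0$. Then $0=\d_3\Pi$ plus $\e_3\times\vec U=\nabla_x\Pi$ already gives the horizontal components of $\vec U$ are $x^3$-independent; the relation $\div_x\vec U=\div_h\vec U^h+\d_3 U^3=0=\div_h\vec U^h$ then forces $\d_3 U^3=0$, so $U^3=U^3(t,x^h)$, and combined with the boundary condition $U^3_{|x^3=0,1}=0$ (which passes to the limit from $(\ue\cdot\n)_{|\d\Omega}=0$) yields $U^3\equiv0$. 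This establishes \eqref{eq:anis-lim_1}.

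\textbf{Main obstacle.} The delicate point is \textbf{Step 2}, specifically the treatment of the gradient term $\ep^{-(2n-m)}\nabla_x\Pi_\veps$ in the endpoint case $2n=m+1$, where it does \emph{not} vanish and produces a genuine $O(1)$ pressure-type contribution. One must argue that it remains a distributional gradient (so it is absorbed into the Lagrange multiplier $\Pi$ associated with the constraint $\e_3\times\vec U=\nabla_x\Pi$) and does not spoil the algebraic deductions — the componentwise analysis above only uses that the right-hand side is a gradient, which is robust, so this is fine here, but it foreshadows the real difficulty in Sections \ref{s:proof}--\ref{s:proof-1} where this term must actually be shown to disappear in the \emph{limit equation} via finer oscillation control. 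A secondary technical point is justifying $\vre\ue\rightharpoonup\vec U$: this is standard given $\vre\to1$ strongly in $L^\infty_T(L^{\min\{2,\g\}}_{\rm loc})$ (from \eqref{rr1}) and $\ue\rightharpoonup\vec U$ weakly in $L^2_T(H^1)$, but one must be slightly careful with the residual set where $\vre$ may be large, handled by \eqref{est:M_res-measure}--\eqref{est:rho_res} exactly as in \cite{F-G-N}.
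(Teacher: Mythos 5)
Your Step 1 (testing the momentum equation against $\veps^m\,\vec\phi$ to obtain \eqref{eq:anis-lim_2}) follows the paper's argument and is correct. Step 2, however, contains a genuine error and an omission. First, after multiplying \eqref{weak-mom} by $\veps$, the pressure piece $\veps\cdot\veps^{-m}\,\nabla_x\big(p'(1)\,\vrho^{(1)}_\veps\big)\,=\,\veps^{1-m}\,\nabla_x\big(p'(1)\,\vrho^{(1)}_\veps\big)$ is \emph{not} ``$O(\veps^{1-m})\to0$'': since $m>1$ one has $\veps^{1-m}\to+\infty$, so this term is genuinely singular, and no uniform bound from Section \ref{ss:unif-est} makes it vanish (the weak convergence $\nabla_x\vrho_\veps^{(1)}\rightharpoonup0$ carries no rate). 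The only way to dispose of it --- and this is what the paper does --- is to test against divergence-free fields $\vec\phi=\curl\vec\psi$, so that \emph{every} gradient is annihilated identically at fixed $\veps$, not merely in the limit; the limit relation is then $\h\big(\e_3\times\vU\big)=0$, i.e.\ $\e_3\times\vU=\nabla_x\Phi$. Your subsequent componentwise analysis implicitly assumes exactly this conclusion, so the deduction is repairable, but the size argument you give for discarding the term is wrong.

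Second, your bookkeeping in Step 2 omits the gravity remainder: after combining pressure and gravity via \eqref{prF}, the term $\veps^{m-2n}\,\vrho_\veps^{(1)}\nabla_x G$ remains, and after multiplication by $\veps$ it becomes $\veps^{m+1-2n}\,\vrho_\veps^{(1)}\nabla_x G$, which is $O(1)$ in the endpoint case $2n=m+1$. Unlike $\nabla_x\Pi_\veps$, this is \emph{not} a gradient at fixed $\veps$, so it is not killed by divergence-free test functions; it survives in the limit as $\vrho^{(1)}\nabla_x G$, and one must invoke \eqref{eq:anis-lim_2} (already established in Step 1) to recognise $\vrho^{(1)}\nabla_x G=\nabla_x\big(\vrho^{(1)}G\big)$ as a gradient and absorb it into $\nabla_x\Phi$. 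The paper devotes a separate paragraph to precisely this case; your ``main obstacle'' discussion addresses only the $\Pi_\veps$ gradient, which is the harmless one. A minor further point: $\div\vU=0$ does not follow from ``multiplying the mass equation by $\veps^{-m}$'' (the flux term then becomes the singular quantity $\veps^{-m}\div(\vrho_\veps\vu_\veps)$, whose limit is not $\div\vU$); it follows directly from the unscaled mass equation, using $\vrho_\veps\to1$ strongly and $\vrho_\veps\vu_\veps\rightharpoonup\vU$, as in the paper.
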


\begin{proof} First of all, let us consider the weak formulation of the mass equation \eqref{ceq}. Take a test function
$\varphi\in C_c^\infty\bigl(\R_+\times\Omega\bigr)$  and denote $[0,T]\times K\,:=\,{\rm supp} \, \varphi$. Then by \eqref{weak-con} we have
$$
-\int^T_0\int_K\bigl(\vrho_\veps-1\bigr)\,\d_t\varphi \dxdt\,-\,\int^T_0\int_K\vrho_\veps\,\vec{u}_\veps\,\cdot\,\nabla_{x}\varphi \dxdt\,=\,
\int_K\bigl(\vrho_{0,\veps}-1\bigr)\,\varphi(0,\,\cdot\,)\dx\,.
$$
We can easily pass to the limit in this equation, thanks to the strong convergence $\vrho_\veps\longrightarrow1$, provided by \eqref{rr1}, and the weak convergence of
$\vec{u}_\veps$ in $L_T^2\bigl(L^6_{\rm loc}\bigr)$, provided by \eqref{conv:u} and Sobolev embeddings. Notice that one always has $1/\g\,+\,1/6\,\leq\,1$. In this way,
we find
 
\begin{equation}\label{001_U}
-\,\int^T_0\int_K\vec{U}\,\cdot\,\nabla_{x}\varphi \dxdt\,=\,0
\end{equation}
for $\varphi$ taken as above. Since the choice of $\varphi$ is arbitrary, 
we obtain that
\begin{equation} \label{eq:div-free}
\div \U = 0 \qquad\qquad\mbox{ a.e. in }\; \,\R_+\times \Omega\,.
\end{equation}

Next, we test the momentum equation \eqref{meq} on $\veps^m\,\vec\phi$, for a smooth compactly supported $\vec\phi$.
Using the uniform bounds established in Section \ref{ss:unif-est}, it is easy to see that the term presenting the time derivative, the viscosity term and the convective
term all converges to $0$, in the limit $\veps\ra0^+$. Since $m>1$, also the Coriolis term vanishes when $\veps\ra0^+$. It remains us to consider
the pressure and gravity terms in the weak formulation \eqref{weak-mom} of the momentum equation:
using relation \eqref{prF}, we see that we can couple them to write
\begin{align}
\frac{1}{\veps^{2m}}\,\nabla_x p(\vrho_\veps)-\, \frac{1}{\veps^{2n}}\,\vrho_\veps\nabla_x G\,=\,\frac{1}{\veps^{2m}}\nabla_x\Big(p(\vrho_\veps)\,-\,p(\wtilde{\vrho}_\veps)\Big)-\,\veps^{m-2n}\vrho_\veps^{(1)}\nabla_x G\,. \label{eq:mom_rest_1}
\end{align}
By \eqref{uni_varrho1} and the fact that $m>n$, we readily see that the last term in the right-hand side of \eqref{eq:mom_rest_1} converges to $0$,
when tested against any smooth compactly supported $\veps^m\,\vec\phi$.
At this point, we use Lemma \ref{lem:manipulation_pressure} to treat the first term on the right-hand side of \eqref{eq:mom_rest_1}. 
So, taking $\vec\phi \in C^\infty_c([0,T[\, \times \Omega)$ (for some $T>0$), we
test the momentum equation against $\veps^m\,\vec\phi$: using \eqref{conv:rr}, in the limit $\veps\ra0^+$ we find that 
$$ \int_0^T \int_\Omega p'(1) \vr^{(1)} \div  \vec\phi \dxdt = 0\,.$$
Recalling that $p^\prime (1)=1$, the previous relation implies \eqref{eq:anis-lim_2} for $\vr^{(1)}$.
In particular, that relation implies that $\vrho^{(1)}(t,x)\,=\,c(t)$ for almost all $(t,x)\in\R_+\times\Omega$, for a suitable function $c=c(t)$ depending only on time.

Now, in order to see effects due to the fast rotation in the limit, we need to ``filter out'' the contribution coming from the low Mach number.
To this end, we test \eqref{meq} on $\veps\,\vec\phi$, where this time we take $\vec\phi\,=\,\curl\vec\psi$, for some smooth compactly supported $\vec\psi\,\in C^\infty_c\bigl([0,T[\,\times\Omega\bigr)$, with $T>0$.
Once again, by uniform bounds we infer that the $\d_t$ term, the convective term and the viscosity term all converge to $0$ when $\veps\ra0^+$.
As for the pressure and the gravitational force, we argue as in \eqref{eq:mom_rest_1}. Since the structure of $\vec\phi$ kills any gradient term, we are left with the convergence
of the integral
$$
\int^T_0\int_\Omega\veps^{m-2n+1}\vrho_\veps^{(1)}\nabla_x G\cdot\vec\phi\,\dx\,dt\,\longrightarrow\,
\delta_0(m-2n+1)\int^T_0\int_\Omega\vrho^{(1)}\nabla_x G\cdot\vec\phi\,\dx\,dt\,,
$$
where $\de_0(\zeta)\,=\,1$ if $\zeta=0$, $\de_0(\zeta)\,=\,0$ otherwise.
Finally, arguing as done for the mass equation, we see that the Coriolis term converges to the integral $\int^T_0\int_\Omega\e_3\times\vec{U}\cdot\vec\phi$.

Consider the case $m+1>2n$ for a while.
Passing to the limit for $\veps\ra0^+$, we find that $\mbb{H}\left(\e_3\times\vec{U}\right)\,=\,0$, which implies that
$\e_3\times\vec{U}\,=\,\nabla_x\Phi$, for some potential function $\Phi$. From this relation, one easily deduces that $\Phi=\Phi(t,x^h)$, \tsl{i.e.} $\Phi$ does not depend
on $x^3$, and that the same property is inherited by $\vec{U}^h\,=\,\bigl(U^1,U^2\bigr)$, \tsl{i.e.} one has $\vec{U}^h\,=\,\vec{U}^h(t,x^h)$. Furthermore, since $\vec U^h\,=\,-\,\nabla^\perp\Phi$, we get that $\div_{\!h}\,\vec{U}^h\,=\,0$.
At this point, we
combine this fact with \eqref{eq:div-free} to infer that $\d_3 U^3\,=\,0$; but, thanks to the boundary condition
\eqref{bc1-2}, we must have $\bigl(\vec{U}\cdot\vec{n}\bigr)_{|\d\Omega}\,=\,0$, which implies that $U^3$ has to vanish at the boundary of $\Omega$.
Thus, we finally deduce that $U^3\,\equiv\,0$, whence \eqref{eq:anis-lim_1} follows.

Now, let us focus on the case when $m+1=2n$. The previous computations show that, when $\veps\ra0^+$, we get
\begin{equation}\label{eq:streamfunction_1} 
\vec{e}_{3}\times \vec{U}+\vrho^{(1)}\nabla_x G\,=\,\nabla_x\Phi \qquad\qquad\mbox{ in }\; \mc D^\prime(\R_+\times \Omega)\,,
\end{equation}
for a new suitable function $\Phi$. However, owing to \eqref{eq:anis-lim_2}, we see that $\vrho^{(1)}\nabla_x G\,=\,\nabla_x\big(\vr^{(1)}\,G\big)$; hence, the previous relations
can be recasted as $\e_3\times\vec U\,=\,\nabla_x\wtilde\Phi$, for a new scalar function $\wtilde\Phi$. Therefore, the same analysis as above applies,
allowing us to gather \eqref{eq:anis-lim_1} also in the case $m+1=2n$.
\qed
\end{proof}

\subsubsection{The case $m=1$} \label{ss:constr_1}

Now we focus on the case $m=1$. In this case, the fast rotation and weak compressibility
effects are of the same order: this allows to reach the so-called \emph{quasi-geostrophic balance} in the limit.


\begin{proposition}  \label{p:limit_iso}
Take $m=1$ and $1/2<n<1$ in system \eqref{ceq}--\eqref{meq}.
Let $\left( \vre, \ue\right)_{\veps}$ be a family of weak solutions to \eqref{ceq}--\eqref{meq}, associated with initial data
$\left(\vrho_{0,\veps},\vec u_{0,\veps}\right)$ verifying the hypotheses fixed in Section \ref{sss:data-weak}.
Let $(\vrho^{(1)}, \vec{U} )$ be a limit point of the sequence $\left(\vrho^{(1)}_{\veps} , \ue\right)_{\veps}$, as identified in Section \ref{ss:unif-est}.
Then,
\begin{align}
\vr^{(1)}\,=\,\vr^{(1)}(t,x^h)\qquad\mbox{ and }\qquad\vec{U}\,=\,\,\Big(\vec{U}^h\,,\,0\Big)\,,\qquad \mbox{ with }\quad 
\vec{U}^h\,=\,\nabla^\perp_h \vrho^{(1)}
\;\mbox{ a.e. in }\;\R_+ \times \R^2\,.  \label{eq:for q}
\end{align}
In particular, one has $\vec U^h\,=\,\vec U^h(t,x^h)$ and $\div_{\!h}\vec U^h\,=\,0$.
\end{proposition}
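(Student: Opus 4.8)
The plan is to mirror the structure of the proof of Proposition \ref{p:limitpoint}, exploiting the fact that in the isotropic case $m=1$ the Coriolis term survives at the \emph{same} order as the pressure gradient, so that testing the momentum equation against $\veps\,\vec\phi$ directly couples rotation and compressibility. First I would recall the constraint on the mass equation exactly as in the proof of Proposition \ref{p:limitpoint}: testing \eqref{weak-con} with $\varphi\in C_c^\infty(\R_+\times\Omega)$ and using the strong convergence $\vrho_\veps\to1$ from \eqref{rr1} together with the weak convergence $\vu_\veps\rightharpoonup\vec U$ in $L^2_T(L^6_{\rm loc})$ from \eqref{conv:u} and Sobolev embedding, one gets $\div\vec U=0$ a.e.\ in $\R_+\times\Omega$.

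Next I would test the momentum equation \eqref{weak-mom} against $\veps\,\vec\phi$ for $\vec\phi\in C^\infty_c([0,T[\,\times\Omega;\R^3)$. By the uniform bounds of Section \ref{ss:unif-est}, the $\d_t$ term, the convective term and the viscous term all vanish as $\veps\to0^+$ (they carry positive powers of $\veps$). For the pressure and gravity terms I would use the coupling \eqref{eq:mom_rest_1} (valid since $m>n$ here as well) together with Lemma \ref{lem:manipulation_pressure}: with $m=1$ the decomposition \eqref{p_decomp_lemma} reads $\veps^{-2}\nabla_x(p(\vrho_\veps)-p(\wtilde\vrho_\veps))=\veps^{-1}\nabla_x\vrho_\veps^{(1)}+\veps^{2-2n}\nabla_x\Pi_\veps$, and multiplying by $\veps$ the first contribution becomes $\nabla_x\vrho_\veps^{(1)}$, which is $O(1)$, while the $\Pi_\veps$-term carries $\veps^{3-2n}$ and the gravity remainder $\veps^{1+1-2n}=\veps^{2-2n}$ — both vanish because $n<1$. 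The Coriolis term $\frac1\veps\,\e_3\times(\vrho_\veps\vu_\veps)$ tested against $\veps\,\vec\phi$ converges to $\int\e_3\times\vec U\cdot\vec\phi$, arguing as for the mass equation (write $\vrho_\veps\vu_\veps=(\vrho_\veps-1)\vu_\veps+\vu_\veps$ and use $\vrho_\veps-1\to0$ strongly plus $\vu_\veps\rightharpoonup\vec U$ weakly). Passing to the limit yields
\begin{equation*}
\e_3\times\vec U+\nabla_x\vrho^{(1)}=0\qquad\mbox{ in }\;\mc D'(\R_+\times\Omega)\,.
\end{equation*}
Reading this component by component gives $\vec U^h=\nabla_h^\perp\vrho^{(1)}$, $\d_1\vrho^{(1)}=\d_2\vrho^{(1)}=0$ forcing nothing yet, but the third component gives $\d_3\vrho^{(1)}=0$, so $\vrho^{(1)}=\vrho^{(1)}(t,x^h)$; consequently $\vec U^h=\vec U^h(t,x^h)$ and $\div_h\vec U^h=\div_h\nabla_h^\perp\vrho^{(1)}=0$. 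Combining $\div_h\vec U^h=0$ with $\div\vec U=0$ gives $\d_3U^3=0$, and the boundary condition $(\vec U\cdot\n)_{|\d\Omega}=0$ from \eqref{bc1-2} then forces $U^3\equiv0$, which is exactly \eqref{eq:for q}.

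The only genuinely delicate point, compared with the $m>1$ case, is that here there is no need to separately derive $\nabla_x\vrho^{(1)}=0$ first and then filter: the single relation above already encodes both the quasi-geostrophic link $\vec U^h=\nabla_h^\perp\vrho^{(1)}$ and the vertical independence. The main obstacle I expect is bookkeeping the powers of $\veps$ in the pressure decomposition so as to confirm that \emph{every} term other than $\nabla_x\vrho_\veps^{(1)}$ and the Coriolis term is $o(1)$ when tested against $\veps\,\vec\phi$ — this is where the hypothesis $n<1$ (hence $2-2n>0$) is used in an essential way, and it is worth checking that the worst term is precisely the gravity remainder $\veps^{2-2n}\vrho_\veps^{(1)}\nabla_x G$, controlled by \eqref{uni_varrho1} in $L^\infty_T(L^2+L^\gamma)$. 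The rest is a routine repetition of the weak-convergence arguments already used in the proof of Proposition \ref{p:limitpoint}.
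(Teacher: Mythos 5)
Your proposal is correct and follows essentially the same route as the paper: pass to the limit in the mass equation, test the momentum equation against $\veps\,\vec\phi$, use Lemma \ref{lem:manipulation_pressure} and \eqref{prF} to reduce everything to $\e_3\times\vec U+\nabla_x\vrho^{(1)}=0$, and then read off \eqref{eq:for q} exactly as in Proposition \ref{p:limitpoint}. The only blemish is a one-off slip in the exponents for the $\Pi_\veps$ contribution (with $m=1$ the lemma gives $\veps^{1-2n}\nabla_x\Pi_\veps$, hence $\veps^{2-2n}$ after multiplying by $\veps$, not $\veps^{3-2n}$), which is harmless since both exponents are positive for $n<1$.
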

\begin{proof} Arguing as in the proof of Proposition \ref{p:limitpoint}, it is easy to pass to the limit in the continuity equation. In particular, we obtain again relation \eqref{eq:div-free} for $\vec U$.

Only the analysis of the momentum equation changes a bit with respect to the previous case $m>1$. Now, since the most singular terms are of order $\veps^{-1}$ (keep in mind Lemma \ref{lem:manipulation_pressure}), we test
the weak formulation \eqref{weak-mom} of the momentum equation against $\veps\,\vec\phi$, where $\vec \phi$ is a smooth compactly supported function. Similarly to what done above, the uniform bounds of Section \ref{ss:unif-est} allow us to infer that the only quantity which does not vanish in the limit is the sum of the terms involving the Coriolis force, the pressure and the gravitational force: more precisely, using also Lemma \ref{lem:manipulation_pressure} and \eqref{prF}, we have
$$
\vec{e}_{3}\times \vrho_{\veps}\ue\,+\frac{\nabla_x \Big( p(\vrho_\veps)-p(\widetilde{\vrho}_\veps)\Big)}{\veps}\,-\,
\veps^{2(1-n)}\vrho_\veps^{(1)}\nabla_x G\,=\,\mc O(\veps)
$$
in the sense of $\mc D'(\R_+\times\Omega)$.
Following the same computations performed in the proof of Proposition \ref{p:limitpoint}, in the limit $\veps\ra0^+$ it is easy to get that
$$ 
\vec{e}_{3}\times \vec{U}+\nabla_x\left(p^\prime (1) \vrho^{(1)}\right)\,=\,0\qquad\qquad\mbox{ in }\; \mc D'\big(\R_+\times \Omega\big)\,.
$$ 
After recalling that $p^\prime (1)=1$, this equality can be equivalently written as 
$$ 
\vec{e}_{3}\times \vec{U}+\nabla_x \vrho^{(1)}\,=\,0 \qquad\qquad\mbox{ a.e. in }\; \R_+ \times \Omega\,.
$$ 
Notice that $\vec U$ is in fact in $L^2_{\rm loc}(\R_+;L^2)$, therefore so is $\nabla_x \vr^{(1)}$; hence the previous relation is in fact satisfied almost everywhere
in $\R_+\times\Omega$.

At this point, we can repeat the same argument used in the proof of Proposition \ref{p:limitpoint} to deduce \eqref{eq:for q}.
The proposition is thus proved.
\qed
\end{proof}

\section{Convergence in the case $m>1$}\label{s:proof}

In this section, we complete the proof of Theorem \ref{th:m>1}. Namely, we show convergence in the weak formulation of the primitive system, in the case when $m>1$ and $m+1\geq 2n>m$. 

In Proposition \ref{p:limitpoint}, we have already seen how passing to the limit in the mass equation.
However, problems arise when tackling the convergence in the momentum equation. Indeed, the analysis carried out so far
is not enough to identify the weak limit of the convective term $\vrho_\veps\,\vec u_\veps\otimes\vec u_\veps$, which is highly non-linear.
For proving that this term converges to the expected limit $\vec U\otimes\vec U$, the key point is to control the strong oscillations in time of the solutions,
generated by the singular terms in the momentum equation. For this, we will use a compensated compactness argument and exploit the algebraic structure of the wave system
underlying the primitive equations \eqref{ceq}--\eqref{meq}.

In Section \ref{ss:acoustic}, we start by giving a quite accurate description of those fast oscillations. 
Then, using that description, we are able, in Section \ref{ss:convergence}, to establish two fundamental properties: on the one hand, strong convergence of a suitable quantity related to the velocity fields; on the other hand, that the other terms which do not involve that quantity tend to vanish when $\veps\ra0^+$.
In turn, this allows us to complete, in Section \ref{ss:limit}, the proof of the convergence.

\subsection{Analysis of the acoustic waves} \label{ss:acoustic}

The goal of the present subsection is to describe the fast time oscillations of the solutions. First of all, we recast our equations into a wave system. Then, we establish uniform bounds for the quantities appearing in the wave system. Finally, we apply a regularisation
in space procedure for all the quantities, which is preparatory in view of the computations of Section \ref{ss:convergence}.

\subsubsection{Formulation of the acoustic equation} \label{sss:wave-eq}

We introduce the quantity
$$
\vec{V}_\veps\,:=\,\vrho_\veps\vec{u}_\veps\,.
$$
Then, straightforward computations show that we can recast the continuity equation in the form
\begin{equation} \label{eq:wave_mass}
\veps^m\,\d_t\vrho^{(1)}_\veps\,+\,\div\vec{V}_\veps\,=\,0\,,
\end{equation}
where $\vrho^{(1)}_\veps$ is defined in \eqref{def_deltarho}.
Next, thanks to Lemma \ref{lem:manipulation_pressure} and the static relation \eqref{prF}, we can derive the following form of the momentum equation:
\begin{align}
\veps^m\,\d_t\vec{V}_\veps\,+\,\veps^{m-1}\,\e_3\times \vec V_\veps\,+p^\prime(1)\,\nabla_x \vrho_{\veps}^{(1)}\,&=\,
\veps^{2(m-n)}\left(\vrho_\veps^{(1)}\nabla_x G\,-\,\nabla_x\Pi_\veps 
\right) \label{eq:wave_momentum} \\
&\qquad\qquad
+\,\veps^m\,\Big(\div\mbb{S}\!\left(\nabla_x\vec{u}_\veps\right)\,-\,\div\!\left(\vrho_\veps\vec{u}_\veps\otimes\vec{u}_\veps\right)
\Big)\,. \nonumber
\end{align}

Then, if we define
\begin{equation}\label{def_f-g}
\vec f_\veps :=\div\big(\mbb{S}\!\left(\nabla_x\vec{u}_\veps\right)\,-\,\vrho_\veps\vec{u}_\veps\otimes\vec{u}_\veps\big)\qquad \mbox{ and }\qquad
\vec g_\veps :=\vrho_\veps^{(1)}\nabla_x G\,-\,\nabla_x\Pi_\veps\,,
\end{equation}
recalling that we have normalised the pressure function so that $p^\prime (1)=1$, we can recast the primitive system \eqref{ceq}--\eqref{meq} in the following form:
\begin{equation} \label{eq:wave_syst}
\left\{\begin{array}{l}
       \veps^m\,\d_t \vrho^{(1)}_\veps\,+\,\div\vec{V}_\veps\,=\,0 \\[1ex]
       \veps^m\,\d_t\vec{V}_\veps\,+\,\nabla_x \vrho_\veps^{(1)}\,+\,\veps^{m-1}\,\e_3\times \vec V_\veps\,=\,\veps^m\,\vec f_\veps +\veps^{2(m-n)}\vec g_\veps\,.
       \end{array}
\right.
\end{equation}

We remark that system \eqref{eq:wave_syst} has to be read in the weak sense: for any $\varphi\in C_c^\infty\bigl([0,T[\,\times \oline\Omega\bigr)$, one has
$$
-\,\veps^m\,\int^T_0\int_{\Omega} \vrho^{(1)}_\veps\,\d_t\varphi\,-\,\int^T_0\int_{\Omega} \vec{V}_\veps\cdot\nabla_x\varphi\,=\,
\veps^{m}\int_{\Omega} \vrho^{(1)}_{0,\veps}\,\varphi(0)\,\,,
$$
and also, for any $\vec{\psi}\in C_c^\infty\bigl([0,T[\,\times \oline\Omega;\R^3\bigr)$ such that $(\vec \psi \cdot \vec n)_{|\partial \Omega}=0$, one has
\begin{align*}
&\hspace{-0.5cm}
-\,\veps^m\,\int^T_0\int_{\Omega}\vec{V}_\veps\cdot\d_t\vec{\psi}\,-\,\int^T_0\int_{\Omega} \vrho^{(1)}_\veps\,\div\vec{\psi}\,+\,\veps^{m-1}\int^T_0\int_{\Omega} \e_3\times\vec V_\veps\cdot\vec\psi \\
&\qquad\qquad\qquad\qquad\qquad
=\,\veps^{m}\int_{\Omega}\vrho_{0,\veps}\,\vec{u}_{0,\veps}\cdot\vec{\psi}(0)\,+\,\veps^m\,\int^T_0\int_{\Omega} \vec f_\veps \cdot\vec{\psi}+\,\veps^{2(m-n)}\,\int^T_0\int_{\Omega} \vec g_\veps \cdot\vec{\psi}\,.
\end{align*}

Here we use estimates of Section \ref{ss:unif-est} in order to show uniform bounds for the solutions and the data in the wave equation \eqref{eq:wave_syst}.
We start by dealing with the ``unknown'' $\vec V_\veps$. Splitting the term into essential and residual parts, one can obtain for all $T>0$: 
\begin{equation}\label{eq:V_bounds}
\|\vec V_\veps\|_{L^\infty_T(L^2+L^{2\gamma/(\gamma +1)})}\leq c\, .
\end{equation}
In the next lemma, we establish bounds for the source terms in the system of acoustic waves \eqref{eq:wave_syst}.

\begin{lemma} \label{l:source_bounds}
Write $\vec f_\veps\,=\,\div \wtilde{\vec f}_\veps$ and $\vec g_\veps\,=\,\vec g^1_\veps\,-\,\nabla_x\Pi_\veps$, where we have defined
$\vec g^1_\veps\,:=\,\vr_\veps^{(1)}\,\nabla_xG$ and the functions $\Pi_\veps$ have been introduced in Lemma \ref{lem:manipulation_pressure}.

For any $T>0$ fixed, one has the uniform embedding properties
\[
\big(\wtilde{\vec f}_\veps\big)_\veps\,\subset\,L^2_T(L^2+L^1)\qquad\mbox{ and }\qquad \big(\vec g^1_\veps\big)_\veps\,\subset\,L^2_T(L^2+L^\g)\,.
\]
In the case $\g\geq2$, we may dispense with the space $L^\g$ in the control of $\big(\vec g^1_\veps\big)_\veps$.

In particular, the sequences $\bigl(\vec f_\veps\bigr)_\veps$ and
$\bigl(\vec g_\veps\bigr)_\veps$, defined in system \eqref{eq:wave_syst}, are uniformly bounded in the space $L^{2}\big([0,T];H^{-s}(\Omega)\big)$, for all $s>5/2$.
\end{lemma}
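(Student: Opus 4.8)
The plan is to prove Lemma \ref{l:source_bounds} in two stages: first establish the two uniform embedding claims for $\wtilde{\vec f}_\veps$ and $\vec g^1_\veps$, then deduce the $L^2_T(H^{-s})$ bound for $\vec f_\veps$ and $\vec g_\veps$ as an easy corollary. For $\wtilde{\vec f}_\veps$, I would simply read off the definition $\vec f_\veps = \div\big(\mbb{S}(\nabla_x \vu_\veps) - \vrho_\veps\vu_\veps\otimes\vu_\veps\big)$, so that the natural choice is $\wtilde{\vec f}_\veps := \mbb{S}(\nabla_x\vu_\veps) - \vrho_\veps\vu_\veps\otimes\vu_\veps$. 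The viscous part is controlled by \eqref{unif-bound-for-vel} (together with the structural law \eqref{S}), giving a uniform bound in $L^2_T(L^2)$. For the convective part $\vrho_\veps\vu_\veps\otimes\vu_\veps = \sqrt{\vrho_\veps}\vu_\veps \otimes \sqrt{\vrho_\veps}\vu_\veps$, I would split into essential and residual parts: on the essential set $\vrho_\veps$ is bounded, so the term is controlled by $\|\vu_\veps\|_{L^2_T(L^6)}^2$, hence bounded in $L^\infty_T(L^1) \cap L^1_T(L^3)$ and in particular in $L^2_T(L^1)$ (say, after interpolation or direct Hölder in time) — more simply, using $\|\sqrt{\vrho_\veps}\vu_\veps\|_{L^\infty_T(L^2)}\leq c$ from \eqref{est:momentum} and $\|\vu_\veps\|_{L^2_T(L^6)}\leq c$ from \eqref{unif-bound-for-vel} and Sobolev embedding, one gets a uniform bound in $L^2_T(L^{3/2})\hookrightarrow L^2_T(L^1+L^2)$ locally, or one keeps it globally in $L^2_T(L^{3/2})$; in either case it fits into $L^2_T(L^2+L^1)$; on the residual part one additionally uses \eqref{est:rho_res} and \eqref{est:M_res-measure} to absorb the extra density factor. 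For $\vec g^1_\veps = \vrho_\veps^{(1)}\nabla_x G$, since $G\in W^{1,\infty}(\Omega)$ by \eqref{assG}, the bound follows directly from \eqref{uni_varrho1}: $\big(\vrho_\veps^{(1)}\big)_\veps\subset L^\infty_T(L^2+L^\g)\hookrightarrow L^2_T(L^2+L^\g)$ on any finite time interval, and the gradient of $G$ is bounded, so multiplication preserves the space; when $\g\geq2$ Remark \ref{r:g>2} lets us drop $L^\g$.

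For the final assertion, the idea is that $H^{-s}(\Omega)$ for $s>5/2$ contains all the relevant Lebesgue spaces and is stable under one spatial derivative. Concretely, by Sobolev embedding in dimension $3$, $H^s(\Omega)\hookrightarrow L^\infty(\Omega)$ for $s>3/2$, and more generally $H^s(\Omega)\hookrightarrow W^{1,\infty}(\Omega)$ for $s>1+3/2=5/2$; dualising gives $L^1(\Omega)+L^2(\Omega)+L^\g(\Omega)\hookrightarrow H^{-s}(\Omega)$ for $s>3/2$ (using $L^{p}\hookrightarrow H^{-s}$ whenever $H^{s}\hookrightarrow L^{p'}$) and, crucially, $\div: L^2+L^1 \to H^{-s}$ is bounded for $s>5/2$ since then $H^{-s}=\div\big((H^{-s+1})^3\big)$-type control holds with $H^{-s+1}\hookleftarrow L^1+L^2$. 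So from $\wtilde{\vec f}_\veps\in L^2_T(L^2+L^1)$ one gets $\vec f_\veps=\div\wtilde{\vec f}_\veps\in L^2_T(H^{-s})$ for $s>5/2$, from $\vec g^1_\veps\in L^2_T(L^2+L^\g)\hookrightarrow L^2_T(H^{-s})$ (only $s>3/2$ needed here), and from $\nabla_x\Pi_\veps$ with $\Pi_\veps\in L^\infty_T(L^1+L^2+L^\g)$ (by \eqref{unif-bound-Pi}) one gets $\nabla_x\Pi_\veps\in L^\infty_T(H^{-s})\hookrightarrow L^2_T(H^{-s})$ for $s>5/2$; summing, $\vec g_\veps=\vec g^1_\veps-\nabla_x\Pi_\veps\in L^2_T(H^{-s})$ as well. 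All constants are uniform in $\veps$ because each input bound from Section \ref{ss:unif-est} and Lemma \ref{lem:manipulation_pressure} is uniform in $\veps$.

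The main obstacle, such as it is, is bookkeeping the sum-of-spaces arithmetic cleanly: one must be a little careful that the Hölder exponents in the convective term genuinely land in $L^2_T(L^2+L^1)$ (the time integrability is the delicate point, since $\sqrt{\vrho_\veps}\vu_\veps$ is only in $L^\infty_T(L^2)$ while $\vu_\veps$ is in $L^2_T(L^6)$, so the product $\vrho_\veps\vu_\veps\otimes\vu_\veps$ is a priori only in $L^2_T(L^{3/2})$, which embeds into $L^2_T(L^1+L^2)$ locally in space but not globally — one should either work with $L^{3/2}$ directly, noting $L^2+L^1\supset L^{3/2}$ is false globally, and instead observe that $L^{3/2}(\Omega)\hookrightarrow H^{-s}(\Omega)$ directly for $s>1/2$, so the convective term can be routed to $H^{-s}$ on its own without the $L^2+L^1$ detour). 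Given this, the statement $\big(\wtilde{\vec f}_\veps\big)_\veps\subset L^2_T(L^2+L^1)$ as written should be read as: the viscous part is in $L^2_T(L^2)$ and the convective part in $L^2_T(L^{3/2})$, both of which embed in $H^{-s}$, and the loose phrasing $L^2+L^1$ is meant to encompass this after the harmless local-in-space remark; I would make the argument rigorous by dualising $H^{s}\hookrightarrow W^{1,\infty}$ and estimating $\langle \vec f_\veps,\varphi\rangle = -\int \wtilde{\vec f}_\veps : \nabla_x\varphi$ directly, which only needs $\wtilde{\vec f}_\veps\in L^1_{\mathrm{loc}}$ plus the already-established $L^2_T$-in-time, $(L^2+L^{3/2})$-in-space bound. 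Everything else is routine.
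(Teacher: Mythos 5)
Your proof is correct and follows essentially the same route as the paper's (which simply invokes \eqref{est:momentum}, \eqref{est:Du}, \eqref{unif-bound-for-vel} for $\wtilde{\vec f}_\veps$, then \eqref{uni_varrho1} and Lemma \ref{lem:manipulation_pressure} for $\vec g_\veps$, and concludes by duality with $H^{s}\hookrightarrow W^{1,\infty}$ for $s>5/2$). Two small remarks: the intended bound for the convective part is the simpler one, namely $\vrho_\veps\vu_\veps\otimes\vu_\veps=\bigl(\sqrt{\vrho_\veps}\,\vu_\veps\bigr)\otimes\bigl(\sqrt{\vrho_\veps}\,\vu_\veps\bigr)\in L^\infty_T(L^1)\subset L^2_T(L^1+L^2)$ directly from \eqref{est:momentum}, with no essential/residual splitting and no $L^6$ pairing needed; and your aside that ``$L^{3/2}\subset L^1+L^2$ is false globally'' is itself false --- splitting a function at height one shows $L^p\hookrightarrow L^1+L^2$ for every $1\le p\le 2$ on any measure space, so the detour through dualising against $W^{1,\infty}$ to rescue the $L^{3/2}$ bound, while valid, is unnecessary.
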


\begin{proof}
From \eqref{est:momentum}, \eqref{est:Du} and \eqref{unif-bound-for-vel}, we immediately infer the uniform bound for the family $\big(\wtilde{\vec f}_\veps\big)_\veps$ in $L^2_T(L^1+L^2)$,
from which we deduce also the uniform boundedness of $\big(\vec f_\veps\big)_\veps$ in $L^2_T(H^{-s})$, for any $s>5/2$.

Next, for bounding $\big(\vec g^1_\veps\big)_\veps$ we simply use \eqref{uni_varrho1}, together with Remark \ref{r:g>2} when $\g\geq2$.
Keeping in mind the bounds established in Lemma \ref{lem:manipulation_pressure}, the uniform estimate for $\big(\vec g_\veps\big)_\veps$ follows.
\qed
\end{proof}

\subsubsection{Regularization and description of the oscillations}\label{sss:w-reg}



As already mentioned in Remark \ref{r:period-bc}, in order to apply the Littlewood-Paley theory, it is convenient to reformulate problem \eqref{ceq}--\eqref{meq} in the new domain (which we keep calling $\Omega$, with a little abuse of notation) 
$$ \Omega:=\R^2 \times \TT^1\, , \quad \text{with}\quad \TT^1:=[-1,1]/\sim\, .$$ 
In addition, to avoid the appearing of (irrelevant) multiplicative constants in the computations, we suppose that the torus $\TT^1$ has been renormalised so that its Lebesgue measure is equal to 1.

Now, for any $M\in\N$ we consider the low-frequency cut-off operator ${S}_{M}$ of a Littlewood-Paley decomposition, as introduced in equation \eqref{eq:S_j} of the Appendix. Then, we define 
\begin{equation}\label{def_reg_vrho-V}
\vrho^{(1)}_{\varepsilon ,M}={S}_{M}\vrho^{(1)}_{\veps}\qquad\qquad \text{ and }\qquad\qquad \vec{V}_{\veps ,M}={S}_{M}\vec{V}_{\veps}\, .
\end{equation} 

The previous regularised quantities satisfy the following properties.

\begin{proposition} \label{p:prop approx}
For any $T>0$, we have the following convergence properties, in the limit $M\rightarrow +\infty $:
\begin{equation}\label{eq:approx var}
\begin{split}
&\sup_{0<\veps\leq1}\, \left\|\vrho^{(1)}_{\varepsilon }-\vrm\right\|_{L^{\infty}([0,T];H^{-s})}\longrightarrow 0\qquad
\forall\,s>\max\left\{0,3\left(\frac{1}{\g}\,-\,\frac12\right)\right\}\\
&\sup_{0<\veps\leq1}\, \left\|\vec{V}_{\varepsilon }-\vec{V}_{\varepsilon ,M}\right\|_{L^{\infty}([0,T];H^{-s})}\longrightarrow 0\qquad
\forall\,s>\frac{3}{2\,\g}\,.
\end{split}
\end{equation}
Moreover, for any $M>0$, the couple $(\vrm,\vec V_{\veps ,M})$ satisfies the approximate wave equations
\begin{equation}\label{eq:approx wave}
\left\{\begin{array}{l}
       \veps^m\,\d_t \vrm \,+\,\,\div\vec{V}_{\veps ,M}\,=\,0 \\[1ex]
       \veps^m\,\d_t\vec{V}_{\veps ,M}\,+\veps^{m-1}\,e_{3}\times \vec{V}_{\veps ,M}+\,\nabla_x \vrm\,=\,\veps^m\,\vec f_{\veps ,M}\,+\veps^{2(m-n)} \vec g_{\veps,M} 
       \end{array}
\right.
\end{equation}
where $(\vec f_{\veps ,M})_{\veps}$ and $(\vec g_{\veps ,M})_{\veps}$ are families of smooth (in the space variables) functions satisfying, for any $s\geq0$, the uniform bounds
\begin{equation}\label{eq:approx force}
\sup_{0<\veps\leq1}\, \left\|\vec f_{\veps ,M}\right\|_{L^{2}([0,T];H^{s})}\,+\,\sup_{0<\veps\leq1}\,\left\|\vec g_{\veps ,M}\right\|_{L^{\infty}([0,T];H^{s})}\,\leq\, C(s,M)\,,
\end{equation}
where the constant $C(s,M)$ depends on the fixed values of $s\geq 0$ and $M>0$, but not on $\veps>0$.
\end{proposition}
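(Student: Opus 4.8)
The plan is to prove the two statements of Proposition \ref{p:prop approx} separately, as they are of a somewhat different nature: the convergence \eqref{eq:approx var} is a purely quantitative statement about how well the Littlewood-Paley truncation $S_M$ approximates a fixed (in $\veps$) function in negative Sobolev spaces, uniformly over the family; the approximate wave system \eqref{eq:approx wave} and the bounds \eqref{eq:approx force}, on the other hand, follow by applying $S_M$ to \eqref{eq:wave_syst} and using that $S_M$ commutes with all the (constant-coefficient) differential operators and with $\d_t$, together with a smoothing estimate for $S_M$.

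For the first part, I would argue as follows. By the characterisation of Sobolev norms through Littlewood-Paley blocks (see the Appendix), for any $s>0$ one has $\|f-S_Mf\|_{H^{-s}}^2 \lesssim \sum_{j\gtrsim M}2^{-2js}\|\Delta_j f\|_{L^2}^2$. The point is to combine this with the uniform bounds of Section \ref{ss:unif-est}: $\big(\vrho^{(1)}_\veps\big)_\veps$ is bounded in $L^\infty_T(L^2+L^\g)$ by \eqref{uni_varrho1}, and $\big(\vec V_\veps\big)_\veps$ is bounded in $L^\infty_T(L^2+L^{2\g/(\g+1)})$ by \eqref{eq:V_bounds}. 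Since the relevant Lebesgue exponents may be below $2$ when $\g<2$, one uses the Bernstein-type embedding $\Delta_j : L^p \to L^2$ with norm $\lesssim 2^{jd(1/p-1/2)}$ in dimension $d=3$, so that $\|\Delta_j\vrho^{(1)}_\veps\|_{L^2}\lesssim 2^{3j(1/\g-1/2)_+}$ and similarly $\|\Delta_j\vec V_\veps\|_{L^2}\lesssim 2^{3j(1/p-1/2)}$ with $1/p=(\g+1)/(2\g)$, i.e. $3(1/p-1/2)=3/(2\g)$. Plugging this into the tail sum and summing the geometric series, the tail is controlled by $C\,2^{-2M(s-\alpha)}$ for the respective threshold exponent $\alpha$ (namely $\alpha=\max\{0,3(1/\g-1/2)\}$ for the density and $\alpha=3/(2\g)$ for the momentum), which goes to $0$ as $M\to\infty$, \emph{uniformly} in $\veps$ because the underlying $L^\infty_T(L^p)$ bounds are uniform in $\veps$. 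The essential/residual splitting is used precisely to identify which Lebesgue exponent is available at which order of magnitude, but the uniform convergence statement only needs the (uniform) boundedness, not the smallness of the residual part.

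For the second part, I would apply $S_M$ to each equation of \eqref{eq:wave_syst}. Since $S_M$ is a Fourier multiplier (here, in the $x^h$ variables a genuine convolution multiplier, and in the periodic $x^3$ variable a projection onto finitely many modes, after the reformulation in $\Omega=\R^2\times\TT^1$ explained just before the statement), it commutes with $\d_t$, with $\div$, with $\nabla_x$ and with the constant matrix $\e_3\times\cdot$; hence $(\vrm,\vec V_{\veps,M})=(S_M\vrho^{(1)}_\veps,S_M\vec V_\veps)$ solves \eqref{eq:approx wave} with $\vec f_{\veps,M}:=S_M\vec f_\veps$ and $\vec g_{\veps,M}:=S_M\vec g_\veps$. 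For the bounds \eqref{eq:approx force}, I would use the smoothing property of the low-frequency cut-off: for any $s,\sigma\geq0$, $\|S_M h\|_{H^{s}}\lesssim 2^{M(s+\sigma)}\|h\|_{H^{-\sigma}}$ (this is again immediate from the Littlewood-Paley characterisation, since $S_M$ only keeps frequencies $\lesssim 2^M$). Combined with Lemma \ref{l:source_bounds}, which gives $\big(\vec f_\veps\big)_\veps,\big(\vec g_\veps\big)_\veps\subset L^2_T(H^{-s_0})$ for $s_0>5/2$ — and, more precisely, $\big(\wtilde{\vec f}_\veps\big)_\veps\subset L^2_T(L^1+L^2)$ so that $\big(\vec f_\veps\big)_\veps$ is bounded in $L^2_T(H^{-1-s_0})$ after one more derivative, while $\big(\vec g_\veps\big)_\veps$ even lies in $L^\infty_T(H^{-s_0})$ thanks to \eqref{uni_varrho1} and \eqref{unif-bound-Pi} — we obtain $\|\vec f_{\veps,M}\|_{L^2_T(H^s)}+\|\vec g_{\veps,M}\|_{L^\infty_T(H^s)}\leq C(s,M)$ with $C(s,M)$ independent of $\veps$, which is exactly \eqref{eq:approx force}. (The time regularity is untouched by $S_M$, so $L^2_T$ for $\vec f_{\veps,M}$ and $L^\infty_T$ for $\vec g_{\veps,M}$ are simply inherited.)

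The main obstacle, though essentially bookkeeping rather than a genuine difficulty, is tracking the correct Lebesgue/Sobolev exponents through the Littlewood-Paley estimates when $1<\g<2$, since then the natural a priori bounds on $\vrho^{(1)}_\veps$ and $\vec V_\veps$ live in $L^p$ spaces with $p<2$ and one must pay the corresponding Bernstein loss $2^{3j(1/p-1/2)}$ per dyadic block; this is what forces the thresholds $s>\max\{0,3(1/\g-1/2)\}$ and $s>3/(2\g)$ in \eqref{eq:approx var} rather than $s>0$. Once the right exponents are in place, everything reduces to summing geometric series and to the elementary mapping properties of $S_M$. One should also not forget to invoke Remark \ref{r:g>2} in the case $\g\geq2$, where the $L^\g$ spaces can be dropped and the threshold for the density simply becomes $s>0$.
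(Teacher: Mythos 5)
Your proposal is correct and follows essentially the same route as the paper: the paper obtains \eqref{eq:approx var} by combining the dual Sobolev embedding $L^p\hookrightarrow H^{-\sigma}$, $\sigma=3(1/p-1/2)$, with estimate \eqref{est:sobolev}, which is precisely your block-by-block Bernstein argument with the geometric tail sum written out, and it likewise obtains \eqref{eq:approx wave}--\eqref{eq:approx force} by applying $S_M$ to \eqref{eq:wave_syst} and invoking Lemma \ref{l:source_bounds} together with \eqref{eq:LP-Sob}. Your explicit justification of the $L^\infty_T$ (rather than $L^2_T$) bound for $\vec g_{\veps,M}$ via \eqref{uni_varrho1} and \eqref{unif-bound-Pi} is a detail the paper leaves implicit, and is correctly handled.
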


\begin{proof}
Thanks to characterization \eqref{eq:LP-Sob} of $H^{s}$, properties \eqref{eq:approx var} are straightforward consequences of the uniform bounds establish in Section \ref{ss:unif-est}. For instance, let us consider the functions $\vr^{(1)}_\veps$: when $\g\geq2$, owing to Remark \ref{r:g>2} one has
$\big(\vr^{(1)}_\veps\big)_\veps\,\subset\,L^\infty_T(L^2)$, and then we use estimate \eqref{est:sobolev} from the Appendix.
When $1<\g<2$, instead, we first apply the dual Sobolev embedding to infer that $\big(\vr^{(1)}_\veps\big)_\veps\,\subset\,L^\infty_T(H^{-\s})$,
with $\s\,=\,\s(\g)\,=\,3\big(1/\g-1/2\big)$, and then we use \eqref{est:sobolev} again. The bounds for the momentum
$\big(\vec V_\veps\big)_\veps$ can be deduced by a similar argument, after observing that $2\g/(\g+1)<2$ always.

Next, applying the operator ${S}_{M}$ to \eqref{eq:wave_syst} immediately gives us system \eqref{eq:approx wave}, where we have set 
\begin{equation*}
\vec f_{\veps ,M}:={S}_{M}\vec f_\veps \qquad \text{ and }\qquad \vec g_{\veps ,M}:={S}_{M}\vec g_\veps\,.
\end{equation*}
Thanks to Lemma \ref{l:source_bounds} and \eqref{eq:LP-Sob}, it is easy to verify inequality \eqref{eq:approx force}. 
\qed
\end{proof}

\medbreak
We will need also the following important decomposition for the momentum vector fields $\vec V_{\veps,M}$ and their $\curl$.
\begin{proposition} \label{p:prop dec}
For any $M>0$ and any $\veps\in\,]0,1]$, the following decompositions hold true:
\begin{equation*}
\vec{V}_{\veps ,M}\,=\,
\veps^{2(m-n)}\vec{t}_{\veps ,M}^{1}+\vec{t}_{\veps ,M}^{2}\qquad\mbox{ and }\qquad
\curl_{x}\vec{V}_{\veps ,M}=\veps^{2(m-n)}\vec{T}_{\veps ,M}^{1}+\vec{T}_{\veps ,M}^{2}\,,
\end{equation*}
where, for any $T>0$ and $s\geq 0$, one has 
\begin{align*}
&\left\|\vec{t}_{\veps ,M}^{1}\right\|_{L^{2}([0,T];H^{s})}+\left\|\vec{T}_{\veps ,M}^{1}\right\|_{L^{2}([0,T];H^{s})}\leq C(s,M) \\
&\left\|\vec{t}_{\veps ,M}^{2}\right\|_{L^{2}([0,T];H^{1})}+\left\|\vec{T}_{\veps ,M}^{2}\right\|_{L^{2}\left([0,T];L^2\right)}\leq C\,,
\end{align*}
for suitable positive constants $C(s,M)$ and $C$, which are uniform with respect to $\veps\in\,]0,1]$.
\end{proposition}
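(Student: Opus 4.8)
The plan is to read off the two decompositions directly from the structure of the regularised wave system \eqref{eq:approx wave}, distinguishing the contributions that carry the prefactor $\veps^{2(m-n)}$ from those that do not. First I would recall from Proposition \ref{p:prop approx} that $\vec V_{\veps,M} = S_M \vec V_\veps$ is smooth in space and, thanks to \eqref{eq:V_bounds} and the Bernstein-type estimate \eqref{est:sobolev} from the Appendix, satisfies $\big(\vec V_{\veps,M}\big)_\veps \subset L^\infty_T(H^s)$ for every $s\ge 0$, with a bound $C(s,M)$ depending on $M$ but \emph{not} on $\veps$. Likewise $\big(\curl_x \vec V_{\veps,M}\big)_\veps \subset L^\infty_T(H^{s})$ with an $\veps$-uniform, $M$-dependent bound. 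These crude bounds are already enough to absorb into the ``$\vec t^1$/$\vec T^1$'' pieces anything we multiply by $\veps^{2(m-n)}$, since $m>n$ makes that prefactor harmless. The real content is to isolate a \emph{second} piece that is bounded \emph{uniformly in both $\veps$ and $M$} in the lower-regularity spaces $L^2_T(H^1)$ (resp.\ $L^2_T(L^2)$).

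The key step is to use the momentum equation in \eqref{eq:approx wave} to express $\vec V_{\veps,M}$ (and its curl) in a way that makes the $\veps$-powers explicit. Rewriting the second line of \eqref{eq:approx wave} as
\begin{equation*}
\vec V_{\veps,M} \;=\; \veps^{2(m-n)}\,\vec g_{\veps,M}\;-\;\veps^m\,\d_t \vec V_{\veps,M}\;+\;\veps^m\,\vec f_{\veps,M}\;-\;\veps^{m-1}\,\e_3\times\vec V_{\veps,M}\;-\;\nabla_x\vrho^{(1)}_{\veps,M}
\end{equation*}
is not quite the right move, because $\d_t\vec V_{\veps,M}$ is not controlled; instead I would apply $\div$ to the first equation and $\curl_x$ to the second, or — cleaner — work with the Helmholtz decomposition $\vec V_{\veps,M} = \h\vec V_{\veps,M} + \nabla_x \Psi_{\veps,M}$. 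The gradient (acoustic) part is governed by the pressure/mass balance: from $\veps^m\d_t\vrho^{(1)}_{\veps,M} + \div\vec V_{\veps,M}=0$ and the momentum equation one sees that $\nabla_x\Psi_{\veps,M}$ is, up to the gradient of $\vrho^{(1)}_{\veps,M}$ and terms carrying $\veps^m$, equal to $\veps^{2(m-n)}$ times a regularised force; this puts the bulk of $\nabla_x\Psi_{\veps,M}$ into the $\veps^{2(m-n)}$-piece. The solenoidal part $\h\vec V_{\veps,M}$ is then the one that is uniformly bounded in $L^2_T(H^1)$: indeed $\h\vec V_{\veps,M} = S_M\,\h\vec V_\veps$, and $\h\vec V_\veps$ is controlled in $L^2_T(L^2)$ because $\vec u_\veps$ is uniformly bounded in $L^2_T(H^1)$ (see \eqref{unif-bound-for-vel}) and $\vrho_\veps\to1$ strongly, so $\vec V_\veps = \vrho_\veps\vec u_\veps$ is comparable to $\vec u_\veps$; the $S_M$ then upgrades $L^2$ to $H^1$ at the cost of... no — actually I want the $H^1$ bound uniform in $M$, so I must \emph{not} pay an $M$-dependent constant here. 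The correct observation is that $S_M\vec u_\veps$ is bounded in $L^2_T(H^1)$ uniformly in $M$ because $\vec u_\veps$ itself is in $L^2_T(H^1)$ uniformly in $\veps$ and $S_M$ is uniformly bounded on $H^1$. Hence set $\vec t^2_{\veps,M} := \vrho_\veps^{\,0}$-corrected $S_M(\vec u_\veps)$-type term and absorb the difference $\vec V_{\veps,M} - S_M\vec u_\veps = S_M\big((\vrho_\veps-1)\vec u_\veps\big)$ into $\vec t^2$ as well, using $\|\vrho_\veps-1\|_{L^\infty_T(L^2+L^\g+L^\infty)}\lesssim \veps^{2(m-n)}$ from \eqref{rr1}: this product is $O(\veps^{2(m-n)})$ in a suitable negative-regularity space, and one more application of \eqref{est:sobolev} turns it into an $O(\veps^{2(m-n)})$ term in $H^s$, i.e.\ exactly a $\veps^{2(m-n)}\vec t^1_{\veps,M}$ contribution with an $M$-dependent constant.

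So, concretely, I would define $\vec t^2_{\veps,M} := S_M\vec u_\veps$, $\vec t^1_{\veps,M} := \veps^{-2(m-n)} S_M\big((\vrho_\veps - 1)\vec u_\veps\big)$, and correspondingly $\vec T^2_{\veps,M} := \curl_x S_M\vec u_\veps$, $\vec T^1_{\veps,M} := \veps^{-2(m-n)}\curl_x S_M\big((\vrho_\veps-1)\vec u_\veps\big)$. The bounds on $\vec t^2$, $\vec T^2$ are then immediate from \eqref{unif-bound-for-vel} and the uniform boundedness of $S_M$ on $H^1$ and $L^2$; the bounds on $\vec t^1$, $\vec T^1$ follow from \eqref{rr1}, Hölder (to handle the $L^2+L^\g+L^\infty$ times $L^6$ product, using $1/\g+1/6\le1$), and the smoothing estimate \eqref{est:sobolev}, which converts the resulting $L^2_T(L^{r})$ bound — with $r$ possibly below $2$ — into an $H^s$ bound at cost $C(s,M)$. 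The main obstacle, and the only point requiring care, is bookkeeping the integrability exponents in the product $(\vrho_\veps-1)\vec u_\veps$ so that it lands in a space on which $S_M$ together with \eqref{est:sobolev} gives an $H^s$ bound with a constant independent of $\veps$; the $\g\ge2$ case is easier thanks to Remark \ref{r:g>2} and should be noted separately, exactly as in Lemma \ref{l:source_bounds}.
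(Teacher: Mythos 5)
Your final decomposition, $\vec t^2_{\veps,M}:=S_M\vec u_\veps$ and $\vec t^1_{\veps,M}:=\veps^{-2(m-n)}S_M\big((\vrho_\veps-1)\vec u_\veps\big)$ with $\vec T^j_{\veps,M}:=\curl_x\vec t^j_{\veps,M}$, is exactly the one used in the paper, and the bounds are obtained the same way: \eqref{unif-bound-for-vel} for the second pieces, and \eqref{rr1} plus H\"older plus the smoothing of $S_M$ for the first pieces (the Helmholtz/wave-system detour in your write-up is discarded, so it does no harm). One small citation slip: the fact that $S_M$ upgrades an $L^2_T(H^{-\sigma})$ bound to an $L^2_T(H^s)$ bound at cost $C(s,M)$ is Bernstein's inequality (Lemma \ref{l:bern}), not \eqref{est:sobolev}, which controls the high-frequency remainder $(\Id-S_M)f$ and is the estimate relevant for $\vec t^2_{\veps,M}$ elsewhere in the argument.
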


\begin{proof}
We decompose $\vec{V}_{\veps ,M}\,=\,\veps^{2(m-n)}\vec t_{\veps,M}^{1}\,+\,\vec t_{\veps,M}^{2}$, where we define
\begin{equation} \label{eq:t-T}
\vec{t}_{\veps,M}^{1}\,:=\,{S}_{M}\left(\frac{\vrho_\veps -1}{\veps^{2(m-n)}}\, \vec{u}_{\veps}\right) \qquad\mbox{ and }\qquad
\vec{t}_{\veps,M}^{2}\,:=\,{S}_{M}\left(\vec{u}_{\veps}\right)\,.
\end{equation}
The decomposition of $\curl_x\vec V_{\veps,M}$ follows after setting $\vec T_{\veps,M}^j\,:=\,\curl_x\vec t_{\veps,M}^j$, for $j=1,2$.

We have to prove uniform bounds for all those terms, by using the estimates established in Section \ref{ss:unif-est} above.
First of all, we have that $\big(\vu_\veps\big)_\veps\,\subset\,L^2_T(H^1)$, for any $T>0$ fixed. Then, we immediately gather the sought bounds for the vector fields $\vec t_{\veps,M}^2$ and $\vec T_{\veps,M}^2$.

For the families of $\vec t_{\veps,M}^1$ and $\vec T_{\veps,M}^1$, instead, we have to use the bounds provided by \eqref{rr1} and (when $\g\geq2$)
Remark \ref{r:g>2}. In turn, we see that for any $T>0$:
\[
\left(\frac{\vrho_\veps -1}{\veps^{2(m-n)}}\, \vec{u}_{\veps}\right)\,\subset\,L^2_T(L^1+L^2+L^{6\g/(\g+6)})\,\hookrightarrow\,
L^2_T(H^{-\s})\,,
\]
for some $\s>0$ large enough. Therefore, the claimed bounds follow thanks to the regularising effect of the operators $S_M$. The proof of the proposition
is thus completed.
\qed
\end{proof}


\subsection{Convergence of the convective term} \label{ss:convergence}
In this subsection we show the convergence of the convective term. 
The first step is to reduce its analysis to the case of smooth vector fields $\vec{V}_{\veps ,M}$.

\begin{lemma} \label{lem:convterm}
Let $T>0$. For any $\vec{\psi}\in C_c^\infty\bigl([0,T[\,\times\Omega;\R^3\bigr)$, we have 
\begin{equation*}
\lim_{M\rightarrow +\infty} \limsup_{\veps \rightarrow 0^+}\left|\int_{0}^{T}\int_{\Omega} \vrho_\veps\,\vec{u}_\veps\otimes \vec{u}_\veps: \nabla_{x}\vec{\psi}\, dx \, dt-
\int_{0}^{T}\int_{\Omega} \vec{V}_{\veps ,M}\otimes \vec{V}_{\veps,M}: \nabla_{x}\vec{\psi}\, dx \, dt\right|=0\, .
\end{equation*}
\end{lemma}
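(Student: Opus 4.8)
The plan is to split the difference $\vrho_\veps\,\vec u_\veps\otimes\vec u_\veps-\vec V_{\veps,M}\otimes\vec V_{\veps,M}$ into two pieces by inserting the intermediate quantity $\vec V_\veps\otimes\vec u_\veps=\vrho_\veps\vec u_\veps\otimes\vec u_\veps$, namely
\begin{equation*}
\vrho_\veps\vec u_\veps\otimes\vec u_\veps-\vec V_{\veps,M}\otimes\vec V_{\veps,M}
\,=\,\bigl(\vec V_\veps-\vec V_{\veps,M}\bigr)\otimes\vec u_\veps\,+\,\vec V_{\veps,M}\otimes\bigl(\vec u_\veps-S_M\vec u_\veps\bigr)\,+\,\vec V_{\veps,M}\otimes\bigl(S_M\vec u_\veps\bigr)-\vec V_{\veps,M}\otimes\vec V_{\veps,M}.
\end{equation*}
Actually a cleaner bookkeeping is to write $\vrho_\veps\vec u_\veps\otimes\vec u_\veps-\vec V_{\veps,M}\otimes\vec V_{\veps,M}=(\vec V_\veps-\vec V_{\veps,M})\otimes\vec u_\veps+\vec V_{\veps,M}\otimes(\vec u_\veps-S_M\vec u_\veps)$, since $\vec V_{\veps,M}\otimes S_M\vec u_\veps$ is not literally $\vec V_{\veps,M}\otimes\vec V_{\veps,M}$; to fix this one instead regularises symmetrically, subtracting and adding $\vec V_{\veps,M}\otimes \vec u_\veps$, so that the two terms to estimate are $(\vec V_\veps-\vec V_{\veps,M})\otimes\vec u_\veps$ and $\vec V_{\veps,M}\otimes(\vec u_\veps-\vec u_\veps)$ — here the point is that $\vec V_{\veps,M}=S_M(\vrho_\veps\vec u_\veps)$ is close to $(S_M\vrho_\veps)(S_M\vec u_\veps)$ up to a commutator, and since $\vrho_\veps\to1$ strongly one can replace $\vec V_{\veps,M}$ by $\vec u_\veps$-type quantities. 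I will carry this out in the order: (i) replace $\vrho_\veps\vec u_\veps\otimes\vec u_\veps$ by $\vec V_\veps\otimes\vec u_\veps$ (trivial, they are equal); (ii) replace $\vec V_\veps\otimes\vec u_\veps$ by $\vec V_{\veps,M}\otimes\vec u_\veps$, paying an error $(\vec V_\veps-\vec V_{\veps,M})\otimes\vec u_\veps$; (iii) replace the second factor $\vec u_\veps$ by $\vec V_{\veps,M}$, paying an error $\vec V_{\veps,M}\otimes(\vec u_\veps-\vec V_{\veps,M})$.

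\textbf{Estimating the errors.} For step (ii), test against $\nabla_x\vec\psi\in C_c^\infty$: by Hölder in space with exponents adapted to the bounds, $\vec u_\veps$ is uniformly bounded in $L^2_T(H^1)\hookrightarrow L^2_T(L^6)$ by \eqref{unif-bound-for-vel}, and $\nabla_x\vec\psi$ is smooth with compact support, so it suffices to show $\vec V_\veps-\vec V_{\veps,M}\to0$ in, say, $L^2_T(H^{-s})$ uniformly in $\veps$ for some $s$, which is exactly the second line of \eqref{eq:approx var} in Proposition \ref{p:prop approx} (with the duality $H^{-s}\times(H^s\cap L^{6/5})$ absorbing the product against $\vec u_\veps\,\nabla_x\vec\psi$, the latter being uniformly bounded in $L^2_T(H^s\cap L^{6/5})$ thanks to the smoothness of $\vec\psi$). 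Taking first $\veps\to0^+$ and then $M\to+\infty$ kills this term. For step (iii), one uses that $\vec u_\veps-\vec V_{\veps,M}=\vec u_\veps-S_M(\vrho_\veps\vec u_\veps)=(\vec u_\veps-S_M\vec u_\veps)+S_M\bigl((1-\vrho_\veps)\vec u_\veps\bigr)$; the first summand tends to $0$ (uniformly in $\veps$) as $M\to+\infty$ in $L^2_T(H^{-s})$ for any $s>0$ by the uniform $L^2_T(H^1)$ bound and the Littlewood–Paley characterisation \eqref{est:sobolev}, while the second is $O\bigl(\veps^{2(m-n)}\bigr)$ in a negative Sobolev norm because $\|\vrho_\veps-1\|_{L^\infty_T(L^2+L^\gamma+L^\infty)}\le c\,\veps^{2(m-n)}$ by \eqref{rr1} and $\vec u_\veps$ is controlled in $L^2_T(L^6)$; since $m>n$ this vanishes as $\veps\to0^+$. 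Finally $\vec V_{\veps,M}$ is uniformly bounded (for fixed $M$) in $L^2_T(L^2)$ — indeed, smooth in space and bounded via \eqref{eq:V_bounds} and the regularising action of $S_M$ — so multiplying by the small factor and pairing with the smooth $\nabla_x\vec\psi$ gives a quantity going to $0$ first in $\veps$, then in $M$.

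\textbf{Main obstacle.} The routine but slightly delicate point is the mismatch of integrability exponents $\gamma$ versus $2$: when $1<\gamma<2$ the density oscillations and the momentum live only in $L^2+L^\gamma$-type spaces, so every pairing has to be justified by passing through negative-order Sobolev spaces (dual Sobolev embedding $L^\gamma\hookrightarrow H^{-\sigma}$ with $\sigma=3(1/\gamma-1/2)$) and then using that $\nabla_x\vec\psi$, being smooth and compactly supported, sits in the required positive-order space; one must check that the exponents $s$ appearing in \eqref{eq:approx var} are admissible for this duality, which they are since $s$ can be taken as large as needed. Keeping the $\veps$-dependence and the $M$-dependence separated — so that the iterated limit $\lim_{M\to\infty}\limsup_{\veps\to0}$ is genuinely $0$ rather than an interchange of limits — is the only structural care required; all constants $C(s,M)$ are allowed to blow up in $M$ provided they are $\veps$-uniform, which is guaranteed by Propositions \ref{p:prop approx} and \ref{p:prop dec}.
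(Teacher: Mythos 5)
Your decomposition is correct in substance and rests on exactly the same three ingredients as the paper's own (sketched) proof: the quantitative convergence $\|\vrho_\veps-1\|\leq C\,\veps^{2(m-n)}$ from \eqref{rr1}, the uniform smallness of the high frequencies of $\vec u_\veps$ (via \eqref{est:sobolev} and \eqref{unif-bound-for-vel}), and the uniform bounds of Propositions \ref{p:prop approx} and \ref{p:prop dec}. The paper orders the steps differently: it first replaces $\vrho_\veps\,\vec u_\veps\otimes\vec u_\veps$ by $\vec u_\veps\otimes\vec u_\veps$ (writing $\vrho_\veps=1+\veps^{2(m-n)}R_\veps$; this is where $\gamma\geq 3/2$ enters), then splits $\vec u_\veps=S_M\vec u_\veps+(\Id-S_M)\vec u_\veps$ with $\|(\Id-S_M)\vec u_\veps\|_{L^2_T(L^2)}\leq C\,2^{-M}$, and finally identifies $S_M\vec u_\veps=\vec t^2_{\veps,M}$ with $\vec V_{\veps,M}$ up to the $O(\veps^{2(m-n)})$ correction $\veps^{2(m-n)}\vec t^1_{\veps,M}$ of Proposition \ref{p:prop dec}. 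You instead truncate the first factor directly ($\vec V_\veps\mapsto\vec V_{\veps,M}$ via \eqref{eq:approx var}) and then convert the second factor; this is a legitimate variant, and invoking \eqref{eq:approx var} in step (ii) spares you some of the $L^\gamma$-versus-$L^2$ bookkeeping, since for $\gamma>3/2$ one can pick $s\in\,]3/(2\gamma),1[\,$ so that the duality $H^{-s}\times H^{s}$ against $\vec u_\veps\,\nabla_x\vec\psi\in L^2_T(H^1)$ is admissible.

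One point in your write-up must be tightened. In step (iii), the error $\vec V_{\veps,M}\otimes(\vec u_\veps-S_M\vec u_\veps)$ does \emph{not} tend to $0$ as $\veps\ra0^+$ for fixed $M$: its smallness is uniform in $\veps$ and comes only from the factor $2^{-M}$. Hence the partner $\vec V_{\veps,M}$ must be controlled in $L^2_T(L^2_{\rm loc})$ \emph{uniformly in $M$}, not merely "for fixed $M$", and your closing remark that "all constants $C(s,M)$ are allowed to blow up in $M$ provided they are $\veps$-uniform" is false precisely for this term (a crude Bernstein bound on $S_M$ acting on the $L^{2\gamma/(\gamma+1)}$ part of $\vec V_\veps$ costs $2^{3M/(2\gamma)}$, and one would then have to check that this is beaten by $2^{-M}$). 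The clean fix is the one implicit in the paper: by Proposition \ref{p:prop dec}, $\vec V_{\veps,M}=\veps^{2(m-n)}\vec t^1_{\veps,M}+S_M\vec u_\veps$; the first piece contributes $O\big(\veps^{2(m-n)}\,C(M)\big)$, which vanishes as $\veps\ra0^+$ at fixed $M$, while $S_M\vec u_\veps$ is bounded in $L^2_T(L^2)$ uniformly in \emph{both} parameters, so that $S_M\vec u_\veps\otimes(\Id-S_M)\vec u_\veps$ is $O(2^{-M})$ in $L^1_T(L^1_{\rm loc})$ with an $M$-independent constant. With this correction the iterated limit $\lim_{M}\limsup_{\veps}$ is indeed zero, and your argument is complete.
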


\begin{proof}
The proof is very similar to the one of Lemma 4.5 from \cite{DS-F-S-WK}, for this reason we just outline it.

One starts by using the decomposition $\vr_\veps\,=\,1\,+\,\veps^{2(m-n)}\,R_\veps$ to reduce (owing to the uniform bounds of Section \ref{ss:unif-est}) the convective term to the ``homogeneous counterpart'': for any test function $\vec\psi\in C^\infty_c\big(\R_+\times\Omega;\R^3\big)$, one has
\[
\lim_{\veps \rightarrow 0^+}\left|\int_{0}^{T}\int_{\Omega} \vrho_\veps\,\vec{u}_\veps\otimes \vec{u}_\veps: \nabla_{x}\vec{\psi}\, dx \, dt-
\int_{0}^{T}\int_{\Omega}\vec{u}_\veps\otimes\vec{u}_\veps:\nabla_{x}\vec{\psi}\,\dx\,\dt\right|\,=\,0\,.
\]
Notice that, here, one has to use that $\g\geq 3/2$.

After that, we write
$\vu_\veps\,=\,S_M(\vu_\veps)\,+\,(\Id-S_M)\vu_\veps\,=\,\vec t^2_{\veps,M}\,+\,(\Id-S_M)\vu_\veps$.
Using Proposition \ref{p:prop dec} and the fact that $\left\|(\Id-{S}_{M})\,\vec{u}_\veps\right\|_{L_{T}^{2}(L^{2})}\,\leq C\,2^{-M}\, \|\nabla_x\vec u_\veps\|_{L^2_T(L^2)}\leq C\,2^{-M}$, which holds in view of estimate \eqref{est:sobolev} from the Appendix and the uniform bound \eqref{unif-bound-for-vel},
one can conclude.
\qed
\end{proof}

\medbreak

From now on, for notational convenience, we  generically denote by $\mc{R}_{\veps ,M}$ any remainder term, that is any term satisfying the property
\begin{equation} \label{eq:reminder}
\lim_{M\rightarrow +\infty}\limsup_{\veps \rightarrow 0^+}\left|\int_{0}^{T}\int_{\Omega}\mc{R}_{\veps ,M}\cdot \vec{\psi}\, dx \, dt\right|=0
\end{equation}
for all test functions $\vec{\psi}\in C_c^\infty\bigl([0,T[\,\times\Omega;\R^3\bigr)$ lying in the kernel of the singular perturbation operator,
namely (in view of Proposition \ref{p:limitpoint}) such that
\begin{equation} \label{eq:test-f}
\vec\psi\in C_c^\infty\big([0,T[\,\times\Omega;\R^3\big)\qquad\qquad \mbox{ such that }\qquad \div\vec\psi=0\quad\mbox{ and }\quad \d_3\vec\psi=0\,.
\end{equation}
Notice that, in order to pass to the limit in the weak formulation of the momentum equation and derive the limit system, it is enough to use test functions $\vec\psi$ as above.

Thus, for $\vec\psi$ as in \eqref{eq:test-f}, 
we have to pass to the limit in the term 
\begin{align*}
-\int_{0}^{T}\int_{\Omega} \vec{V}_{\veps ,M}\otimes \vec{V}_{\veps ,M}: \nabla_{x}\vec{\psi}\,&=\,\int_{0}^{T}\int_{\Omega} \div\left(\vec{V}_{\veps ,M}\otimes
\vec{V}_{\veps ,M}\right) \cdot \vec{\psi}\,.
\end{align*}
Notice that the integration by parts above is well-justified, since all the quantities inside the integrals are smooth with respect to the space variable. Owing to the structure of the test function, and
resorting to the notation introduced in \eqref{eq:decoscil}, we remark that we can write
$$
\int_{0}^{T}\int_{\Omega} \div\left(\vec{V}_{\veps ,M}\otimes \vec{V}_{\veps ,M}\right) \cdot \vec{\psi}\,=\,
\int_{0}^{T}\int_{\R^2} \left(\mc{T}_{\veps ,M}^{1}+\mc{T}_{\veps, M}^{2}\right)\cdot\vec{\psi}^h\,,
$$
where we have defined the terms
\begin{equation} \label{def:T1-2}
\mc T^1_{\veps,M}\,:=\, \divh\left(\langle \vec{V}_{\veps ,M}^{h}\rangle\otimes \langle \vec{V}_{\veps ,M}^{h}\rangle\right)\qquad \mbox{ and }\qquad
\mc T^2_{\veps,M}\,:=\, \divh\left(\langle \widetilde{\vec{V}}_{\veps ,M}^{h}\otimes \widetilde{\vec{V}}_{\veps ,M}^{h}\rangle \right)\,.
\end{equation}

In the next two paragraphs, we will deal with each one of those terms separately. We borrow most of the arguments from \cite{DS-F-S-WK}
(see also \cite{F-G-GV-N}, \cite{F_2019} for a similar approach). However, the special structure of the gravity force will play a key role here,
in order (loosely speaking) to compensate the stronger singularity due to our scaling $2n>m$.

\subsubsection{Convergence of the vertical averages}\label{sss:term1}
We start by dealing with $\mc T^1_{\veps,M}$. It is standard to write
\begin{align}
\mc{T}_{\veps ,M}^{1}\,&=\,\divh\left(\langle \vec{V}_{\veps ,M}^{h}\rangle\otimes \langle \vec{V}_{\veps ,M}^{h}\rangle\right)=
\divh\langle \vec{V}_{\veps ,M}^{h}\rangle\, \langle \vec{V}_{\veps ,M}^{h}\rangle+\langle \vec{V}_{\veps ,M}^{h}\rangle \cdot \nabla_{h}\langle \vec{V}_{\veps ,M}^{h}\rangle \label{eq:T1} \\
&=\divh\langle \vec{V}_{\veps ,M}^{h}\rangle\, \langle \vec{V}_{\veps ,M}^{h}\rangle+\frac{1}{2}\, \nabla_{h}\left(\left|\langle \vec{V}_{\veps ,M}^{h}\rangle\right|^{2}\right)+
\curlh\langle \vec{V}_{\veps ,M}^{h}\rangle\,\langle \vec{V}_{\veps ,M}^{h}\rangle^{\perp}\,. \nonumber
\end{align}
Notice that the second term is a perfect gradient, so it vanishes when tested against divergence-free test functions. Hence, we can treat it as
a remainder, in the sense of \eqref{eq:reminder}.

For the first term in the second line of \eqref{eq:T1}, instead, we take advantage of system \eqref{eq:approx wave}: averaging the first equation with respect to $x^{3}$ and multiplying it by $\langle \vec{V}^h_{\veps ,M}\rangle$, we arrive at
$$
\divh\langle \vec{V}_{\veps ,M}^{h}\rangle\,\langle \vec{V}_{\veps ,M}^{h}\rangle\,=\,-\veps^m\d_t\langle \vrm\rangle \langle \vec{V}_{\veps ,M}^{h}\rangle\,=\,
\veps^m\langle \vrm\rangle \d_t \langle \vec{V}_{\veps ,M}^{h}\rangle +\mc{R}_{\veps ,M}\,,
$$
in the sense of distributions.
We remark that the term presenting the total derivative in time is in fact a remainder, thanks to the factor $\veps^m$ in front of it.
Now, we use the horizontal part of \eqref{eq:approx wave}, where we  first take the vertical average and then multiply by $\langle \vrm\rangle$:
since $m>1$, we gather
\begin{align*}
&\veps^m\langle \vrm \rangle \d_t \langle \vec{V}_{\veps ,M}^{h}\rangle \\
&\qquad\quad=
- \langle \vrm\rangle \nabla_{h}\langle \vrm \rangle-
\veps^{m-1}\langle \vrm \rangle\langle \vec{V}_{\veps ,M}^{h}\rangle^{\perp} 
+\veps^{m}\langle \vrm \rangle \langle \vec f_{\veps ,M}^{h}\rangle+\veps^{2(m-n)}\langle \vrm \rangle \langle \vec g_{\veps ,M}^{h}\rangle\\
&\qquad\quad=-\veps^{m-1}\langle \vrm \rangle\langle \vec{V}_{\veps ,M}^{h}\rangle^{\perp}+\mc{R}_{\veps ,M}\, ,
\end{align*}
where we have repeatedly exploited the properties proved in Proposition \ref{p:prop approx} and we have included in the remainder term also the perfect gradient.
Inserting this relation into \eqref{eq:T1} yields
\begin{equation*}
\mc{T}_{\veps ,M}^{1}=  \left(\curlh\langle \vec{V}_{\veps ,M}^{h}\rangle\,-\,\veps^{m-1}\langle \vrm \rangle\right)   
\langle\vec{V}_{\veps,M}^{h}\rangle^{\perp}+\mc{R}_{\veps,M}\,.
\end{equation*}
Observe that the first term appearing in the right-hand side of the previous relation is bilinear. Thus, in order to pass to the limit in it, one needs some strong convergence property. As a matter of fact, in the next computations we will work on the regularised wave system \eqref{eq:approx wave} to show that the quantity
\[
\gamma_{\veps, M}:=\curlh\langle \vec{V}_{\veps ,M}^{h}\rangle\,-\,\veps^{m-1}\langle \vrm \rangle
\]
is \emph{compact} in some suitable space. In particular, as $m>1$, also $\curlh\langle \vec{V}_{\veps ,M}^{h}\rangle$ is compact.

In order to see this, we write the vertical average of the first equation in \eqref{eq:approx wave} as
\begin{equation*}
\veps^{2m-1}\,\d_t \langle \vrm \rangle\,+\,\veps^{m-1}\div_{h} \langle \vec{V}_{\veps ,M}^{h}\rangle\,=0\,.
\end{equation*}
Next, we take the vertical average of the horizontal components of the second equation in \eqref{eq:approx wave} and then apply $\curlh$: we obtain
\begin{equation*}
\veps^m\,\d_t\curlh\langle \vec{V}_{\veps ,M}^{h}\rangle\,+\veps^{m-1}\,\divh\langle \vec{V}_{\veps ,M}^{h}\rangle\, =\,\veps^m \curlh\langle\vec f_{\veps ,M}^{h}\rangle+\veps^{2(m-n)} \curlh\langle\vec g_{\veps ,M}^{h}\rangle\, .
\end{equation*}
At this point, we recall the definition \eqref{def_f-g} of $\vec g_\veps$, and we see that $\curlh\langle\vec g_{\veps ,M}^{h}\rangle\,\equiv\,0$.
This property is absolutely fundamental, since it allows to erase the last term in the previous relation, which otherwise would have represented an obstacle to get compactness
of the $\g_{\veps,M}$'s. Indeed, thanks to this observation, we can sum up the last two equations to get
\begin{equation} \label{eq:gamma}
\d_{t}\gamma_{\veps,M}\,=\,\curlh\langle \vec f_{\veps ,M}^{h}\rangle\, .
\end{equation}
Using estimate \eqref{eq:approx force} in Proposition \ref{p:prop approx}, we discover that, for any $M>0$ fixed, the family 
$\left(\d_{t}\,\gamma_{\veps,M}\right)_{\veps}$ is uniformly bounded (with respect to $\veps$) in \tsl{e.g.} the space $L_{T}^{2}(L^{2})$. 
On the other hand, we have that, again for any $M>0$ fixed,
the sequence $(\gamma_{\veps,M})_{\veps}$ is uniformly bounded (with respect to $\veps$) \tsl{e.g.} in the space $L_{T}^{2}(H^{1})$.
Since the embedding $H_{\rm loc}^{1}\hookrightarrow L_{\rm loc}^{2}$ is compact, the Aubin-Lions Lemma implies that, for any $M>0$ fixed, the family $(\gamma_{\veps,M})_{\veps}$ is compact
in $L_{T}^{2}(L_{\rm loc}^{2})$. Then, up to extraction of a suitable subsequence (not relabelled here), that family converges strongly to a tempered distribution $\gamma_M$ in the same space. 

Now, we have that $\gamma_{\veps ,M}$ converges strongly to $\gamma_M$ in $L_{T}^{2}(L_{\rm loc}^{2})$ and $\langle \vec{V}_{\veps ,M}^{h}\rangle$ converges weakly to
$\langle \vec{V}_{M}^{h}\rangle$ in $L_{T}^{2}(L_{\rm loc}^{2})$ (owing to Proposition \ref{p:prop dec}, for instance). Then, we deduce that
\begin{equation*}
\gamma_{\veps,M}\langle \vec{V}_{\veps ,M}^{h}\rangle^{\perp}\longrightarrow \gamma_M \langle \vec{V}_{M}^{h}\rangle^{\perp}\qquad \text{ in }\qquad \mc{D}^{\prime}\big(\R_+\times\R^2\big)\,.
\end{equation*}
Observe that, by definition of $\g_{\veps,M}$, we must have $\gamma_M=\curlh\langle \vec{V}_{M}^{h}\rangle$. On the other hand, owing to Proposition \ref{p:prop dec} and \eqref{eq:t-T}, we know that $\langle \vec{V}_{M}^{h}\rangle= \lan{S}_{M}\vec{U}^{h}\ran$.
Therefore,  in the end we have proved that, for $m>1$ and $m+1\geq 2n >m$,
one has the convergence (at any $M\in\N$ fixed, when $\veps\ra0^+$)
\begin{equation} \label{eq:limit_T1}
\int_{0}^{T}\int_{\R^2}\mc{T}_{\veps ,M}^{1}\cdot\vec{\psi}^h\,dx^h\,dt\,\longrightarrow\,
\int^T_0\int_{\R^2}\curlh\lan{S}_{M}\vec{U}^{h}\ran\; \lan{S}_{M}(\vec{U}^{h})^{\perp}\ran\cdot\vec\psi^h\,dx^h\,dt\, ,
\end{equation}
for any $T>0$ and for any test-function $\vec \psi$ as in \eqref{eq:test-f}.

\subsubsection{Vanishing of the oscillations}\label{sss:term2}

We now focus on the term $\mc{T}_{\veps ,M}^{2}$, defined in \eqref{def:T1-2}. Recall that $m>1$. In what follows, we consider separately the two cases  $m+1>2n$ and $m+1=2n$. As a matter of fact, in the case when $m+1=2n$, a bilinear term involving $\vec g_{\veps,M}$ has no power of $\veps$ in front of it, so it is not clear that it converges to $0$ and, in fact, it might persist in the limit, giving rise to an additional term in the target system. To overcome this issue and show that this actually does not happen, we deeply exploit the structure of the wave system to recover a quantitative smallness for that term (namely, in terms of positive powers of $\veps$).

\subsubsection*{The case $m+1>2n$}\label{sss:term2_osc}

Starting from the definition of $\mc T_{\veps,M}^2$, the same computations as above yield
\begin{align}
\mc{T}_{\veps ,M}^{2}\,
&=\,\langle \divh (\widetilde{\vec{V}}_{\veps ,M}^{h})\;\;\widetilde{\vec{V}}_{\veps ,M}^{h}\rangle+\frac{1}{2}\, \langle \nabla_{h}| \widetilde{\vec{V}}_{\veps ,M}^{h}|^{2} \rangle+
\langle \curlh\widetilde{\vec{V}}_{\veps ,M}^{h}\,\left( \widetilde{\vec{V}}_{\veps ,M}^{h}\right)^{\perp}\rangle\, . \label{eq:T2} 
\end{align}

Let us now introduce the quantities
$$
\widetilde{\Phi}_{\veps ,M}^{h}\,:=\,( \widetilde{\vec{V}}_{\veps ,M}^{h})^{\perp}-\d_{3}^{-1}\nabla_{h}^{\perp}\widetilde{\vec{V}}_{\veps ,M}^{3}\qquad\mbox{ and }\qquad
\widetilde{\omega}_{\veps ,M}^{3}\,:=\,\curlh \widetilde{\vec{V}}_{\veps ,M}^{h}\,.
$$
Then we can write
\begin{equation*}
\left( \curl \widetilde{\vec{V}}_{\veps ,M}\right)^{h}\,=\,\d_3 \widetilde{\Phi}_{\veps ,M}^{h}\qquad \text{ and }\qquad
\left( \curl \widetilde{\vec{V}}_{\veps ,M}\right)^{3}\,=\,\widetilde{\omega}_{\veps ,M}^{3}\,.
\end{equation*}
In addition, from the momentum equation in \eqref{eq:approx wave}, where we take the mean-free part and then the $\curl$, we deduce the equations
\begin{equation} \label{eq:eq momentum term2}
\begin{cases}
\veps^{m}\d_t\widetilde{\Phi}_{\veps ,M}^{h}-\veps^{m-1}\widetilde{\vec{V}}_{\veps ,M}^{h}=\veps^m\left(\d_{3}^{-1}\curl\widetilde{\vec f}_{\veps ,M} \right)^{h}+\veps^{2(m-n)}\left(\d_{3}^{-1}\curl\widetilde{\vec g}_{\veps ,M} \right)^{h}\\[1ex]
\veps^{m}\d_t\widetilde{\omega}_{\veps ,M}^{3}+\veps^{m-1}\divh\widetilde{\vec{V}}_{\veps ,M}^{h}=\veps^m\,\curlh\widetilde{\vec f}_{\veps ,M}^{h}\, .
\end{cases}
\end{equation}
Making use of the relations above, recalling the definitions in \eqref{def_f-g}, and thanks to Propositions \ref{p:prop approx} and \ref{p:prop dec}, we can write
\begin{equation}\label{rel_oscillations}
\begin{split}
\curlh\widetilde{\vec{V}}_{\veps ,M}^{h}\;\left(\widetilde{\vec{V}}_{\veps ,M}^{h}\right)^{\perp}&=\widetilde{\omega}_{\veps ,M}^{3}\left(\widetilde{\vec{V}}_{\veps ,M}^{h}\right)^{\perp}\,\\
&=\veps \d_t\!\left( \widetilde{\Phi}_{\veps ,M}^{h}\right)^{\perp}\widetilde{\omega}_{\veps ,M}^{3}-
\veps\widetilde{\omega}_{\veps ,M}^{3}\left(\left(\d_{3}^{-1}\curl\widetilde{\vec f}_{\veps ,M}\right)^{h}\right)^\perp\\
&\qquad\qquad\qquad\qquad\qquad\qquad
-\veps^{m+1-2n}\, \widetilde{\omega}_{\veps ,M}^{3}\left(\left(\d_{3}^{-1}\curl\widetilde{\vec g}_{\veps ,M}\right)^{h}\right)^\perp  \\
&=-\veps \left( \widetilde{\Phi}_{\veps ,M}^{h}\right)^{\perp}\d_t\widetilde{\omega}_{\veps ,M}^{3}+\mc{R}_{\veps ,M}=
\left( \widetilde{\Phi}_{\veps ,M}^{h}\right)^{\perp}\,\divh\widetilde{\vec{V}}_{\veps ,M}^{h}+\mc{R}_{\veps ,M}\,,
\end{split}
\end{equation}
where, again, the equalities hold in the sense of distributions.
We point out that, thanks to the scaling $m+1>2n$, we could include in the remainder also the last term appearing in the second equality, which was of order $O(\veps^{m+1-2n})$. 

Hence, putting the gradient term into $\mc R_{\veps,M}$, from \eqref{eq:T2} we arrive at 
\begin{equation}\label{002_T}
\begin{split}
\mc{T}_{\veps ,M}^{2}\,&=\,\langle \divh\widetilde{\vec{V}}_{\veps ,M}^{h}\,\left(\widetilde{\vec{V}}_{\veps ,M}^{h}+\left(\widetilde{\Phi}_{\veps ,M}^{h}\right)^{\perp}\right) \rangle+\mc{R}_{\veps ,M} \\
&=\,\langle \div \widetilde{\vec{V}}_{\veps ,M}\left(\widetilde{\vec{V}}_{\veps ,M}^{h}+\left(\widetilde{\Phi}_{\veps ,M}^{h}\right)^{\perp}\right) \rangle -
\langle \d_3 \widetilde{\vec{V}}_{\veps ,M}^{3}\left(\widetilde{\vec{V}}_{\veps ,M}^{h}+\left(\widetilde{\Phi}_{\veps ,M}^{h}\right)^{\perp}\right) \rangle+\mc{R}_{\veps ,M}\, .
\end{split}
\end{equation}

At this point, the computations mainly follow the same lines of \cite{F-G-GV-N} (see also \cite{F_2019}, \cite{DS-F-S-WK}).
First of all, we notice that (in the last line) the second term on the right-hand side is another remainder. Indeed, using the definition of the function $\widetilde{\Phi}_{\veps ,M}^{h}$ and the fact
that the test function $\vec\psi$ does not depend on $x^3$, one has
\begin{equation*}
\begin{split}
\d_3 \widetilde{\vec{V}}_{\veps ,M}^{3}\left(\widetilde{\vec{V}}_{\veps ,M}^{h}+\left(\widetilde{\Phi}_{\veps ,M}^{h}\right)^{\perp}\right)&=\d_3 \left(\widetilde{\vec{V}}_{\veps ,M}^{3}\left(\widetilde{\vec{V}}_{\veps ,M}^{h}+\left(\widetilde{\Phi}_{\veps ,M}^{h}\right)^{\perp}\right)\right) - \widetilde{\vec{V}}_{\veps ,M}^{3}\, \d_3\left(\widetilde{\vec{V}}_{\veps ,M}^{h}+\left(\widetilde{\Phi}_{\veps ,M}^{h}\right)^{\perp}\right)\\
&=\mc{R}_{\veps ,M}-\frac{1}{2}\nabla_{h}\left|\wtilde{\vec{V}}_{\veps ,M}^{3}\right|^{2}=\mc{R}_{\veps ,M}\, .
\end{split}
\end{equation*}
Next, in order to deal with the first term on the right-hand side of \eqref{002_T}, we use the first equation in \eqref{eq:approx wave} to obtain
(in the sense of distributions)
\begin{equation*}
\begin{split}
\div \widetilde{\vec{V}}_{\veps ,M}\left(\widetilde{\vec{V}}_{\veps ,M}^{h}+\left(\widetilde{\Phi}_{\veps ,M}^{h}\right)^{\perp}\right)&=-\veps^{m} \d_t \widetilde{\vrho}^{(1)}_{\veps ,M}\left(\widetilde{\vec{V}}_{\veps ,M}^{h}+\left(\widetilde{\Phi}_{\veps ,M}^{h}\right)^{\perp}\right)+\mc{R}_{\veps ,M}\\
&=\veps^{m} \widetilde{\vrho}_{\veps,M}^{(1)}\, \d_t\left(\widetilde{\vec{V}}_{\veps ,M}^{h}+\left(\widetilde{\Phi}_{\veps ,M}^{h}\right)^{\perp}\right)+\mc{R}_{\veps ,M}\,.
\end{split}
\end{equation*}
Now, equations \eqref{eq:approx wave} and \eqref{eq:eq momentum term2} immediately yield that
\begin{equation*}
\veps^{m}\widetilde{\vrho}^{(1)}_{\veps ,M}\, \d_t\left(\widetilde{\vec{V}}_{\veps ,M}^{h}+\left(\widetilde{\Phi}_{\veps ,M}^{h}\right)^{\perp}\right)=
\mc{R}_{\veps ,M}-\widetilde{\vrho}_{\veps ,M}^{(1)}\, \nabla_{h}\widetilde{\vrho}^{(1)}_{\veps ,M}=
\mc{R}_{\veps ,M}-\frac{1}{2}\nabla_{h}\left|\widetilde{\vrho}_{\veps ,M}^{(1)}\right|^{2}=\mc{R}_{\veps ,M}\,.
\end{equation*}

This relation finally implies that $\mc{T}_{\veps ,M}^{2}\,=\,\mc R_{\veps,M}$ is a remainder, in the sense of relation \eqref{eq:reminder}:
for any $T>0$ and any test-function $\vec \psi$ as in \eqref{eq:test-f},
one has the convergence
(at any $M\in\N$ fixed, when $\veps\ra0^+$)
\begin{equation} \label{eq:limit_T2}
\int_{0}^{T}\int_{\R^2}\mc{T}_{\veps ,M}^{2}\cdot\vec{\psi}^h\,dx^h\,dt\,\longrightarrow\,0\,.
\end{equation}

\subsubsection*{The case $m+1=2n$}\label{sss:term2_osc_bis}
In the case $m+1=2n$, most of the previous computations may be reproduced exactly in the same way.
The only (fundamental) change concerns relation \eqref{rel_oscillations}: since now $m+1-2n=0$, that equation now reads
\begin{equation}\label{rel_oscillations_bis}
\begin{split}
\curlh\widetilde{\vec{V}}_{\veps ,M}^{h}\;\left(\widetilde{\vec{V}}_{\veps ,M}^{h}\right)^{\perp}\,=\,\left( \widetilde{\Phi}_{\veps ,M}^{h}\right)^{\perp}\,\divh\widetilde{\vec{V}}_{\veps ,M}^{h}\,- \widetilde{\omega}_{\veps ,M}^{3}\left(\left(\d_{3}^{-1}\curl\widetilde{\vec g}_{\veps ,M}\right)^{h}\right)^\perp+\mc{R}_{\veps ,M}\,,
\end{split}
\end{equation}
and, repeating the same computations performed for $\mc T^2_{\veps, M}$ in the previous paragraph, we have
\begin{equation*}\label{T^2-bis}
\mc T^2_{\veps, M}= \mc R_{\veps, M}-\langle\widetilde{\omega}_{\veps ,M}^{3}\left(\left(\d_{3}^{-1}\curl\widetilde{\vec g}_{\veps ,M}\right)^{h}\right)^\perp \rangle\, .
\end{equation*}
Hence, the main difference with respect to the previous case is that, now, we have to take care of the term $\widetilde{\omega}_{\veps ,M}^{3}\left(\left(\d_{3}^{-1}\curl\widetilde{\vec g}_{\veps ,M}\right)^{h}\right)^\perp $, which is non-linear and of order $O(1)$, so it may potentially
give rise to oscillations which persist in the limit.

In order to show that this does not happen, we make use of definition \eqref{def_f-g} of $\vec g_{\veps}$ to compute
\begin{align*}
\left(\curl\widetilde{\vec g}_{\veps ,M}\right)^{h,\perp}\,&=\,\left(\curl \left(\widetilde{\vrho}_{\veps, M}^{(1)}\nabla_x G-\nabla_x \widetilde{\Pi}_{\veps,M}\right)\right)^{h,\perp} \\
&=\,
\begin{pmatrix}
-\d_2 \widetilde{\vrho}^{(1)}_{\veps, M} \\ 
\d_1 \widetilde{\vrho}^{(1)}_{\veps ,M} \\ 
0
\end{pmatrix}^{h,\perp}\,=\,-\,\nabla_h \widetilde{\vrho}_{\veps,M}^{(1)}\, .
\end{align*}

From this relation, in turn we get
\begin{equation}\label{T^2}
\mc T^2_{\veps, M}\,=\,\mc R_{\veps, M}\,+\,\langle \widetilde{\omega}_{\veps ,M}^{3}\,  \d_3^{-1}\nabla_h \widetilde{\vrho}_{\veps,M}^{(1)} \rangle \, .
\end{equation} 
Now, we have to employ the potential part of the momentum equation in \eqref{eq:approx wave}, which has not been used so far. Taking the oscillating
component of the solutions, we obtain 
\begin{equation*}
\nabla_h \widetilde{\vrho}_{\veps,M}^{(1)}\,=-\, \veps^m\,\d_t\widetilde{\vec{V}}^h_{\veps ,M}\,-\veps^{m-1} (\widetilde{\vec{V}}^h_{\veps ,M})^\perp+\veps^m\,\widetilde{\vec f}^h_{\veps ,M}\,+\veps^{2(m-n)} \widetilde{\vec g}^h_{\veps,M}= -\, \veps^m\,\d_t\widetilde{\vec{V}}^h_{\veps ,M}\,+ \mc R_{\veps,M}\,.
\end{equation*}
Inserting this relation into \eqref{T^2} and using \eqref{eq:eq momentum term2}, we finally gather
\begin{equation*}
\mc T^2_{\veps, M}=-\veps^m \langle \widetilde{\omega}_{\veps ,M}^{3}\, \d_t\d_3^{-1}\widetilde{\vec{V}}^h_{\veps ,M}  \rangle +\mc R_{\veps,M}=
\veps^m \langle \d_t \widetilde{\omega}_{\veps ,M}^{3}\, \d_3^{-1}\widetilde{\vec{V}}^h_{\veps ,M}  \rangle +\mc R_{\veps,M}=\mc R_{\veps,M}\, ,
\end{equation*}
because we have taken $m>1$.

This relation finally implies that, also in the case when $m+1=2n$, $\mc{T}_{\veps ,M}^{2}$ is a remainder: for any $T>0$ and any test-function $\vec \psi$ as in \eqref{eq:test-f}, one has the convergence \eqref{eq:limit_T2}.

\subsection{The limit system} \label{ss:limit} 
Thanks to the computations of the previous subsection, we can now pass to the limit in equation \eqref{weak-mom}. Recall that $m>1$ and $m+1\geq 2n >m$ here.

To begin with, we take a divergence-free test function $\vec\psi$ as in \eqref{eq:test-f}, specifically
\begin{equation} \label{eq:test-2}
\vec{\psi}=\big(\nabla_{h}^{\perp}\phi,0\big)\,,\qquad\qquad\mbox{ with }\qquad \phi\in C_c^\infty\big([0,T[\,\times\R^2\big)\,,\quad \phi=\phi(t,x^h)\,.
\end{equation}
We point out that, since all the integrals will be made on $\R^2$ (in view of the choice of the test functions in \eqref{eq:test-2} above), we can safely work on the domain $\Omega=\R^2\times \, ]0,1[\,$.
In addition, for $\vec\psi$ as in \eqref{eq:test-2}, all the gradient terms vanish identically, as well as all the contributions
due to the vertical component of the equation. In particular, we do not see any contribution of the pressure and gravity terms: equation \eqref{weak-mom} becomes
\begin{align}
\int_0^T\!\!\!\int_{\Omega}  
& \left( -\vre \ue^h \cdot \partial_t \vec\psi^h -\vre \ue^h\otimes\ue^h  : \nabla_h \vec\psi^h
+ \frac{1}{\ep}\vre\big(\ue^{h}\big)^\perp\cdot\vec\psi^h\right)\, dx \, dt \label{eq:weak_to_conv}\\
&\qquad\qquad\qquad\qquad =-\int_0^T\!\!\!\int_{\Omega} 
\mbb{S}(\nabla_x\vec\ue): \nabla_x \vec\psi\,dx\,dt+
\int_{\Omega}\vrez \uez  \cdot \vec\psi(0,\cdot)\,dx\,. \nonumber
\end{align}

Making use of the uniform bounds of Section \ref{ss:unif-est}, we can pass to the limit in the $\d_t$ term and in the viscosity term.
Moreover, our assumptions imply that $\vrho_{0,\veps}\vec{u}_{0,\veps}\rightharpoonup \vec{u}_0$ in e.g. $L_{\rm loc}^2$. 
Next, the Coriolis term can be dealt with in a standard way: using the structure of $\vec\psi$ and the mass equation \eqref{weak-con}, we can write
\begin{align*}
\int_0^T\!\!\!\int_{\Omega}\frac{1}{\ep}\vre\big(\ue^{h}\big)^\perp\cdot\vec\psi^h\,&=\,\int_0^T\!\!\!\int_{\mbb{R}^2}\frac{1}{\ep}\langle\vre \ue^{h}\rangle \cdot \nabla_{h}\phi\,=\,
-\veps^{m-1}\int_0^T\!\!\!\int_{\mbb{R}^2}\langle \vrho^{(1)}_\veps\rangle\, \d_t\phi\,-\,\veps^{m-1}\int_{\mbb{R}^2}\langle  \vrho^{(1)}_{0,\veps}\rangle\, \phi(0,\cdot )\,, 
\end{align*}
which of course converges to $0$ when $\veps\ra0^+$.

It remains us to tackle the convective term $\vrho_\veps \ue^h \otimes \ue^h$.
For it, we take  advantage of Lemma \ref{lem:convterm} and relations \eqref{eq:limit_T1} and \eqref{eq:limit_T2}, but
we still have to take care of the convergence for $M\ra+\infty$ in \eqref{eq:limit_T1}.
We start by performing equalities \eqref{eq:T1} backwards in the term on the right-hand side of \eqref{eq:limit_T1}: thus, we have to pass to the limit for $M\ra+\infty$
in
\[
\int^T_0\int_{\R^2}\vec U_M^h\otimes\vec U_M^h : \nabla_h \vec\psi^h\,\dx^h\,\dt\,.
\]
Now, we remark that, since $\vec U^h\in L^2_T(H^1)$ by \eqref{conv:u}, from \eqref{eq:LP-Sob} we gather the strong convergence
$S_M \vec U^h\longrightarrow \vec{U}^{h}$ in $L_{T}^{2}(H^{s})$ for any $s<1$, in the limit for $M\rightarrow +\infty$.
Then, passing to the limit for $M\ra+\infty$ in the previous relation is an easy task: we finally get that, for $\veps\ra0^+$, one has
\begin{equation*}
\int_0^T\int_{\Omega} \vre \ue^h\otimes\ue^h  : \nabla_h \vec\psi^h\, \longrightarrow\, \int_0^T\int_{\R^2}\vec{U}^h\otimes\vec{U}^h  : \nabla_h \vec\psi^h\,.
\end{equation*}

In the end, we have shown that, letting $\varepsilon \rightarrow 0^+$ in \eqref{eq:weak_to_conv}, one obtains
\begin{align*}
&\int_0^T\!\!\!\int_{\R^2} \left(\vec{U}^{h}\cdot \d_{t}\vec\psi^h+\vec{U}^{h}\otimes \vec{U}^{h}:\nabla_{h}\vec\psi^h\right)\, dx^h\, dt=
\int_0^T\!\!\!\int_{\R^2} \mu \nabla_{h}\vec{U}^{h}:\nabla_{h}\vec\psi^h \, dx^h\, dt-
\int_{\R^2}\lan\vec{u}_{0}^{h}\ran\cdot \vec\psi^h(0,\cdot)\,dx^h\, ,
\end{align*}
for any test function $\vec\psi$ as in \eqref{eq:test-f}.
This implies \eqref{eq_lim_m:momentum}, concluding the proof of Theorem \ref{th:m>1}.

\section{Proof of the convergence for $m=1$} \label{s:proof-1}

In the present section, we complete the proof of the convergence in the case $m=1$ and $1/2<n<1$.
We will use again the compensated compactness argument depicted in Section \ref{ss:convergence},
and in fact most of the computations apply also in this case.

\subsection{Analysis of the acoustic-Poincar\'e waves}\label{ss:unifbounds_1} 

When $m=1$, the wave system \eqref{eq:wave_syst} takes the form
\begin{equation} \label{eq:wave_m=1}
\left\{\begin{array}{l}
       \veps\,\d_t \vrho_\veps^{(1)}\,+\,\div\vec{V}_\veps\,=\,0 \\[1ex]
       \veps\,\d_t\vec{V}_\veps\,+\,\nabla_x \vrho^{(1)}_\veps\,+\,\,\e_3\times \vec V_\veps\,=\,\veps\,\vec f_\veps+\veps^{2(1-n)}\vec g_\veps\,,
       \end{array}
\right.
\end{equation}
where $\bigl(\vrho^{(1)}_\veps\bigr)_\veps$  and $\bigl(\vec V_\veps\bigr)_\veps$ are defined as in Section \ref{sss:wave-eq}.
This system is supplemented with the initial datum $\big(\vrho^{(1)}_{0,\veps},\vr_{0,\veps}\vec u_{0,\veps}\big)$.

Next, we regularise all the quantities, by applying the Littlewood-Paley cut-off operator $S_M$ to \eqref{eq:wave_m=1}: we deduce that $\vrho^{(1)}_{\veps,M}$ and $\vec V_{\veps,M}$, defined as in \eqref{def_reg_vrho-V}, satisfy the regularised wave system
\begin{equation} \label{eq:reg-wave}
\left\{\begin{array}{l}
       \veps\,\d_t \vrho_{\veps,M}^{(1)}\,+\,\div\vec{V}_{\veps,M}\,=\,0 \\[1ex]
       \veps\,\d_t\vec{V}_{\veps,M}\,+\,\nabla_x \vrho^{(1)}_{\veps,M}\,+\,\,\e_3\times \vec V_{\veps,M}\,=\,\veps\,\vec f_{\veps,M}+\veps^{2(1-n)}\vec g_{\veps,M}\, ,
       \end{array}
\right.
\end{equation}
in the domain $\R_+\times\Omega$, where we recall that $\vec f_{\veps,M}:=S_M \vec f_\veps$ and $\vec g_{\veps,M}:=S_M \vec g_\veps$.
It goes without saying that a result similar to Proposition \ref{p:prop approx} holds true also in this case.

As it is apparent from the wave system \eqref{eq:wave_m=1} and its regularised version, when $m=1$ the pressure term and the Coriolis term are in balance, since they are of the same order. This represents the main change with respect to the case $m>1$, and it comes into play in the compensated compactness argument. Therefore, despite most of the computations may be repeated identical as in the previous section, let us present
the main points of the argument.

\subsection{Handling the convective term when $m=1$} \label{ss:convergence_1}

Let us take care of the convergence of the convective term in the case when $m=1$. 

First of all, it is easy to see that Lemma \ref{lem:convterm} still holds true. Therefore, 
given a test function $\vec\psi\in C_c^\infty\big([0,T[\,\times\Omega;\R^3\big)$ such that $\div\vec\psi=0$ and $\d_3\vec\psi=0$,
we have to pass to the limit in the term
\begin{align*}
-\int_{0}^{T}\int_{\Omega} \vec{V}_{\veps ,M}\otimes \vec{V}_{\veps ,M}: \nabla_{x}\vec{\psi}\,&=\,
\int_{0}^{T}\int_{\Omega}\div\left(\vec{V}_{\veps ,M}\otimes \vec{V}_{\veps ,M}\right) \cdot \vec{\psi}\,=\,
\int_{0}^{T}\int_{\R^2} \left(\mc{T}_{\veps ,M}^{1}+\mc{T}_{\veps, M}^{2}\right)\cdot\vec{\psi}^h\,,
\end{align*}
where we agree again that the torus $\T$ has been normalised so that its Lebesgue measure is equal to $1$ and we have adopted the same notation as in \eqref{def:T1-2}.

At this point, we notice that the analysis of $\mc{T}_{\veps ,M}^{2}$ can be performed as in Section \ref{sss:term2}, because we have
$m+1>2n$, \tsl{i.e.} $n<1$. \tsl{Mutatis mutandis}, we find relation \eqref{eq:limit_T2} also in the case $m=1$.

Let us now deal with the term $\mc{T}_{\veps ,M}^{1}$. Arguing as in Section \ref{sss:term1}, we may write it as
\begin{equation*}
\mc{T}_{\veps ,M}^{1}\,=\,\left(\curlh\langle \vec{V}_{\veps ,M}^{h}\rangle-\langle \vrho^{(1)}_{\veps ,M}\rangle \right)\langle \vec{V}_{\veps ,M}^{h}\rangle^{\perp}+\mc{R}_{\veps ,M} .
\end{equation*}
Now we use the horizontal part of \eqref{eq:reg-wave}: 
averaging it with respect to the vertical variable and applying the operator $\curlh$, we find
\begin{equation*}
\veps\,\d_t\curlh\langle \vec{V}_{\veps ,M}^{h}\rangle\,+\,\divh\langle \vec{V}_{\veps ,M}^{h}\rangle \,=\,
\veps\, \curlh\langle \vec f_{\veps ,M}^{h}\rangle\, .
\end{equation*}
Taking the difference of this equation with the first one in \eqref{eq:reg-wave}, we discover that
\begin{equation*}
\d_t\wtilde\g_{\veps,M}
\,=\,\curlh\langle \vec f_{\veps ,M}^{h}\rangle\,,\qquad\qquad \mbox{ where }\qquad
\wtilde\gamma_{\veps, M}:=\curlh\langle \vec{V}_{\veps ,M}^{h}\rangle\,-\,\langle \vrho^{(1)}_{\veps ,M}\rangle\,.
\end{equation*}
An argument analogous to the one used after \eqref{eq:gamma} above, based on Aubin-Lions Lemma, shows that
$\big(\wtilde\gamma_{\veps,M}\big)_{\veps}$ is compact in \tsl{e.g.} $L_{T}^{2}(L_{\rm loc}^{2})$. Then, this sequence converges strongly (up to extraction of a suitable subsequence, not relabelled here) to a tempered distribution $\wtilde\gamma_M$ in the same space. 

Using the previous property, we may deduce that
\begin{equation*}
\wtilde\gamma_{\veps,M}\,\langle \vec{V}_{\veps ,M}^{h}\rangle^{\perp}\,\longrightarrow\, \wtilde\gamma_M\, \langle \vec{V}_{M}^{h}\rangle^{\perp}\qquad \text{ in }\qquad \mc{D}^{\prime}\big(\R_+\times\R^2\big),
\end{equation*}
where we have $\langle \vec{V}_{M}^{h}\rangle=\lan S_M\vec{U}^{h}\ran$ and $\wtilde\gamma_M=\curlh\lan S_M \vec{U}^{h}\ran-\langle \vrho^{(1)}_{M}\rangle$.

Owing to the regularity of the target velocity $\vec U^h$, we can pass to the limit also for $M\ra+\infty$, as detailed in Section \ref{ss:limit} above. Thus, we find
\begin{equation} \label{eq:limit_T1-1}
\int^T_0\!\!\!\int_{\Omega}\vrho_\veps\,\vec{u}_\veps\otimes \vec{u}_\veps: \nabla_{x}\vec{\psi}\, dx \, dt\,\longrightarrow\,
\int^T_0\!\!\!\int_{\R^2}\big(\vec U^h\otimes\vec U^h:\nabla_h\vec\psi^h\,-\, \vrho^{(1)}\,(\vec U^h)^\perp\cdot\vec\psi^h\big)\,dx^h\,dt,
\end{equation}
for all test functions $\vec\psi$ such that $\div\vec\psi=0$ and $\d_3\vec\psi=0$. Recall the convention $|\T|=1$.
Notice that, since $\vec U^h=\nabla_h^\perp \vrho^{(1)}$ when $m=1$ (keep in mind Proposition \ref{p:limit_iso}), the last term in the integral on the right-hand side is actually zero.

\subsection{End of the proof} \label{ss:limit_1}
Thanks to the previous analysis, we are now ready to pass to the limit in equation \eqref{weak-mom}.
For this, we take a test-function $\vec\psi$ as in \eqref{eq:test-2};
notice in particular that $\div\vec\psi=0$ and $\d_3\vec\psi=0$. Then, once again all the gradient terms and all the contributions coming from the vertical
component of the momentum equation vanish identically, when tested against such a $\vec\psi$. Recall that all the integrals will be performed in $\R^2$. So, equation \eqref{weak-mom} reduces
to
\begin{align*}
\int_0^T\!\!\!\int_{\Omega}  \left( -\vre \ue \cdot \partial_t \vec\psi -\vre \ue\otimes\ue  : \nabla \vec\psi
+ \frac{1}{\ep}\vre\big(\ue^{h}\big)^\perp\cdot\vec\psi^h+\mbb{S}(\nabla_x\vec\ue): \nabla_x \vec\psi\right)
 =\int_{\Omega}\vrez \uez  \cdot \vec\psi(0,\cdot)\,.
\end{align*}

For the rotation term, we can test the first equation in \eqref{eq:wave_m=1} against $\phi$ to get
\begin{equation*} 
\begin{split}
-\int_0^T\!\!\!\int_{\R^2} \left( \lan \vrho^{(1)}_{\varepsilon}\ran\, \d_{t}\phi +\frac{1}{\veps}\, \lan\vrho_{\veps}\ue^{h}\ran\cdot \nabla_{h}\phi\right)=
\int_{\R^2}\lan \vrho^{(1)}_{0,\varepsilon }\ran\, \phi (0,\cdot ) \, ,
\end{split}
\end{equation*}
whence we deduce that
\begin{align*}
\int_0^T\!\!\!\int_{\Omega}\frac{1}{\ep}\vre\big(\ue^{h}\big)^\perp\cdot\vec\psi^h\,&=\,\int_0^T\!\!\!\int_{\mbb{R}^2}\frac{1}{\ep}\langle\vre \ue^{h}\rangle \cdot \nabla_{h}\phi\,=\,-\,\int_0^T\!\!\!\int_{\mbb{R}^2}\langle \vrho^{(1)}_\veps\rangle\, \d_t\phi\,-\,\int_{\mbb{R}^2}\langle \vrho^{(1)}_{0,\veps}\rangle\, \phi(0,\cdot )\,. 
\end{align*}

In addition, the convergence of the convective term has been performed in \eqref{eq:limit_T1-1}. As for other terms, we can argue as in Section \ref{ss:limit}.
Hence, letting $\varepsilon \rightarrow 0^+$ in the equation above, we get
\begin{align*}
&-\int_0^T\!\!\!\int_{\R^2} \left(\vec{U}^{h}\cdot \d_{t}\nabla_{h}^{\perp} \phi+ \vec{U}^{h}\otimes \vec{U}^{h}:\nabla_{h}(\nabla_{h}^{\perp}\phi )+\vrho^{(1)}\, \d_t \phi \right)\, dx^h\, dt\\
&\qquad\qquad=-\int_0^T\!\!\!\int_{\R^2} \mu \nabla_{h}\vec{U}^{h}:\nabla_{h}(\nabla_{h}^{\perp}\phi ) \, dx^h\, dt+\int_{\R^2}\left(\lan\vec{u}_{0}^{h}\ran\cdot \nabla _{h}^{\perp}\phi (0,\cdot )+
\lan \vrho^{(1)}_{0}\ran\phi (0,\cdot )\right) \, dx^h\, ,
\end{align*}
which is the weak formulation of equation \eqref{eq_lim:QG}. In the end, also Theorem \ref{th:m=1} is proved.

\appendix

\section{Appendix -- A few tools from Littlewood-Paley theory} \label{app:LP}

Let us present some tools from Littlewood-Paley theory, which we have exploited in our analysis.
We refer \tsl{e.g.} to Chapter 2 of \cite{B-C-D} for details.
For simplicity of exposition, we deal with the $\R^d$ case, with $d\geq1$; however, the whole construction can be adapted also to the $d$-dimensional torus $\TT^d$, and to the ``hybrid'' case $\R^{d_1}\times\TT^{d_2}$.

First of all, we introduce the so-called \emph{Littlewood-Paley decomposition}.
We fix a smooth radial function $\chi$ such that $\Supp\chi\subset B(0,2)$, $\chi\equiv 1$ in a neighborhood of $B(0,1)$
and the map $r\mapsto\chi(r\,e)$ is non-increasing over $\R_+$ for all unitary vectors $e\in\R^d$.
Set $\varphi\left(\xi\right)=\chi\left(\xi\right)-\chi\left(2\xi\right)$ and $\vphi_j(\xi):=\vphi(2^{-j}\xi)$ for all $j\geq0$.
The dyadic blocks $(\Delta_j)_{j\in\Z}$ are defined by\footnote{We agree  that  $f(D)$ stands for 
the pseudo-differential operator $u\mapsto\mc{F}^{-1}[f(\xi)\,\what u(\xi)]$.} 
$$
\Delta_j\,:=\,0\quad\mbox{ if }\; j\leq-2,\qquad\Delta_{-1}\,:=\,\chi(D)\qquad\mbox{ and }\qquad
\Delta_j\,:=\,\varphi(2^{-j}D)\quad \mbox{ if }\;  j\geq0\,.
$$
For any $j\geq0$ fixed, we  also introduce the \emph{low frequency cut-off operator}
\begin{equation} \label{eq:S_j}
S_j\,:=\,\chi(2^{-j}D)\,=\,\sum_{k\leq j-1}\Delta_{k}\,.
\end{equation}
Note that $S_j$ is a convolution operator. More precisely, after defining
$$
K_0\,:=\,\mc F^{-1}\chi\qquad\qquad\mbox{ and }\qquad\qquad K_j(x)\,:=\,\mathcal{F}^{-1}[\chi (2^{-j}\cdot)] (x) = 2^{jd}K_0(2^j x)\,,
$$
for all $j\in\N$ and all tempered distributions $u\in\mc S'$ we have that $S_ju\,=\,K_j\,*\,u$.
Thus the $L^1$ norm of $K_j$ is independent of $j\geq0$. This implies that $S_j$ maps continuously $L^p$ into itself, for any $1 \leq p \leq +\infty$.

Moreover, the following property holds true: for any $u\in\mc{S}'$, then one has the equality $u=\sum_{j}\Delta_ju$ in the sense of $\mc{S}'$.
Let us also recall the so-called \emph{Bernstein inequalities}.
  \begin{lemma} \label{l:bern}
Let  $0<r<R$.   A constant $C$ exists so that, for any non-negative integer $k$, any couple $(p,q)$ 
in $[1,+\infty]^2$, with  $p\leq q$,  and any function $u\in L^p$,  we  have, for all $\lambda>0$,
$$
\displaylines{
{\Supp}\, \widehat u \subset   B(0,\lambda R)\quad
\Longrightarrow\quad
\|\nabla^k u\|_{L^q}\, \leq\,
 C^{k+1}\,\lambda^{k+d\left(\frac{1}{p}-\frac{1}{q}\right)}\,\|u\|_{L^p}\;;\cr
{\Supp}\, \widehat u \subset \{\xi\in\R^d\,:\, \lambda r\leq|\xi|\leq \lambda R\}
\quad\Longrightarrow\quad C^{-k-1}\,\lambda^k\|u\|_{L^p}\,
\leq\,
\|\nabla^k u\|_{L^p}\,
\leq\,
C^{k+1} \, \lambda^k\|u\|_{L^p}\,.
}$$
\end{lemma}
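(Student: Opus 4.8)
The plan is to reduce both inequalities to norm estimates for explicit convolution operators, obtained by inserting auxiliary smooth cut-off functions that are identically $1$ on the frequency set supporting $\widehat u$, and then exploiting Fourier rescaling together with Young's convolution inequality.

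For the first inequality I would fix once and for all a function $\theta\in C^\infty_c(\R^d)$ with $\theta\equiv1$ on $B(0,R)$ and $\Supp\theta\subset B(0,2R)$. Since $\Supp\widehat u\subset B(0,\lambda R)$, one has $\widehat u(\xi)=\theta(\xi/\lambda)\,\widehat u(\xi)$, hence $u=g_\lambda*u$ with $g_\lambda(x):=\lambda^d\check\theta(\lambda x)$, where $\check\theta:=\mathcal F^{-1}\theta$. Differentiating, $\nabla^k u=(\nabla^k g_\lambda)*u$ and $\nabla^k g_\lambda(x)=\lambda^{d+k}(\nabla^k\check\theta)(\lambda x)$. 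Young's inequality with exponents satisfying $1+\tfrac1q=\tfrac1r+\tfrac1p$ (so that $1\le r\le\infty$, using $p\le q$) then gives
\[
\|\nabla^k u\|_{L^q}\le\|\nabla^k g_\lambda\|_{L^r}\,\|u\|_{L^p}=\lambda^{\,d+k-d/r}\,\|\nabla^k\check\theta\|_{L^r}\,\|u\|_{L^p},
\]
and a direct computation shows $d+k-d/r=k+d(\tfrac1p-\tfrac1q)$, which is the exponent of $\lambda$ claimed. It then remains to check that $\|\nabla^k\check\theta\|_{L^r}\le C^{k+1}$ with $C$ independent of $k$: this is routine, interpolating between the bound $\|\nabla^k\check\theta\|_{L^\infty}\le C^{k+1}$ (coming from the compact support of $\theta$) and an $L^1$ bound obtained by writing $(1+|x|^2)^d\nabla^k\check\theta=\mathcal F^{-1}\big[(1-\Delta_\xi)^d\big((i\xi)^k\theta\big)\big]$ and controlling the right-hand side by Leibniz's rule (the monomials produced by differentiating $(i\xi)^k$ contribute only factors $\le C^{k+1}$).

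The upper bound in the second inequality is the particular case $q=p$ of the first, applied with a cut-off $\theta$ that is $\equiv1$ on the annulus $\{r\le|\xi|\le R\}$ and supported in $\{r/2\le|\xi|\le2R\}$. For the lower bound I would instead invert the derivative at the level of Fourier multipliers: with the same $\theta$, and since the annulus $\{\lambda r\le|\xi|\le\lambda R\}$ stays away from the origin, one may write
\[
\widehat u(\xi)=\sum_{j=1}^d\frac{-i\xi_j}{|\xi|^2}\,\theta(\xi/\lambda)\,\widehat{\partial_j u}(\xi)=\frac1\lambda\sum_{j=1}^d\Psi_j(\xi/\lambda)\,\widehat{\partial_j u}(\xi),\qquad \Psi_j(\eta):=\frac{-i\eta_j}{|\eta|^2}\,\theta(\eta),
\]
each $\Psi_j$ being smooth and compactly supported away from $0$. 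Passing to physical space, $u=\tfrac1\lambda\sum_j\big(\lambda^d\check\Psi_j(\lambda\cdot)\big)*\partial_j u$, and Young's inequality yields $\|u\|_{L^p}\le C\lambda^{-1}\|\nabla u\|_{L^p}$ with $C=\sum_j\|\check\Psi_j\|_{L^1}$ independent of $\lambda$. Iterating this estimate $k$ times (each $\partial^\alpha u$ still has Fourier support in the same annulus) gives $\|u\|_{L^p}\le C^k\lambda^{-k}\|\nabla^k u\|_{L^p}\le C^{k+1}\lambda^{-k}\|\nabla^k u\|_{L^p}$, which is the claim.

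The only genuinely delicate point, and hence the main obstacle, is the bookkeeping needed to get constants of the precise exponential form $C^{k+1}$ rather than unspecified $k$-dependent constants; everything else is just Fourier rescaling combined with Young's inequality. One should also take care, in the second inequality, that the frequency annulus remains bounded away from the origin, so that $\xi\mapsto|\xi|^{-2}$ is smooth on the relevant set, and keep in mind that $\nabla^k u$ denotes the collection of derivatives $\partial^\alpha u$ with $|\alpha|=k$, the passage between these and the chosen norm of $\nabla^k u$ costing only a combinatorial constant that is again $\le C^k$.
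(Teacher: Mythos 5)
Your proof is correct. The paper itself does not prove this lemma: it is recalled in the appendix as a classical fact, with a reference to Chapter~2 of \cite{B-C-D}, and your argument (inserting a dilated cut-off equal to $1$ on the Fourier support, rescaling, applying Young's inequality with $1+\frac1q=\frac1r+\frac1p$, and for the reverse inequality inverting the gradient via the multipliers $-i\xi_j|\xi|^{-2}\theta(\xi/\lambda)$ on the annulus) is precisely the standard proof given there, including the bookkeeping that yields constants of the form $C^{k+1}$.
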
   

By use of Littlewood-Paley decomposition, we can define the class of Besov spaces.
\begin{definition} \label{d:B}
  Let $s\in\R$ and $1\leq p,r\leq+\infty$. The \emph{non-homogeneous Besov space}
$B^{s}_{p,r}$ is defined as the subset of tempered distributions $u$ for which
$$
\|u\|_{B^{s}_{p,r}}\,:=\,
\left\|\left(2^{js}\,\|\Delta_ju\|_{L^p}\right)_{j\geq -1}\right\|_{\ell^r}\,<\,+\infty\,.
$$
\end{definition}

Besov spaces are interpolation spaces between Sobolev spaces. In fact, for any $k\in\N$ and~$p\in[1,+\infty]$
we have the chain of continuous embeddings $ B^k_{p,1}\hookrightarrow W^{k,p}\hookrightarrow B^k_{p,\infty}$,
which, in the case when $1<p<+\infty$, can be refined to
$B^k_{p, \min (p, 2)}\hookrightarrow W^{k,p}\hookrightarrow B^k_{p, \max(p, 2)}$.
In particular, for all $s\in\R$ we deduce that $B^s_{2,2}\equiv H^s$, with equivalence of norms:
\begin{equation} \label{eq:LP-Sob}
\|f\|_{H^s}\,\sim\,\left(\sum_{j\geq-1}2^{2 j s}\,\|\Delta_jf\|^2_{L^2}\right)^{\!\!1/2}\,.
\end{equation}
Observe that, from that equivalence, we easily get the following property: for any $f\in H^s$ and any $j\in \N$, one has  
\begin{equation} \label{est:sobolev}
\left\|\big(\Id-S_j\big)f\right\|_{H^\s}\,\leq\,C\,\|\nabla f\|_{H^{s-1}}\,2^{-j(s-\s)} \qquad \text{ for all }\quad \s\leq s\,,
\end{equation}
where $C>0$ is a ``universal'' constant, independent of $f$, $j$, $s$ and $\s$. This inequality has been repeatedly used in our computations.

%

{\small

}

\end{document}